\tikzstyle{shaded}=[fill=red!10!blue!20!gray!30!white]
\tikzstyle{unshaded}=[fill=white]
\tikzstyle{empty box}=[circle, draw, thick, fill=white, opaque, inner sep=2mm]
\tikzstyle{annular}=[scale=.65, inner sep=1mm, baseline=-.1cm]
\tikzstyle{rectangular}=[scale=.75, inner sep=1mm, baseline=-.2cm]
\tikzstyle{vertex}=[circle, draw, inner sep=0pt, minimum size=5pt]
\tikzstyle{vertice}=[circle, draw, inner sep=0pt, minimum size=5pt, fill=black]
\newcommand{\vertex}{\node[vertex]}
\newcommand{\vertice}{\node[vertice]}
\newcommand{\vin}{\rotatebox[origin=c]{180}{$\overcrossing$}}
\begin{document}

\newtheorem*{theo}{Theorem}
\newtheorem*{pro} {Proposition}
\newtheorem*{cor} {Corollary}
\newtheorem*{lem} {Lemma}
\newtheorem{theorem}{Theorem}[section]
\newtheorem{corollary}[theorem]{Corollary}
\newtheorem{lemma}[theorem]{Lemma}
\newtheorem{proposition}[theorem]{Proposition}
\newtheorem{conjecture}[theorem]{Conjecture}
\newtheorem{question}{Question}

\theoremstyle{definition}
 \newtheorem{definition}[theorem]{Definition} 
  \newtheorem{example}[theorem]{Example}
   \newtheorem{remark}[theorem]{Remark}
   
\newcommand{\Naturali}{{\mathbb{N}}}
\newcommand{\Reali}{{\mathbb{R}}}
\newcommand{\Complessi}{{\mathbb{C}}}
\newcommand{\Toro}{{\mathbb{T}}}
\newcommand{\Relativi}{{\mathbb{Z}}}
\newcommand{\HH}{\mathfrak H}
\newcommand{\KK}{\mathfrak K}
\newcommand{\LL}{\mathfrak L}
\newcommand{\as}{\ast_{\sigma}}
\newcommand{\tn}{\vert\hspace{-.3mm}\vert\hspace{-.3mm}\vert}
\def\A{{\cal A}}
\def\B{{\cal B}}
\def\E{{\cal E}}
\def\F{{\cal F}}
\def\G{{\cal G}}
\def\H{{\cal H}}
\def\K{{\cal K}}
\def\L{{\cal L}}
\def\N{{\cal N}}
\def\M{{\mathcal M}}
\def\gM{{\frak M}}
\def\O{{\cal O}}
\def\P{{\cal P}}
\def\S{{\cal S}}
\def\T{{\mathcal T}}
\def\U{{\cal U}}
\def\V{{\mathcal V}}
\def\qed{\hfill$\square$}

\title{
The Jones polynomial
and functions of positive type  
on the oriented Jones-Thompson groups $\vec{F}$ and $\vec{T}$
}

\author{Valeriano Aiello, Roberto Conti\\}
\date{\today}
\maketitle
\markboth{V. Aiello, R. Conti}{
}
\renewcommand{\sectionmark}[1]{}
\begin{abstract}
The pioneering work of Jones and Kauffman unveiled 
a fruitful relationship between statistical mechanics and knot theory. Recently, Jones introduced two subgroups $\vec{F}$ and $\vec{T}$ of the Thompson groups $F$ and $T$, respectively, together with a procedure that associates an oriented link diagram to any element of these subgroups. 
Moreover, 
several specializations of some well-known polynomial link invariants can be seen as functions of positive type on the Thompson groups or the Jones-Thompson subgroups. 
One important example is provided by
suitable evaluations of the Jones polynomial, which are thus associated with certain unitary representations of the groups $\vec{F}$ and $\vec{T}$.  
Within this framework, we discuss an alternative approach that relies on some partition function interpretation of the Jones polynomial, and also exhibit more examples associated 
with other link invariants, notably the two-variable Kauffman polynomial and the HOMFLY polynomial.
In the unoriented case, extending our previous results, we also show by similar methods that certain evaluations of the Tutte polynomial and of the Kauffman bracket,
 suitably renormalized,
yield functions of positive type on $T$. 

\vskip 0.9cm
\noindent {\bf MSC 2010}: 43A35, 57M27, 05C31.

\smallskip
\noindent {\bf Keywords}: 
Thompson group, function of positive type, trees, knots, links, Jones polynomial, HOMFLY polynomial,  $2$-variable Kauffman polynomial, Kauffman bracket, chromatic polynomial, Tutte polynomial, rank polynomial, link colourings, Fox colourings, 
partition function.
\end{abstract}

\newpage

\section{Introduction}

In \cite{Jo} it is introduced the oriented subgroup $\vec{F}$ of R. Thompson group $F$ consisting of those elements for which the associated $\Gamma$-graph (as described therein) is bipartite. In turn, this is also the subgroup of $F$ of elements for which there exists a canonical orientation on the associated link diagram $\L(g)$ by means of Seifert surfaces (as explained in \cite{Jo}, $\L(g)$ is uniquely defined up to distant unknots arising from opposite carets). It is worth to mention that $\vec{F}$ is isomorphic to the Thompson group $F_3$ defined using triadic subdivisions \cite{Golan}.
In the sequel we will denote the oriented link diagram associated to $g \in \vec{F}$ by $\vec{\L}(g)$.
Therefore, it is meaningful to compute invariants of oriented links on the elements of $\vec{F}$, after a suitable normalization to get rid of the distant unknots.
We thus introduce the Jones function $V_g(t)$, that is equal to the Jones polynomial $V_{\vec{\L}(g)}(t)$ up to a multiplicative factor ensuring its 
well-definiteness as a function on $\vec{F}$.
This path was of course initiated by Jones in \cite{Jo}, where he showed that many specializations of some 
familiar polynomial invariants, including the Jones polynomial and the Kauffman bracket, can be seen as coefficients of unitary representations of the relevant groups involved.

In \cite{AiCo1} we gave a somehow different proof of the fact that the renormalized Kauffman bracket, agreeing with $\langle \L(g) \rangle (A)$ up to a suitable factor, provides a function of positive type on $F$ for certain special values of the variable $A$, which actually form a finite set of roots of unity.
In this paper, by using the  close relationship between the Kauffman bracket and the Jones polynomial, we present an alternative proof of the fact that also the Jones function $V_g(t)$ gives rise to functions of positive type on $\vec{F}$, at least for some specific values of the variable $t$.
A  bit unexpectedly, the evaluations selected in this way correspond to specific values of the Jones polynomial that have already attracted some interest in the literature, mainly because they admit a geometrical interpretation.
Moreover, proceeding along similar lines, 
we are able to extend the arguments to cover also some specializations of the (renormalized) two-variable Kauffman polynomial and (some functions associated to) the Homfly polynomial. 

A major ingredient in the proof of these results is the possibility to represent the invariants as partition functions of suitably defined statistical mechanical models.
We believe this approach has the merit of being accessible to a wide, non-specialized audience, the drawback being that the relevant specializations of the variables 
obtained in this way are far from being optimal (see \cite{ACJ} for detailed statements).
As a byproduct of this analysis, we also make a number of observations concerning the specific topological properties (writhe, number of components) of the links obtained with the Jones procedure that might be of independent interest.

It is also natural to investigate analogous problems for the larger groups $\vec{T} \subset T$.
We show that all the results about the Tutte polynomial and the Kauffman bracket obtained in \cite{AiCo1} remain true without modifications 
for $T$, as well as those 
about the Jones polynomial for $\vec{T}$.
A further and more general 
analysis regarding oriented link invariants giving rise to functions of positive type on $\vec{F}$ is pursued in 
\cite{ACJ} by using the recent machinery developed in \cite{Jo-No}.

In a continuation of this project, 
one might also 
determine which evaluations of the Jones function 
(or of some 
closely related function) belong to the linear span of the
functions of positive type, namely they lie in the Fourier-Stieltjes algebra $B(\vec{F})$, or even in some other space of multipliers.
All in all, one might wonder whether this circle of ideas from low-dimensional topology may provide any valuable information on the various multiplier spaces of $\vec{F}$ (and $F$). We 
plan to return to these issues elsewhere.

\section{Preliminaries}
This section is devoted to introducing the definition of the Jones-Thompson group $\vec{F}$ and recalling the fundamental properties of some polynomial invariants for oriented links. In the first place (see \cite{Cannon}) we recall that the group $T$ can be defined as the group generated by two families of elements $\{x_n\}_{n\geq 0}$ and $\{c_n\}_{n\geq 0}$ such that
\begin{enumerate} 
\item $x_j x_i=x_ix_{j+1}$ if $i<j$ 
\item $x_k c_{n+1}=c_n x_{k+1}$ if $k<n$  
\item $c_n x_0 = c_{n+1}^2$ 
\item $c_n = x_n c_{n+1}$ 
\item $c_n^{n+2} =1$ . 
\end{enumerate}
The group $F$ is the subgroup of $T$ generated by the $\{x_n\}_{n\geq 0}$.

In \cite[Section 4]{Jo}, Jones associates to any element $g \in T$, represented by a pair of bifurcating trees without cancelling carets, a certain graph $\Gamma(g)$. The Jones-Thompson group can then be defined as follows:
$$
\vec{F}:=\{g \in F \ | {\rm Chr}_{\Gamma(g)}(2) = 2 \} \ , 
$$
where ${\rm Chr}_\Gamma$ denotes the chromatic polynomial of the graph $\Gamma$.
Golan and Sapir proved that this group is finitely generated exhibiting some generators (\cite{Golan}, Lemma 4.6), namely $x_0x_1$, $x_1x_2$ and $x_2x_3$. Moreover, using also the fact that these elements satisfy the defining relations of the generators of $F_3$ they actually showed that indeed $\vec{F}$ is isomorphic to the Thompson group $F_3$ \cite[Lemma 4.7]{Golan}. 

Using $\Gamma(g)$, Jones (\cite{Jo}) discovered a way to  associate an oriented link to any element of the Jones-Thompson group. We briefly outline this procedure in the following example.

\begin{example}
Being $\vec{F}$ a subgroup of $F$, any element of $\vec{F}$ can be represented by a pair of trees.  
Consider the following element of $\vec{F}$, the associated $\Gamma$-graph and the corresponding medial graph
\[\begin{tikzpicture}[x=0.5cm, y=0.5cm,
    every edge/.style={
        draw,
      postaction={decorate,
                    decoration={markings}
                   }
        }
]
	\vertex (a) at (0,0) {};
	\vertex (b) at (1,0) {};
	\vertex (c) at (2,0) {};
	\vertex (d) at (3,0) {};
	\vertex (e) at (4,0) {};
	\vertex (f) at (5,0) {};
	\vertex (g) at (6,0) {};
	\vertex (h) at (7,0) {};
	\vertex (i) at (5.5,0.5) {};
	\vertex (l) at (5,1) {};;
	\vertex (m) at (5.5,1.5) {};
	\vertex (n) at (2.5,0.5) {};
	\vertex (o) at (2,1) {};
	\vertex (p) at (4,3) {};
	\vertex (q) at (3.5,3.5) {};
	\vertex (r) at (1.5,-0.5) {};
	\vertex (s) at (1,-1) {};
	\vertex (t) at (4,-1) {};
	\vertex (u) at (4.5,-0.5) {};
	\vertex (v) at (4.5,-1.5) {};
	\vertex (w) at (3,-3) {};
	\vertex (z) at (3.5,-3.5) {};
	\path
		(a) edge (q)
		(b) edge (o)
		(o) edge (p)
		(p) edge (q)
		(c) edge (n)
		(n) edge (o)
		(d) edge (n)
		(e) edge (l)
		(l) edge (m)
		(m) edge (p)
		(f) edge (i)
		(i) edge (l)
		(g) edge (i)
		(h) edge (m)
		(a) edge (s)
		(s) edge (w)
		(w) edge (z)
		(b) edge (r)
		(r) edge (s)
		(c) edge (r)
		(d) edge (t)
		(t) edge (v)
		(v) edge (w)
		(e) edge (u)
		(u) edge (t)
		(f) edge (u)
		(g) edge (v)
		(h) edge (z);
	\vertex (a) at (9,0) {};
	\vertex (b) at (10,0) {};
	\vertex (c) at (11,0) {};
	\vertex (d) at (12,0) {};
	\vertex (e) at (13,0) {};
	\vertex (f) at (14,0) {};
	\vertex (g) at (15,0) {};
	\vertex (h) at (16,0) {};
	\path
		(a) edge[bend left=50] (b)
		(b) edge[bend left=50] (c)
		(b) edge[bend left=50] (e)
		(c) edge[bend left=50] (d)
		(e) edge[bend left=50] (f)
		(f) edge[bend left=50] (g)
		(e) edge[bend left=50] (h)
		(a) edge[bend right=50] (b)
		(b) edge[bend right=50] (c)
		(a) edge[bend right=50] (d)
		(d) edge[bend right=50] (e)
		(e) edge[bend right=50] (f)
		(d) edge[bend right=50] (g)
		(a) edge[bend right=50] (h);
\end{tikzpicture}
\quad
\begin{tikzpicture}[x=0.5cm, y=0.5cm,
    every edge/.style={
        draw,
      postaction={decorate,
                    decoration={markings}
                   }
        }
]
	\vertex (a) at (0,0) {};
	\vertex (b) at (1,0) {};
	\vertex (c) at (2,0) {};
	\vertex (d) at (2.5,1) {};
	\vertex (e) at (4,0) {};
	\vertex (f) at (5,0) {};
	\vertex (g) at (6,1) {};
	\vertex (h) at (0,-1) {};
	\vertex (i) at (1,-1) {};
	\vertex (l) at (2,-2) {};
	\vertex (m) at (3,-1) {};
	\vertex (n) at (4,-1) {};
	\vertex (o) at (5,-2) {};
	\vertex (p) at (5.5,-3.5) {};
		
	\path
		(a) edge[bend left=40] (h)
		(a) edge (h)
		(b) edge[bend left=40] (i)
		(b) edge[bend right=40] (i)
		(e) edge[bend left=40] (n)
		(n) edge[bend left=40] (e)
		(b) edge[bend left=40] (c)
		(i) edge[bend right=40] (c)
		(h) edge[bend right=20] (i)
		(h) edge[bend right=10] (l)
		(l) edge (c)
		(a) edge[bend left=20] (d)
		(b) edge[bend left=20] (d)
		(c) edge[bend left=20] (m)
		(d) edge[bend left=20] (m)
		(m) edge (n) 
		(m) edge (o)
		(n) edge[bend right=40] (f)
		(e) edge[bend left=40] (f)
		(f) edge (o)
		(o) edge[bend right=40] (f)
		(g) edge[bend left=20] (p)
		(p) edge[bend right=40] (g)
		(d) edge[bend left=40] (g)
		(e) edge[bend left=40] (g)
		(l) edge (o)
		(l) edge (p)
		(a) edge[bend left=270] (p)
;
\end{tikzpicture}\]
Indeed, this is an element of $\vec{F}$ as it can be easily seen that the associated $\Gamma$-graph is bipartite.
The corresponding 
link diagram, depicted below, can be recognized to represent an oriented Hopf link.
\[\begin{tikzpicture}[every path/.style={thick}, every node/.style={transform shape, knot crossing, inner sep=1.5pt}]
	\node (a) at (0,0) {};
	\node (b) at (1,0) {};
	\node (c) at (2,0) {};
	\node (d) at (2.5,1) {};
	\node (e) at (4,0) {};
	\node (f) at (5,0) {};
	\node (g) at (6,1) {};
	\node (h) at (0,-1) {};
	\node (i) at (1,-1) {};
	\node (l) at (2,-1.5) {};
	\node (m) at (3,-1) {};
	\node (n) at (4,-1) {};
	\node (o) at (5,-2) {};
	\node (p) at (5.5,-3) {};
	\node (x) at (0,-1.5) {};

\draw[->] (h.center) .. controls (h.4 north west) and (a.4 south west) ..  (a.center);
\draw (a.center) .. controls (a.4 north east) and (d.4 north west) ..  (d);
\draw (d) .. controls (d.4 south east) and (m.4 north east) ..  (m);
\draw (m) .. controls (m.4 south west) and (o.4 north west) ..  (o.center);
\draw (o.center) .. controls (o.16 south east) and (f.16 north east) ..  (f.center);
\draw (f.center) .. controls (f.4 south west) and (n.8 south east) ..  (n.center);
\draw (n.center) .. controls (n.4 north west) and (e.4 south west) ..  (e.center);
\draw (e.center) .. controls (e.4 north east) and (f.4 north west) ..  (f);
\draw (f) .. controls (f.4 south east) and (o.4 north east) ..  (o);
\draw (o) .. controls (o.4 south west) and (l.4 south east) ..  (l.center);
\draw (l.center) .. controls (l.4 north west) and (h.8 south west) ..  (h);
\draw (h) .. controls (h.4 north east) and (a.4 south east) ..  (a);
\draw (a) .. controls (a.8 north west) and (x.8 north west) .. (x.center); 
\draw (x.center) .. controls (x.4 south east) and (p.16 south west) ..  (p);
\draw (p) .. controls (p.4 north east) and (g.4 south east) ..  (g);
\draw (g) .. controls (g.4 north west) and (d.4 north east) ..  (d.center);
\draw (d.center) .. controls (d.4 south west) and (b.4 north west) ..  (b);
\draw (b) .. controls (b.4 south east) and (i.4 north east) ..  (i);
\draw (i) .. controls (i.4 south west) and (h.4 south east) ..  (h.center);
\draw (i.center) .. controls (i.4 north west) and (b.4 south west) ..  (b.center);
\draw (b.center) .. controls (b.4 north east) and (c.4 north west) ..  (c);
\draw (c) .. controls (c.4 south east) and (l.4 north east) ..  (l);
\draw (l) .. controls (l.4 south west) and (p.4 north west) ..  (p.center);
\draw (p.center) .. controls (p.16 south east) and (g.16 north east) ..  (g.center); jkjhkjhkjh
\draw (g.center) .. controls (g.4 south west) and (e.4 north west) ..  (e);
\draw[<-] (e) .. controls (e.4 south east) and (n.4 north east) ..  (n);
\draw (n) .. controls (n.4 south west) and (m.4 south east) ..  (m.center);
\draw (m.center) .. controls (m.4 north west) and (c.4 north east) ..  (c.center);
\draw (c.center) .. controls (c.4 south west) and (i.4 south east) ..  (i.center);
%
%
%
%
%
%
%
%
%
%
\end{tikzpicture}\] 
\end{example}

Now we consider some polynomial invariants of oriented links. The Homfly polynomial and Jones polynomial are Laurent polynomial satisfying the so-called skein relations, namely 
$$
		 \left\{\begin{array}{l}
 		t^{-1}V_{\overcrossing}(t)-tV_{\undercrossing}(t)=(t^{1/2}-t^{-1/2})V_{\upuparrows}\\
 		\alpha P_{\overcrossing}(\alpha,z)-\alpha^{-1}P_{\undercrossing}(\alpha,z)-z P_{\upuparrows}(\alpha,z)=0.
 		\end{array}
 		\right.	
$$
The $2$-variable Kauffman polynomial, the Homfly polynomial and the Jones polynomial satisfy the following properties
\begin{align*}
 & \left\{\begin{array}{l}
 	F_{\vec{L}_1\cup \vec{L}_2}= [(a^{-1}+a)x^{-1}-1]F_{\vec{L}_1}F_{\vec{L}_2}\\
F_O= 1\\
 		\end{array}
 		\right. \\
& \left\{\begin{array}{l}
 		P_{\vec{L_1}\cup \vec{L_2}}=\frac{(\alpha-\alpha^{-1})}{z} P_{\vec{L_1}}P_{\vec{L_2}} ,\\
P_O= 1\\
 		\end{array}
 		\right. \\
 & \left\{\begin{array}{l}
V_{\vec{L}\cup O}=(-t^{-1/2}-t^{1/2})V_{\vec{L}},\\
V_O= 1. 		
\end{array}
 		\right.
\end{align*}
For $g \in \vec{F}$, represented by a pair of trees $(T_+,T_-)$ with $n$ leaves,
we define the $2$-variables Kauffman function, the Homfly function and the Jones function, respectively, by
\begin{eqnarray*}
F_g(a,x) & := & F_{\vec{\L}(T_+,T_-)}(a,x) [(a^{-1}+a)x^{-1}-1]^{-n+1} \ , \\
P_g(\alpha,z) & := & P_{\vec{\L}(T_+,T_-)}(\alpha,z) [(\alpha-\alpha^{-1})/z]^{-n+1} ,\\
V_g(t) & := & V_{\vec{\L}(T_+,T_-)}(t)(-t^{-1/2}-t^{1/2})^{-n+1} \ . 
\end{eqnarray*}
Therefore, the maps $g \mapsto F_g(a,x)$, $g\mapsto P_g(\alpha,z)$ and $g \mapsto V_g(t)$ are well-defined on $\vec{F}$ for all the values of the involved parameters.

In the sequel, we will repeatedly use the well-known fact that a matrix $A\in M_r(\mathbb{C})$ is positive if and only if there exist a Hilbert space $H$ and vectors $v_1,\ldots, v_r\in H$ such that $(a_{ij})=(\langle v_i,v_j\rangle)$, see e.g. \cite{Bhatia}, p. 2.

\section{On the Jones polynomial as a function on $\vec{F}$ }

Building upon our previous work \cite{AiCo1}, 
we now focus on the problem of 
finding some specializations of the variable for which the Jones polynomial 
gives rise to a positive type functions on $\vec{F}$. 
A first result in this direction comes from the notion of 3-colourings (see \cite{encycl}, p. 162).


We proved in \cite{AiCo1} that the number of $n$-colourings ${\rm Col}_n$ defines a positive definite function on $F$.
Moreover, it has been observed in \cite[Theorem 1.13]{P} that 
$${\rm Col}_3(L) = 3 \left | V_{\vec{L}}(e^{i \pi /3}) \right |^2 = 3 |F_{\vec{L}}(1,-1)| \ , $$
where $F_{\vec{L}}(a,x)$ denotes the two-variable Kauffman polynomial.
Therefore, we immediately obtain the following result.

\begin{proposition}\label{threecol}
The functions $| V_g (e^{\pi i/3})|^2$ and $|F_{g}(1,-1)|$ are of positive type on $F$. In particular they are of positive type on $\vec{F}$.
\end{proposition}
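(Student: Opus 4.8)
The plan is to reduce both claims to the positivity of the (renormalized) $3$-colouring function established in \cite{AiCo1}, using the identity ${\rm Col}_3(L) = 3|V_{\vec{L}}(e^{i\pi/3})|^2 = 3|F_{\vec{L}}(1,-1)|$ recalled above. The whole argument amounts to matching the normalization factors built into $V_g$ and $F_g$ against the renormalization that makes ${\rm Col}_3$ well defined on $F$; once this bookkeeping is in place, positivity transfers for free.

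First I would evaluate the two normalization factors at the relevant points. At $t=e^{i\pi/3}$ one has $-t^{-1/2}-t^{1/2}=-2\cos(\pi/6)=-\sqrt{3}$, and at $(a,x)=(1,-1)$ one has $(a^{-1}+a)x^{-1}-1=-3$; both are real, so passing to moduli is harmless. Writing $n$ for the number of leaves of a tree pair representing $g$, the definitions of $V_g$ and $F_g$ then give $|V_g(e^{i\pi/3})|^2=3^{1-n}\,|V_{\vec{L}(g)}(e^{i\pi/3})|^2$ and $|F_g(1,-1)|=3^{1-n}\,|F_{\vec{L}(g)}(1,-1)|$.

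Next I would substitute the identity of \cite{P}, which yields $|V_g(e^{i\pi/3})|^2=|F_g(1,-1)|=3^{-n}\,{\rm Col}_3(L(g))$. The right-hand side is manifestly invariant under adding a common caret to the tree pair (this multiplies ${\rm Col}_3$ by $3$ and $3^{-n}$ by $3^{-1}$), so it is a genuinely well-defined function on $F$, coinciding up to a fixed positive constant with the renormalized colouring function of \cite{AiCo1}. In particular both expressions depend only on the \emph{unoriented} link $L(g)$, which is why they make sense for every $g\in F$ and not merely on $\vec{F}$.

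Finally, since a nonnegative scalar multiple of a function of positive type is again of positive type, the positivity of the renormalized ${\rm Col}_3$ proved in \cite{AiCo1} immediately gives that $|V_g(e^{i\pi/3})|^2$ and $|F_g(1,-1)|$ are of positive type on $F$; restricting the test elements to the subgroup $\vec{F}$ yields the corresponding statement there. The only point that genuinely requires care — and the closest thing to an obstacle — is the normalization reconciliation in the preceding paragraph: one must verify that the leaf-count factors in the definitions of $V_g$ and $F_g$ combine with the factor $3$ in the identity of \cite{P} to reproduce exactly (a positive scalar times) the normalization used in \cite{AiCo1}. No new positivity argument is needed beyond that already available from \cite{AiCo1}.
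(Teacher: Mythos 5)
Your proposal is correct and follows exactly the paper's own route: the result is deduced immediately from the positivity of the $3$-colouring function on $F$ established in \cite{AiCo1} together with Przytycki's identity ${\rm Col}_3(L)=3|V_{\vec{L}}(e^{i\pi/3})|^2=3|F_{\vec{L}}(1,-1)|$. Your explicit bookkeeping of the normalization factors ($-\sqrt{3}$ and $-3$ at the respective evaluation points) and the observation that the resulting quantity depends only on the unoriented link are exactly the points the paper leaves implicit or relegates to the remark following the proposition.
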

Notice that in the previous statement we have considered $| V_g (e^{\pi i/3})|^2$ with $g \in F$ since, for $t$ a complex number of modulus one,
$|V_{\vec{L}}(t)|$ is actually independent of the orientation of the components of the link $L$ (see e.g. \cite[p.26]{Lic}), as well as $|F_{\vec{L}}(1,-1)|$.

\medskip

It is well-known that for an oriented link $\vec{L}$, $V_{\vec{L}}(t)$ coincides with $(-A^3)^{-\textrm{wr}(\vec{L})} \langle L \rangle (A)$ for $t^{-1/4} = A$. 
We proved in \cite{AiCo1} that the Kauffman bracket function is of positive type on $F$ when evaluated at certain roots of unity. 
In order to examine the issue whether the Jones polynomial gives rise to a function of positive type, one runs into the corresponding problem for the factor 
$(-A^3)^{-\textrm{wr}(\vec{L})}$, where $\textrm{wr}$ is the writhe number. 
At first sight, it may seem unlikely that this function is of positive type for all values of $A$. However, this function is trivially of positive type when $A=-1$ (see Remark \ref{tequal1}), so it is certainly meaningful to ask for which values of $A$ it is actually of positive type.
Given an element $g\in \vec{F}$, consider the family of links associated to $g$. All these links differ by the addition of distant unknots. The writhe computed on any of these links assumes always the same value. Therefore, we have a (well defined) writhe function $\textrm{wr}(g):= \textrm{wr}(\vec{\L}(g))$, where $\vec{\L}(g)$ denotes any oriented link associated to $g\in\vec{F}$, and it makes sense to consider the function $(-A^3)^{-\textrm{wr}(\vec{\L}(g))}$ on $\vec{F}$, such that
$$V_g(t) = (-A^3)^{-\textrm{wr}(g)} \langle g \rangle (A), \quad g \in \vec{F}, \ t^{-1/4} = A \  , $$
where the Kauffman bracket function is defined as\footnote{Note that in \cite{AiCo1} it is used a slightly different 
but harmless 
normalization for the Kauffman function.}  $\langle g \rangle (A) = \langle \L(g) \rangle (-A^2-A^{-2})^{-n+1}$.
We will also need the following facts, that will be used hereafter without further mention.
\begin{remark} \label{Jones-Laur}
We stress that the Jones polynomial takes values in the ring $\mathbb{Z}[t^{\pm 1/4}]$ and the symbol $t$ actually stands for $(t^{1/4})^4$. Moreover it can be proved that actually $V_{\vec{L}}(t)\in \mathbb{Z}[t^{\pm 1/2}]$, so when one wants to evaluate the Jones polynomial at a specific point, it is necessary to specify the value of $t^{1/2}\equiv A^{-2}$, not just of $t$. As a matter of fact, the Jones polynomial of an oriented link is a Laurent polynomial in $t$ if the number of components is odd, while it is the product of such a polynomial with $\sqrt t$ otherwise.
\end{remark}

We are now ready to show that the writhe function is a homomorphism ${\rm wr}: \vec{F}\to\mathbb{Z}$. Similarly to \cite{AiCo1}, given $g=g(T_+,T_-)\in \vec{F}$, the associated oriented link $\vec{\L}(g(T_+,T_-))$ may be decomposed in the two upper and lower halves, that is $\vec{\L}(g(T_+,T_-))=(\vec{\L}_+(T_+),\vec{\L}_-(T_-))$, hereafter called oriented \emph{semi-links}. 
We thus have ${\rm wr}(\vec{\L}(g))={\rm wr}(\vec{\L}_+(T_+))+{\rm wr}(\vec{\L}_-(T_-))$.

\begin{proposition}\label{wr-hom-F}
For any $g, h \in\vec{F}$, we have that ${\rm wr}(gh)={\rm wr}(g)+{\rm wr}(h)$.
\end{proposition}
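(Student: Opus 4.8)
The plan is to exploit the decomposition of the oriented link into upper and lower semi-links, already recorded just before the statement, together with the standard way elements of the Thompson group are multiplied via trees. The key structural fact I would rely on is that for $g = g(T_+, T_-)$ the writhe splits additively as ${\rm wr}(\vec{L}(g)) = {\rm wr}(\vec{L}_+(T_+)) + {\rm wr}(\vec{L}_-(T_-))$, where the two summands depend only on the top tree and the bottom tree separately. My first step would be to argue that each of these two quantities is, up to sign, the \emph{same} function of a single tree: the lower semi-link $\vec{L}_-(T_-)$ is the mirror/reflection of the upper semi-link built from the same tree, so there is a single tree-functional $w(T) := {\rm wr}(\vec{L}_+(T))$ with ${\rm wr}(\vec{L}_-(T)) = -w(T)$, giving ${\rm wr}(g(T_+,T_-)) = w(T_+) - w(T_-)$.

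Granting this, the proof reduces to a purely combinatorial identity. I would recall that the product $gh$ in $\vec{F}$ is computed by taking representatives $g = g(T_+, T_-)$ and $h = h(S_+, S_-)$, finding a common refinement so that the bottom tree of $g$ agrees with the top tree of $h$ (i.e.\ passing to representatives $g = g(T_+', R)$ and $h = h(R, S_-')$ with the same middle tree $R$), and then forming $gh = g(T_+', S_-')$ by cancelling the matched middle. Under the formula ${\rm wr} = w(\cdot) - w(\cdot)$, we then compute directly that ${\rm wr}(gh) = w(T_+') - w(S_-')$, while ${\rm wr}(g) + {\rm wr}(h) = \big(w(T_+') - w(R)\big) + \big(w(R) - w(S_-')\big) = w(T_+') - w(S_-')$, and the two coincide. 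So the additivity is precisely the telescoping of the middle-tree contributions.

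For this telescoping to be legitimate I need the crucial invariance property: the value $w(T)$ attached to the top tree must be independent of how a given element is subdivided, i.e.\ adding a caret consistently to both $T_+$ and $T_-$ must not change ${\rm wr}(g)$. This is exactly the well-definedness of the writhe function on $\vec{F}$ that the excerpt has already established (the writhe is constant across the family of links differing by distant unknots, equivalently across all tree-pair representatives of a fixed group element). I would therefore fold this stability into the argument: passing to the common refinement $R$ changes both $T_+ \to T_+'$ and the matched middle in a way that alters $w$ consistently, and the already-proven invariance guarantees ${\rm wr}(g) = w(T_+') - w(R)$ holds for the refined representative as well.

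The step I expect to be the main obstacle is verifying cleanly that the semi-link writhe is genuinely a function of the single tree alone and that its dependence is captured additively under caret insertion — i.e.\ checking that the crossings created in $\vec{L}_+$ near a newly inserted caret contribute in a way compatible with the telescoping, rather than interacting with distant parts of the diagram. In other words, the real content is localizing the writhe contribution to the caret structure so that refinement to a common middle tree behaves additively; once that localization is in hand, the homomorphism property is the short telescoping computation above. I would handle this either by a direct local analysis of the crossings introduced by a caret (confirming each added caret contributes a fixed writhe independent of context) or, more economically, by invoking the already-established well-definedness of ${\rm wr}$ on $\vec{F}$ to bypass the local crossing count entirely and argue purely at the level of the tree-functional $w$.
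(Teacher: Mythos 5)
Your proposal is correct and follows essentially the same route as the paper: decompose the writhe over the upper and lower semi-links, use the mirror-image relation $L(T_-,T_+)=L(T_+,T_-)^*$ (with the observation that the orientation discrepancy between $\vec{L}_+(T)$ and $\vec{L}_-(T)$ does not affect the writhe) to get the sign flip ${\rm wr}(\vec{L}_-(T))=-{\rm wr}(\vec{L}_+(T))$, and then telescope over the common middle tree. The paper presents exactly this cancellation, grounding the semi-link orientations in the canonical $\pm$ colouring of the $\Gamma$-graph, and like you it relies on the previously established well-definedness of ${\rm wr}$ on $\vec{F}$ to justify passing to a common refinement.
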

\begin{proof}
We begin recalling the procedure used by Jones to endow the Seifert surface associated to $\vec{\L}(g)$ with a canonical orientation. Each element of $\vec{F}$ has associated $\Gamma$-graph that admits two $2$-colourings by the two colours $\pm$. By convention we choose the colouring such that the first vertex on the left has colour $+$. We  consider the shading of a link diagram  associated to $g$ (as usual the outer region is white).  
The surface can then be obtained by gluing the shaded regions with suitably twisted strips. Indeed any such region corresponds to a vertex of the
$\Gamma$-graph, and its orientation is determined by the colour assigned to the vertex, namely counterclockwise if the colour is $+$ and clockwise
in the other case. In particular, it follows that the orientation of the boundary strands of the upper semi-link is determined by the colouring via the rule
$+ =  \downuparrows$, $- = \updownarrows$. 

Therefore, if we now consider $g=g(T_+,T)$ and $h=g(T,T_-)$ then $gh = g(T_+,T_-)$  and it is easy to see that
\begin{eqnarray*}
{\rm wr}(\vec{\L}(g(T_+,T))) & = & {\rm wr}(\vec{\L}_+(T_+))+{\rm wr}(\vec{\L}_-(T))\\
{\rm wr}(\vec{\L}(g(T,T_-))) & = & {\rm wr}(\vec{\L}_+(T))+{\rm wr}(\vec{\L}_-(T_-))\\
& = & -{\rm wr}(\vec{\L}_-(T))+{\rm wr}(\vec{\L}_-(T_-))
\end{eqnarray*}
where the negative sign of the summand  in the last formula is due to the passage to the mirror image (cf. \cite[Prop. 2.4]{AiCo1}
; notice that the different orientation of the semilinks $\vec{\L}_+(T)$ and $\vec{\L}_-(T)$ does not affect the writhe number).
Thus we get 
$$\textrm{wr}(gh)= {\rm wr}(\vec{\L}_+(T_+))+{\rm wr}(\vec{\L}_-(T_-))=\textrm{wr}(g)+\textrm{wr}(h).$$
\end{proof}

\begin{example} \label{ese-x0x1}
As already mentioned, the Jones-Thompson group $\vec{F}$ is generated by three elements $x_0x_1$, $x_1x_2$ and $x_2x_3$. Here are a pair of trees representing $x_0x_1$, the associated $\Gamma$-graph and medial graph
\[\begin{tikzpicture}[x=0.6cm, y=0.6cm,
    every edge/.style={
        draw,
      postaction={decorate,
                    decoration={markings}
                   }
        }
]
	\vertex (a) at (0,0) {};
	\vertex (b) at (1,0) {};
	\vertex (c) at (2,0) {};
	\vertex (d) at (3,0) {};
	\vertex (e) at (1.5,0.5) {};
	\vertex (f) at (1,1) {};
	\vertex (g) at (1.5,1.5) {};
	\vertex (h) at (2.5,-0.5) {};
	\vertex (i) at (2,-1) {};
	\vertex (l) at (1.5,-1.5) {};

	\path
		(a) edge (f)
		(b) edge (e)
		(c) edge (e)
		(e) edge (f)
		(g) edge (f)
		(d) edge (g)
		(a) edge (l)
		(b) edge (i)
		(c) edge (h)
		(d) edge (h)
		(h) edge (i)
		(i) edge (l);
\end{tikzpicture}
\quad
\begin{tikzpicture}[x=0.6cm, y=0.6cm,
    every edge/.style={
        draw,
        postaction={decorate,
                    decoration={markings,mark=at position 0.5 with {\arrow{>}}}
                   }
        }
]
	\vertex (a) at (0,0) {};
	\vertex (b) at (1,0) {};
	\vertex (c) at (2,0) {};
	\vertex (d) at (3,0) {};
	\path
		(a) edge[bend left=50] (b)
		(b) edge[bend left=50] (c)
		(a) edge[bend left=50] (d)
		(a) edge[bend right=50] (b)
		(b) edge[bend right=50] (c)
		(c) edge[bend right=50] (d)
;
\end{tikzpicture}
\quad
\begin{tikzpicture}[x=0.6cm, y=0.6cm,
    every edge/.style={
        draw,
      postaction={decorate,
                    decoration={markings}
                   }
        }
]
	\vertex (a) at (0,0) {};
	\vertex (b) at (1,0) {};
	\vertex (c) at (2,0) {};
	\vertex (d) at (0,1) {};
	\vertex (e) at (1,1) {};
	\vertex (f) at (1.50,2) {};
	\path
		(a) edge[bend left=40] (d)
		(a) edge[bend right=40] (d)
		(b) edge[bend left=40] (e)
		(b) edge[bend right=40] (e)
		(a) edge[bend right=40] (b)
		(b) edge[bend right=40] (c)
		(d) edge[bend left=40] (e)
		(e) edge[bend left=10] (c)
		(c) edge (f)
		(d) edge[bend left=20] (f)
		(c) edge[bend right=40] (f)
		(a) edge[bend right=270] (f)
;
\end{tikzpicture}
\]
The associated link diagram is shown below, representing a two-component trivial link.
\[\begin{tikzpicture}[every path/.style={thick}, every node/.style={transform shape, knot crossing, inner sep=1.5pt}]
\node (a) at (0,0) {};
\node (b) at (1,0) {};
\node (c) at (2,0) {};
\node (d) at (0,1) {};
\node (e) at (1,1) {};
\node (f) at (2,2) {};
\node (x) at (-0.25,1.25) {};

\draw (a.center) .. controls (a.4 south east) and (b.4 south west) ..  (b);
\draw[->] (a.center) .. controls (a.4 north west) and (d.4 south west) ..  (d.center);
\draw[->] (a) .. controls (a.4 north east) and (d.8 south east) ..  (d);

\draw (b.center) .. controls (b.4 north west) and (e.4 south west) ..  (e.center);
\draw (b) .. controls (b.4 north east) and (e.4 south east) ..  (e);
\draw (b.center) .. controls (b.4 south east) and (c.4 south west) ..  (c);

\draw (c) .. controls (c.4 north east) and (f.4 south east) ..  (f);
\draw (c.center) .. controls (c.16 south east) and (f.16 north east) ..  (f.center);

\draw (d) .. controls (d.8 north west) and (f.4 south west) ..  (f.center);
\draw (d.center) .. controls (d.4 north east) and (e.4 north west) ..  (e);

\draw (e.center) .. controls (e.8 north east) and (c.4 north west) ..  (c.center);

\draw (a) .. controls (a.8 south west) and (x.4 south west) .. (x.center).. controls (x.16 north east) and (f.8 north west) ..  (f);

\end{tikzpicture}\] 
A quick computation shows that ${\rm wr}(\L(x_0x_1))=0$.
\end{example}

\begin{lemma} \label{shift}
Consider an element $g=g(T_+,T_-)\in F$, where $T_+, T_-$ have $n$ leaves and denote by $\phi: F\to F$ the shift homomorphism defined by
$\phi(x_i) = x_{i+1}$, $i \geq 0$. Then $\L(\phi(g(T_+,T_-)))=\L(g(T_+,T_-))\cup O$, where $O$ denotes a distant unknot. 
\end{lemma}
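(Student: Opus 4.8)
The plan is to understand precisely how the shift homomorphism $\phi$ acts on the tree-pair picture and then track what this does to the associated link diagram $L(g(T_+,T_-))$. Since $\phi(x_i) = x_{i+1}$, applying $\phi$ amounts to inserting a new caret at the very left of both the top and bottom trees—that is, $\phi(g(T_+,T_-))$ is represented by the pair $(\widehat{T}_+, \widehat{T}_-)$ where each $\widehat{T}_\pm$ is obtained from $T_\pm$ by adjoining a single caret on the leftmost strand, grafting the entire original tree as the right child of a new root. First I would verify this combinatorial description of $\phi$ at the level of trees, either directly from the relation $\phi(x_i)=x_{i+1}$ and the standard dictionary between words in the $x_i$ and tree pairs, or by checking it on the generators and invoking that $\phi$ is a homomorphism.

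Next I would analyze the effect of adding this leftmost caret on Jones's link-construction. The construction passes from the tree pair to the $\Gamma$-graph (equivalently the shaded diagram/medial graph) and then reads off the link diagram. The key observation is that a caret attached on the extreme left, with nothing to its left, contributes an isolated shaded region that is not linked with any other region: in the medial-graph picture this produces a small loop disjoint from the rest of the diagram. Concretely, I would argue that the leftmost added caret creates, in the link diagram, a single circle that bounds a disk meeting no other strand—hence a distant unknot $O$—while leaving the remainder of the diagram an exact copy of $L(g(T_+,T_-))$. This is where the heart of the argument lies: one must show not merely that a new component appears, but that it is split off from the rest, i.e. that it can be isotoped into a disjoint ball. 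I would make this precise by examining the local picture at the new root caret and checking that the new strands close up among themselves without crossing any pre-existing strand.

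The main obstacle I expect is bookkeeping the orientation and the region/vertex correspondence carefully enough to be sure no spurious crossing is introduced between the new loop and the old diagram. In particular, one has to confirm that inserting the caret at position $0$ on both $T_+$ and $T_-$ keeps the top and bottom halves compatible so that the closure of the new strand stays local. I would handle this by appealing to the explicit recipe recalled in the proof of Proposition \ref{wr-hom-F}, where the shading and the boundary-orientation rule $+ = \downuparrows$, $- = \updownarrows$ are described, and by inspecting the single relevant local tangle. A clean way to present this is to exhibit the modified diagram as the original diagram with one extra nested, innermost (or outermost) circle, and then note that such a circle is manifestly a distant unknot. Once the split circle is identified, the conclusion $L(\phi(g(T_+,T_-))) = L(g(T_+,T_-)) \cup O$ follows immediately, and a small worked instance on one of the generators $x_0x_1,\,x_1x_2,\,x_2x_3$ would serve as an illuminating sanity check.
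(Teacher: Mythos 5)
Your proposal is correct and follows essentially the same route as the paper: the paper's proof consists precisely of the pictorial observation that $\phi(g(T_+,T_-))$ is represented by the pair of trees obtained by grafting each of $T_\pm$ as the right subtree of a new root caret, whence the associated link diagram acquires exactly one extra split unknotted component. The only difference is that you spell out the verification (that the new circle closes up without crossing the old diagram) which the paper leaves to the accompanying figure.
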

\begin{proof}
First of all we make some observation concerning the  pictorial interpretation of the map $\phi$. The following pairs of trees represent $g$ and $\phi(g)$, respectively
$$
\begin{tikzpicture}[x=0.6cm, y=0.6cm,
    every edge/.style={
        draw,
      postaction={decorate,
                    decoration={markings}
                   }
        }
]
	\vertex (a) at (0,0) {};
	\vertex (b) at (3,0) {};
	\vertex (c) at (1.5,1.5) {};
	\vertex (d) at (1.5,-1.5) {};
	\vertex (e) at (1.5,0) {};
	\path
		(a) edge (c)
		(b) edge (c)
		(a) edge (d)
		(d) edge (b)
		(2.25,0.75)  edge (e)
		(2.25,-0.75) edge (e)
		(a) edge[dashed]  (b);
	\vertex (a1) at (6,0) {};
	\vertex (b1) at (9,0) {};
	\vertex (c1) at (7.5,1.5) {};
	\vertex (d1) at (7.5,-1.5) {};
	\vertex (e1) at (7.5,0) {};
	\vertex (h1) at (5,0) {};
	\vertex (i1) at (7,2) {};
	\vertex (l1) at (7,-2) {};


	\path
		(a1) edge (c1)
		(b1) edge (c1)
		(a1) edge (d1)
		(d1) edge (b1)
		(8.25,0.75)  edge (e1)
		(8.25,-0.75) edge (e1)
		(h1) edge (i1)
		(h1) edge (l1)
		(d1) edge (l1)
		(c1) edge (i1)
		(a1) edge[dashed]  (b1);
\end{tikzpicture}
$$
Thus, considering the associated links, the outputs differ by the addition of a distant unknot.
\end{proof}
There is an oriented version of the former lemma, namely $\phi$ maps $\vec{F}$ into $\vec{F}$ and, if $g \in \vec{F}$, then the oriented link associated to $\phi(g)$ consists of the link associated to $g$ with opposite orientation and an unknot. Recall that the Jones polynomial is invariant under a simultaneous change of orientation of all the components. Thus we immediately get that $V_g(t)=V_{\phi(g)}(t)$
\begin{proposition}\label{wrzero}
The writhe function is identically zero on $\vec{F}$.
\end{proposition}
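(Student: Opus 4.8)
The plan is to show that the writhe function, already known to be a group homomorphism $\mathrm{wr}\colon\vec F\to\mathbb Z$ by Proposition~\ref{wr-hom-F}, must be the trivial homomorphism. The cleanest route is to exploit the finite generation of $\vec F$: by the result of Golan and Sapir recalled in the Preliminaries, $\vec F$ is generated by the three elements $x_0x_1$, $x_1x_2$ and $x_2x_3$. Since a homomorphism into an abelian group is determined by its values on generators, it suffices to verify that $\mathrm{wr}$ vanishes on each of these three elements. Once that is checked, $\mathrm{wr}(g)=0$ for every $g\in\vec F$ follows immediately from additivity.

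\smallskip

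First I would compute $\mathrm{wr}(x_0x_1)$ directly. This is already essentially done in Example~\ref{ese-x0x1}, where the link diagram associated to $x_0x_1$ is drawn and observed to be a two-component trivial link with $\mathrm{wr}(L_{x_0x_1})=0$. Next I would treat $x_1x_2$ and $x_2x_3$. Here the key observation is that these generators are related to $x_0x_1$ by the shift homomorphism $\phi$ of Lemma~\ref{shift}: concretely, $x_1x_2=\phi(x_0x_1)$ and $x_2x_3=\phi^2(x_0x_1)=\phi(x_1x_2)$. By the oriented version of Lemma~\ref{shift} discussed just before the statement, applying $\phi$ to an element of $\vec F$ replaces the associated oriented link by the same link (with all orientations reversed) together with a disjoint distant unknot. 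Reversing all orientations leaves every crossing sign unchanged, and adding a distant unknot contributes no crossings; hence the writhe is preserved under $\phi$. Therefore
\begin{equation*}
\mathrm{wr}(x_1x_2)=\mathrm{wr}(\phi(x_0x_1))=\mathrm{wr}(x_0x_1)=0,
\end{equation*}
and likewise $\mathrm{wr}(x_2x_3)=\mathrm{wr}(\phi(x_1x_2))=0$.

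\smallskip

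With $\mathrm{wr}$ vanishing on a generating set and $\mathrm{wr}$ a homomorphism to the abelian group $\mathbb Z$, I would conclude that $\mathrm{wr}\equiv 0$ on all of $\vec F$: any $g\in\vec F$ is a product of the generators and their inverses, and additivity forces $\mathrm{wr}(g)$ to be a corresponding integer combination of zeros.

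\smallskip

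\emph{The main obstacle} I anticipate is not the algebra but the careful bookkeeping of the writhe's invariance under the shift. One must be sure that the oriented link $\vec L(\phi(g))$ really differs from $\vec L(g)$ only by (i) a global orientation reversal, which preserves all crossing signs because each sign depends on the orientations of \emph{both} strands at a crossing, and (ii) the addition of a crossing-free distant unknot. A secondary point worth stating explicitly is that the writhe is orientation-preserved even when only \emph{some} components are reversed in the semi-link decomposition — a fact already invoked in the proof of Proposition~\ref{wr-hom-F} — so the argument is internally consistent with the homomorphism property. Alternatively, one could bypass the shift entirely and just compute all three generator writhes by hand from their link diagrams, but the shift argument is shorter and reuses machinery already established.
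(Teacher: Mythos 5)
Your proposal is correct and follows essentially the same route as the paper: both use the homomorphism property from Proposition~\ref{wr-hom-F}, the computation $\mathrm{wr}(x_0x_1)=0$ from Example~\ref{ese-x0x1}, and the shift $\phi$ of Lemma~\ref{shift} to deduce vanishing on the generators $x_1x_2$ and $x_2x_3$. Your write-up is in fact more careful than the paper's, spelling out why reversing all orientations and adding a distant unknot preserve the writhe.
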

\begin{proof}
As $\phi(x_i)=x_{i+1}$,  Lemma \ref{shift} and Example \ref{ese-x0x1} show that $\rm{wr}(x_1x_2)=\rm{wr}(x_2x_3)=0$. As these elements generate $\vec{F}$ we get the thesis.
\end{proof}

\begin{remark}\label{tequal1}
For $A=-1$, $(-A^3)^{-\textrm{wr}(\vec{L})} = 1$ for any $\vec{L}$, thus $(-A^3)^{-\textrm{wr}(\vec{\L}(g))}  = 1$ for all $g \in \vec{F}$ 
(obviously a function of positive type on $\vec{F}$).
Hence, $t = 1$ and, recalling that $V_{\vec{L}}(1) = (-2)^{c(L)-1}$ where $c(L)$ is the number of components of the link $L$ \cite[Theorem 15]{JovN},
we get $V_g(1) = \langle g \rangle(-1) = (-2)^{c(\L(T_+,T_-))-n}$ where $g=g(T_+,T_-)$ for some trees $T_+, T_-$ with $n$ leaves.
It is thus natural to ask whether $V_g(t)$ is of positive type on $\vec{F}$ for $t=1$, or even if $\langle g \rangle (A)$ is of positive type on $F$ for $A=-1$ (they coincide on $\vec{F}$). A positive answer to the first question will be given in the sequel.
\end{remark}

\begin{remark}
It is not difficult to show that 
$$
V_{g^{-1}}(t) = V_g(1/t) , 
$$
since, for $g=g(T_+,T_-)$ where $T_\pm$ have $n$ leaves, 
$$V_{g^{-1}}(t) = \frac{V_{\vec{\L}(T_-,T_+)}(t)}{(-t^{1/2} - t^{-1/2})^{n-1}} 
= \frac{V_{\stackrel{\shortleftarrow}{\L}(T_+,T_-)^*}(t)}{(-t^{1/2} - t^{-1/2})^{n-1}} 
= \frac{V_{\vec{\L}(T_+,T_-)}(t^{-1})}{(-t^{1/2} - t^{-1/2})^{n-1}} 
= V_g(t^{-1}) 
$$
where $\stackrel{\shortleftarrow}{L}$ denotes $\vec{L}$ with the opposite orientation,
$L^*$ is the mirror image of $L$
and we also used that $\L(T_-,T_+)=\L(T_+,T_-)^*$ \cite[Prop.2.4]{AiCo1} and \cite[Theor.3]{JovN}.
\end{remark}

In the sequel we will make use of the following result, which has been obtained in \cite{AiCo1}.
\begin{theorem}
Let $Q \in \{2, 3, 4\}$ 
and let $A$ be any solution of the equation $$A^2+A^{-2}+\sqrt{Q}=0 \ , $$  namely 
$A\in \{\pm e^{\frac{3\pi i}{8}}, \pm e^{-\frac{3 \pi i}{8}}, \pm e^{\frac{5\pi i}{12}}, \pm e^{-\frac{5 \pi i}{12}}, \pm i \}$.
Then the Kauffman bracket function $\langle g \rangle (A)$ is of positive type on $F$.
\end{theorem}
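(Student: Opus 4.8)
The plan is to prove that for every finite family $g_1,\dots,g_r\in F$ the matrix $\big(\langle g_i^{-1}g_j\rangle(A)\big)_{i,j}$ is positive, which by the Gram-matrix criterion recalled in the Preliminaries amounts to producing a Hilbert space $\mathcal H$ and vectors $\xi_{g_1},\dots,\xi_{g_r}\in\mathcal H$ with $\langle g_i^{-1}g_j\rangle(A)=\langle\xi_{g_i},\xi_{g_j}\rangle$. Set $\delta:=-A^2-A^{-2}$. The hypothesis $A^2+A^{-2}+\sqrt Q=0$ reads exactly $\delta=\sqrt Q>0$, so $Q=\delta^2$; since the listed values of $A$ have modulus one, $\delta\in[-2,2]$ and hence $Q\le 4$. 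The core idea is to read $\langle g\rangle(A)$ off a state sum: resolving each crossing of $L(g)$ by the Kauffman skein rule turns the bracket into the Temperley--Lieb (equivalently, the $Q$-state Potts) partition function with loop weight $\delta$, and the normalization $(-A^2-A^{-2})^{-n+1}=\delta^{-n+1}$ is precisely the factor that makes $g\mapsto\langle g\rangle(A)$ insensitive to the addition of opposite carets.

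First I would cut $L(g)$, $g=g(T_+,T_-)$, along the horizontal middle line carrying the $n$ strands, exactly as in the semi-link decomposition $\vec L(g)=(\vec L_+(T_+),\vec L_-(T_-))$ used before Proposition~\ref{wr-hom-F}. Expanding each half-tangle in the Temperley--Lieb basis assigns to the two halves vectors in the Temperley--Lieb algebra $\mathrm{TL}_n(\delta)$, and closing the diagram identifies the normalized bracket with the Markov-trace pairing $\langle g\rangle(A)=\tau\big(v_{T_+}^{\,*}\,v_{T_-}\big)$, where the adjoint on the lower half is legitimate because reflecting it is the mirror-image operation $L(T_-,T_+)=L(T_+,T_-)^{*}$ already recorded in the Remark above. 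I would then pass to the group product: since $g^{-1}=g(T_-,T_+)$ and multiplication in $F$ is realized by stacking half-diagrams after a common caret refinement, the $\delta^{-n+1}$ normalization makes the assignment $g\mapsto\xi_g$ (built from the semi-links of $g$) well defined into the inductive limit
\[
\mathcal H:=\varinjlim \mathrm{TL}_n(\delta),
\]
the connecting maps being the caret inclusions $\mathrm{TL}_n(\delta)\hookrightarrow\mathrm{TL}_{n+1}(\delta)$; a direct computation then yields $\langle g_i^{-1}g_j\rangle(A)=\langle\xi_{g_i},\xi_{g_j}\rangle_{\tau}$.

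The whole argument finally rests on the positivity of the Markov-trace form $\langle x,y\rangle_\tau=\tau(x^{*}y)$ on $\mathrm{TL}_n(\delta)$, and this is where the restriction $Q\in\{2,3,4\}$ enters. By Jones' theorem on index values this sesquilinear form is positive semidefinite precisely when $\delta\in\{2\cos(\pi/k):k\ge 3\}\cup[2,\infty)$, equivalently $Q=\delta^2\in\{4\cos^2(\pi/k)\}\cup[4,\infty)$; the only integers in the range $0\le\delta\le 2$ are $\delta=\sqrt 2=2\cos\tfrac{\pi}{4}$ ($Q=2$), $\delta=\sqrt 3=2\cos\tfrac{\pi}{6}$ ($Q=3$) and $\delta=2$ ($Q=4$). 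For these $\delta$ the form is positive, $\mathcal H$ becomes a genuine pre-Hilbert space after quotienting by the radical, and positivity of the Gram matrix follows. I expect the main obstacle to be the bookkeeping of the two middle steps: verifying that the semi-link expansion truly reproduces the Markov-trace pairing (so that complex conjugation lands on the correct half via the mirror-image identity) and that the $\delta^{-n+1}$ normalization renders $g\mapsto\xi_g$ independent of the chosen tree representatives, i.e. compatible with the caret inclusions. Once these compatibilities are secured, the positivity input from Jones' theorem is immediate, and it is exactly this input that singles out $Q\in\{2,3,4\}$.
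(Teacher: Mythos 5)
Your argument is correct in outline, but it is a genuinely different route from the one the authors use. The paper (following \cite{AiCo1}, and as reproduced here for $T$ in Theorem \ref{pos-def-kauff}) never touches the Temperley--Lieb algebra: it rewrites $\langle L\rangle(A)$ as the de la Harpe--Jones \emph{spin model} partition function on the face graph $\Gamma(T_+,T_-)$, with $Q$ integer spin states and edge weights $w(\sigma_i,\sigma_j)\in\{-A^3,A^{-1}\}$; the hypothesis $|A|=1$ forces $w_-=w_+^{-1}=\overline{w_+}$, so splitting the spin sum over the $n$ shared vertices exhibits $Z\big((\Gamma^i,\Gamma^j)\big)$ as $\langle v_i,v_j\rangle$ for completely explicit vectors $v_i\in\mathbb{C}^{Q^n}$. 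This is elementary (no positivity theorem is invoked) but intrinsically requires $Q$ to be a positive integer, which together with $\sqrt{Q}=-(A^2+A^{-2})\le 2$ is exactly what singles out $Q\in\{2,3,4\}$. Your route instead cuts along the middle line, expands the two semi-links in $\mathrm{TL}$, and feeds the resulting Gram matrix to the positivity of the Markov trace at $\delta=2\cos(\pi/k)$ or $\delta\ge 2$ --- i.e.\ to Jones' index theorem, a much heavier input, and in substance to Jones' original planar-algebra construction of unitary representations of $F$ in \cite{Jo}. What you buy for that price is strictly more: your argument is not confined to integer $Q$ and would give positivity for every $A$ with $-A^2-A^{-2}=2\cos(\pi/k)$, $k\ge 3$, not just $\delta\in\{\sqrt2,\sqrt3,2\}$. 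Two small points to tidy up: your parenthetical classification of the admissible integer values omits $Q=1$ (where $\delta=1=2\cos(\pi/3)$ is also allowed; harmless here since the theorem does not claim it), and the inductive limit $\varinjlim\mathrm{TL}_n(\delta)$ is not really needed --- for a fixed finite family $g_1,\dots,g_r$ you may choose a common refinement and work in a single $\mathrm{TL}_n(\delta)$, which also disposes of the compatibility of the caret inclusions with the trace. The remaining ``bookkeeping'' you flag (that reflecting the lower semi-link conjugates the coefficients because $L(T_-,T_+)=L(T_+,T_-)^*$ and $|A|=1$, and that $\delta^{-n+1}$ matches the trace normalization) is genuine but routine, so I see no gap of substance.
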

For several values of the variable $t^{1/2}$, the Jones polynomial has topological interpretations (see e.g. \cite{Oht}, p.383). For example
\begin{eqnarray}
V_{\vec{L}} (t=1)& = & (-2)^{c(L)-1}, \label{val-jones-1} \\
V_{\vec{L}} (t=i) &=& \left\{\begin{array}{ll} (-\sqrt{2})^{c(L)-1}(-1)^{{\rm Arf}(L)} & \textrm{ if Arf($L$) exists} \label{val-jones-2}\\
0 & \textrm{otherwise ,}
 		\end{array}
 		\right.\\
V_{\vec{L}} (t=e^{\frac{i \pi}{3}}) &=& \pm i^{c(L)-1}\sqrt{-3}^{\; {\rm dim}(H_1(M_{2,L}, \mathbb{Z}_3))} \label{val-jones-3},
\end{eqnarray}		 
where $M_{2,L}$ denotes the double branched cover of $S^3$ branched along $L$.

We now give an interpretation for $t^{1/2}=-1$. 
\begin{proposition}\label{sqrt-1}
The following equality holds
$$
V_{\vec{L}} (t^{1/2} =-1)=2^{c(L)-1} = (-1)^{c(L)-1} V_{\vec{L}} (1).
$$
\end{proposition}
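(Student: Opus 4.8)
The plan is to reduce the statement to the known evaluation \eqref{val-jones-1} at $t=1$, being careful about the one genuinely subtle point: as recorded in Remark \ref{Jones-Laur}, fixing $t$ does not determine $V_{\vec L}$ unless one also fixes the sign of $t^{1/2}$. The standard value $V_{\vec L}(t=1)=(-2)^{c(L)-1}$ is computed with the convention $t^{1/2}=+1$, whereas the proposition asks for the opposite square root $t^{1/2}=-1$. First I would therefore determine how the two choices of $\sqrt t$ are related, which by Remark \ref{Jones-Laur} depends only on the parity of the number of components $c(L)$.

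Next I would split into two cases. If $c(L)$ is odd, then $V_{\vec L}\in\mathbb{Z}[t^{\pm1}]$ is an honest Laurent polynomial in $t$, so its value is insensitive to the choice of $t^{1/2}$; hence $V_{\vec L}(t^{1/2}=-1)=V_{\vec L}(t=1)=(-2)^{c(L)-1}$, which equals $2^{c(L)-1}$ because $c(L)-1$ is even. If instead $c(L)$ is even, then $V_{\vec L}=t^{1/2}R(t)$ with $R\in\mathbb{Z}[t^{\pm1}]$, so replacing $t^{1/2}=+1$ by $t^{1/2}=-1$ multiplies the value by $-1$; thus $V_{\vec L}(t^{1/2}=-1)=-V_{\vec L}(t=1)=-(-2)^{c(L)-1}$, which again equals $2^{c(L)-1}$ since now $c(L)-1$ is odd. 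In both cases $V_{\vec L}(t^{1/2}=-1)=2^{c(L)-1}$.

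The second equality is then pure bookkeeping: substituting \eqref{val-jones-1} gives $(-1)^{c(L)-1}V_{\vec L}(1)=(-1)^{c(L)-1}(-2)^{c(L)-1}=\big((-1)(-2)\big)^{c(L)-1}=2^{c(L)-1}$, as required.

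The only delicate step is the very first one, namely keeping the square-root convention straight and invoking the parity dichotomy of Remark \ref{Jones-Laur}; once this is settled the rest is immediate arithmetic. As an independent check I would instead argue directly from the skein relation: at $t=1$ one has $t^{-1}=t$ and $t^{1/2}-t^{-1/2}=0$ for either choice of $\sqrt t$, so the relation collapses to $V_{\overcrossing}=V_{\undercrossing}$, i.e.\ crossing changes leave the invariant unchanged. Since every link can be reduced to the $c(L)$-component unlink by a sequence of crossing changes, which preserve the number of components, $V_{\vec L}$ must equal its value on that unlink; evaluating the split-union relation $V_{\vec L\cup O}=(-t^{-1/2}-t^{1/2})V_{\vec L}$ at $t^{1/2}=-1$, where $-t^{-1/2}-t^{1/2}=2$, then yields $2^{c(L)-1}$ directly.
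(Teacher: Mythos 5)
Your argument is correct, and your \emph{primary} argument takes a genuinely different route from the paper's, while your closing ``independent check'' is in fact precisely the paper's proof. The paper argues as in Murasugi's treatment of the $t^{1/2}=1$ case: at $t^{1/2}=-1$ one has $t=1$ and $t^{1/2}-t^{-1/2}=0$, so the skein relation collapses to $V_{\overcrossing}=V_{\undercrossing}$, crossing changes become invisible, the link reduces to the $c(L)$-component unlink, and the unlink value $(-t^{1/2}-t^{-1/2})^{c(L)-1}$ evaluated at $t^{1/2}=-1$ gives $2^{c(L)-1}$. Your main argument instead deduces the value from the already-recorded evaluation \eqref{val-jones-1} at $t^{1/2}=+1$ by tracking how the choice of square root affects $V_{\vec{L}}$ via the parity dichotomy of Remark \ref{Jones-Laur}: for $c(L)$ odd the polynomial lies in $\mathbb{Z}[t^{\pm 1}]$ and the choice of root is irrelevant, while for $c(L)$ even the overall factor $t^{1/2}$ contributes a sign, and in both cases one lands on $(-1)^{c(L)-1}(-2)^{c(L)-1}=2^{c(L)-1}$. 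This alternative is attractive because it exhibits the identity $V_{\vec{L}}(t^{1/2}=-1)=(-1)^{c(L)-1}V_{\vec{L}}(1)$ as pure square-root bookkeeping and gets the numerical value for free from \eqref{val-jones-1}, whereas the paper's skein argument is self-contained and does not rely on the parity structure of the Jones polynomial; the one convention you must (and do) make explicit is that \eqref{val-jones-1} is computed with $t^{1/2}=+1$.
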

\begin{proof}
We essentially follow the proof of the case $t^{1/2}=1$ (cf. Proposition 11.2.6, p. 228, \cite{Mura}). When $t^{1/2}=-1$, the skein relation implies that 
$$V_\overcrossing(t^{1/2} =-1)=V_\undercrossing(t^{1/2} =-1)=V_{O\cdots O} (t^{1/2} =-1).$$
By Proposition 11.1.1, p. 220, \cite{Mura}, we have that $V_{O\cdots O} (t^{1/2})=(-t^{1/2}-t^{-1/2} )^{c(L)-1}$ and the claim follows.
\end{proof}

Given $g=g(T_+,T_-)\in F$, where $T_+, T_-$ are trees with $n$ leaves, it is convenient to consider the integer-valued 
function $c(g):=c(\L(T_+,T_-))-n$ on $F$.
Notice that $c(e)=0$, i.e. $c(\L(T,T)) = n$ for every rooted tree $T$ with $n$ leaves, $c(g)=c(g^{-1})$ and $c(g) \leq 0$ for all $g \in F$.

\begin{proposition}\label{lemma-comp-link-F}
We have that ${c(g)} \in -2{\mathbb N}_0$ for all $g \in \vec{F}$.
\end{proposition}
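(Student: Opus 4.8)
The plan is to split the statement into the inequality $c(g)\le 0$, which is already available for all $g\in F$, and the evenness $c(g)\in 2\mathbb{Z}$, which is where the orientability of $\vec{L}(g)$ --- the defining property of $\vec{F}$ --- must enter. Thus it suffices to prove that $c(L(T_+,T_-))\equiv n\pmod 2$ whenever $g=g(T_+,T_-)\in\vec{F}$, since $c(g)=c(L(T_+,T_-))-n$.

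First I would exploit the spanning surface $\Sigma(g)$ described in the proof of Proposition~\ref{wr-hom-F}. For $g\in\vec{F}$ the graph $\Gamma(g)$ is bipartite, so the shaded regions carry a coherent orientation and $\Sigma(g)$ is an \emph{orientable} Seifert surface with $\partial\Sigma(g)=\vec{L}(g)$. Assembling $\Sigma(g)$ from one disk per shaded region and one twisted band per crossing gives $\chi(\Sigma(g))=|V(\Gamma(g))|-|E(\Gamma(g))|$. Since $\Sigma(g)$ is orientable, $\chi=2-2h-b$ with $b$ the number of boundary components, whence $c(L(g))=b\equiv\chi(\Sigma(g))\pmod 2$. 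It is precisely here that membership in $\vec{F}$ is used: for a general $g\in F$ the analogous checkerboard surface is non-orientable, an odd number of crosscaps may occur, and this parity relation can fail --- consistent with the fact that $c(g)$ need only satisfy $c(g)\le 0$ on $F$.

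The remaining, and main, step is the purely combinatorial identity $\chi(\Sigma(g))\equiv n\pmod 2$, equivalently $|V(\Gamma(g))|+|E(\Gamma(g))|\equiv n\pmod 2$, valid for every tree pair with $n$ leaves (orientability plays no role here). I would establish it by induction on the number of carets, starting from the one-leaf diagram, where $L$ is the unknot and $\chi=1=n$, and analysing how a caret-addition alters $L(T_+,T_-)$: such a move changes $n$ and changes $\chi(\Sigma)$ through the local creation of crossings and shaded regions, and one checks that the two changes agree in parity. The distant-unknot (common caret) move, which sends $L\mapsto L\cup O$, $n\mapsto n+1$, and $\chi\mapsto\chi+1$, is an easy consistency check; the genuine obstacle is the bookkeeping for a caret inserted at a single leaf, which is the step I expect to require the most care and a careful reading of Jones's construction.

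Finally, combining $c(L(g))\equiv n\pmod 2$ with the bound $c(g)\le 0$ yields $c(g)\in-2\mathbb{N}_0$. As a sanity check, this evenness is exactly what reconciles the two evaluations at $t=1$ recorded above: it makes $V_g$ a genuine Laurent polynomial in $t$, so the value $(-2)^{c(g)}$ obtained from Remark~\ref{tequal1} (with $t^{1/2}=1$) and the value $2^{c(g)}$ obtained from Proposition~\ref{sqrt-1} (with $t^{1/2}=-1$) must coincide, forcing $(-1)^{c(g)}=1$.
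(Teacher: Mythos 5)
Your argument is correct, but it reaches the parity statement by a genuinely different route from the paper. The paper never touches the spanning surface: it changes crossings to trivialize the diagram, notes that this preserves both the number of black regions and the parity of the rotation number, computes the rotation number by Seifert-smoothing every crossing (so that the resulting circles are the boundaries of the black regions), and concludes $(-1)^{c(L)}=(-1)^{{\rm r}(\vec{L}')}=(-1)^{b(\vec{L})}=(-1)^{n}$. You instead use the orientable checkerboard surface $\Sigma(g)$ and the relation $c(L)=b\equiv\chi(\Sigma)\pmod 2$ valid for orientable surfaces; this is a legitimate alternative, and it has the merit of making explicit \emph{why} bipartiteness of $\Gamma(g)$ is the relevant hypothesis and why the parity can fail on $F\setminus\vec{F}$ (an odd number of crosscaps), whereas the paper's rotation-number computation uses the canonical orientation more implicitly. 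One remark: the step you single out as the ``genuine obstacle,'' namely $\chi(\Sigma(g))\equiv n\pmod 2$, needs no induction at all --- in Jones's construction $\Gamma(g)$ has exactly $|V(\Gamma)|=n$ vertices and $|E(\Gamma)|=2(n-1)$ edges (one edge per caret of $T_+$ and of $T_-$), so $\chi(\Sigma)=|V|-|E|=n-2(n-1)\equiv n\pmod 2$ on the nose. Your proposed induction ``one caret at a time'' is also slightly awkward as stated, since a valid tree pair must keep equal leaf counts in $T_+$ and $T_-$, so single-caret insertions in one tree do not stay inside the class of diagrams under consideration; the direct count above avoids the issue entirely. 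With that substitution your proof is complete and correct.
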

\begin{proof}
With the above notations denote by $\vec{\L}$ the link $\vec{\L}(T_+,T_-)$. Let $b(\vec{L})$ be the number of black regions in the shaded diagram of $\vec{L}$. Since we are only interested in the number of the components of the link, after changing some crossings whenever necessary we get a new link $\vec{L'}$ which is trivial. We observe that $b(\vec{\L})=b(\vec{L'})$.  Denote by ${\rm r}(\vec{\L})$ the rotation number of an oriented link diagram (see \cite{Jae} and the references therein). We have that
\begin{align*}
(-1)^{c(g)} & = (-1)^{c(\vec{\L})-n}=(-1)^{c(\vec{\L})}(-1)^{n}=\\
 & = (-1)^{c(\vec{L'})}(-1)^{n}=(-1)^{{\rm r}(\vec{L'})}(-1)^{n}=\\
 & = (-1)^{b(\vec{\L})}(-1)^{n}= (-1)^{n}(-1)^{n}= 1\\
\end{align*}
where we used the fact that the rotation number may be computed resolving each crossing $\overcrossing$ and $\undercrossing$ as $\upuparrows$, and that the number of black regions in the shaded diagram is equal to the number of the vertices in the face graph.
\end{proof}

\smallskip
After this preparation, we are now ready to state the main results of this section.

\begin{proposition}\label{Vg1}
The function $V_g(1) = 2^{c(\L(T_+,T_-))-n} $ is of positive type on $\vec{F}$  for $g=g(T_+, T_-)$, where $n$ is the number of leaves of $T_+$ and $T_-$.
\end{proposition}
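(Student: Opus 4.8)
The goal is to show that the function
$$
V_g(1) = 2^{c(g)}, \qquad c(g) = c(L(T_+,T_-)) - n,
$$
is of positive type on $\vec{F}$. By Proposition~\ref{lemma-comp-link-F} we already know $c(g) \in -2\mathbb{N}_0$, so $V_g(1) = 2^{c(g)} = 4^{-k(g)}$ where $k(g) := -c(g)/2 \in \mathbb{N}_0$; in particular $V_g(1)$ is a genuine positive real number bounded above by $1 = V_e(1)$. The plan is to exhibit, for each finite family $g_1,\dots,g_r \in \vec{F}$, a Hilbert space $H$ and vectors $v_1,\dots,v_r \in H$ with $V_{g_i^{-1}g_j}(1) = \langle v_i, v_j\rangle$, using the characterization of positive matrices recalled on p.~2 of the excerpt via \cite{Bhatia}.

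First I would pin down a combinatorial/statistical-mechanical interpretation of $2^{c(L(T_+,T_-))}$ as a partition function, exactly as the introduction advertises. The cleanest route is to recall (cf.\ Remark~\ref{tequal1}, which gives $V_g(1) = \langle g\rangle(-1)$) that $V_{\vec{L}}(1) = (-2)^{c(L)-1}$ comes from counting states of the link diagram, and to rewrite $2^{c(g)}$ as a sum over states (colourings) of the semi-links $\vec{L}_+(T_+)$ and $\vec{L}_-(T_-)$. The decomposition $\vec{L}(g(T_+,T_-)) = (\vec{L}_+(T_+), \vec{L}_-(T_-))$, already used in Proposition~\ref{wr-hom-F}, is the key structural feature: a function of positive type on $\vec{F}$ should arise from a vector $v_g$ attached to the \emph{upper} semi-link $T_+$, so that the inner product $\langle v_{T_+}, v_{T'_+}\rangle$ recovers the invariant of the link built from $T_+$ and the mirror of $T'_+$. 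Since $g^{-1}h = g(T'_+, T_-)^{-1} g(T_+,T_-)$ involves gluing the tree of $g$ to the mirrored tree of $h$ (using $L(T_-,T_+) = L(T_+,T_-)^*$ from \cite[Prop.~2.4]{AiCo1}), this is precisely the Gram-matrix structure we want.

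Concretely, I would fix a common tree and reduce to representatives $g_i = g(S_i, R)$ sharing a bottom tree $R$ with $N$ leaves (possible after passing to a common subdivision, using that $V_g(1)$ is well-defined on $\vec{F}$ independently of the representative). Then $V_{g_i^{-1} g_j}(1) = 2^{c(L(S_i, S_j))}$, and I would define $v_{S_i}$ as the vector in the state space $\mathbb{C}^{\{\text{states}\}}$ whose entry at a state $s$ counts the compatible configurations of the semi-link $\vec{L}_+(S_i)$ under that state. The identity $\langle v_{S_i}, v_{S_j}\rangle = 2^{c(L(S_i,S_j))}$ then follows by factoring the state sum for the closed diagram through the shared boundary, which is where the value $t=1$ is essential: at $t=1$ the skein relation collapses (as in Proposition~\ref{sqrt-1}) so that every crossing resolves trivially and the partition function genuinely factorizes over the gluing circle into a sum of products of semi-link contributions.

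The main obstacle I anticipate is establishing that this factorization is numerically exact, i.e.\ that the closed-diagram count $2^{c(L(S_i,S_j))}$ really equals the bilinear pairing of the two semi-link state vectors, with no leftover global factors depending on the individual trees rather than on their gluing. This requires carefully tracking how the number of link components $c(L(S_i,S_j))$ depends on the loop structure created when the boundary strands of the two semi-links are connected, and verifying that the exponent splits as (contribution of $S_i$) $+$ (contribution of $S_j$) $+$ (boundary interaction) in a way that matches a sum-over-states $\sum_s \overline{(v_{S_i})_s}\,(v_{S_j})_s$. Once the state space and the boundary compatibility relation are set up correctly, positivity is automatic from the Gram-matrix criterion, so the entire difficulty is concentrated in the bookkeeping of this one factorization identity.
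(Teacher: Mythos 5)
Your plan is sound, but it is a genuinely different route from the one the paper takes for Proposition~\ref{Vg1}. The paper's own proof is a short reduction: it invokes the theorem (imported from \cite{AiCo1}) that the Kauffman bracket function $\langle g\rangle(A)$ is of positive type on $F$ for $A=\pm i$, and then checks, using ${\rm wr}\equiv 0$ (Proposition~\ref{wrzero}) and the parity statement $c(g)\in -2\mathbb{N}_0$ (Proposition~\ref{lemma-comp-link-F}), that the sign ambiguities cancel so that $\langle g\rangle(\pm i)=V_g(1)=2^{c(L(T_+,T_-))-n}$ exactly; positivity is then inherited with no new Gram-matrix construction. Your direct partition-function argument --- states constant on each component, a vector attached to the upper semi-link indexed by boundary colourings, and the factorization of the state sum through the gluing circle --- is essentially the proof the paper gives later for Theorem~\ref{homfly}, which establishes the stronger statement that $k^{c(g)}$ is of positive type for every nonzero integer $k$ (your case is $k=2$), via the HOMFLY vertex model at $q=1$. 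What your approach buys is generality and self-containedness; what it costs is precisely the bookkeeping you flag as the main obstacle, namely verifying that a component-constant state of the closed diagram is the same thing as a pair of arc-constant states on the two semi-links agreeing on the $2n$ boundary edges --- this is true and is how the paper resolves it, with the vector space $\mathbb{C}^{k^{2n}}$. One small point to make explicit in your write-up: you should reduce to $g_ig_j^{-1}=g(T_+^i,T_+^j)$ (or $g_i^{-1}g_j$, consistently) and note that passing to the mirror image $L(T_-,T_+)=L(T_+,T_-)^*$ does not change the component count, so the Gram identity is unaffected.
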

\begin{proof}
Since the Kauffman function is of positive type (on $F$) for $A=\pm i$ and, by Proposition \ref{wrzero}, ${\rm wr}\equiv 0$, we have that
\begin{align*}
	\langle g\rangle (\pm i) & = \langle \L(T_+, T_-)\rangle (A) (-A^2-A^{-2})^{-n+1}|_{A=\pm i}\\
	& = (-A^{-3})^{{\rm wr}(\vec{\L}(T_+,T_-))} \langle \L(T_+, T_-)\rangle (A) (-A^2-A^{-2})^{-n+1}|_{A= \pm i}\\
	& = (-1)^{c(\L(T_+,T_-))-1}V_{\vec{\L}(T_+,T_-)}(1) 2^{-n+1}\\
	& = (-1)^{c(\L(T_+,T_-))-1}V_{\vec{\L}(T_+,T_-)}(1) (-2)^{-n+1} (-2)^{n-1} 2^{-n+1}\\
	& = (-1)^{c(\L(T_+,T_-))-n}V_g(1)\\
	& = V_g(1) \\
         & = (-2)^{c(\L(T_+,T_-))-1}(-2)^{-n+1}\\
	& = (-1)^{c(\L(T_+,T_-))-n} 2^{c(\L(T_+,T_-))-n} \\
	& = 2^{c(\L(T_+,T_-))-n} \\
\end{align*}
where we have used the relation $V_{\vec{\L}(T_+,T_-)}(1)=(-1)^{c(\L(T_+,T_-))-1}\langle \L(T_+,T_-)\rangle (\pm i)$ in the third equality 
(see Remark \ref{Jones-Laur}, cf. Proposition \ref{sqrt-1}), Proposition \ref{lemma-comp-link-F} in the sixth and the ninth equalities and the identity \eqref{val-jones-1} in the seventh equality.
\end{proof}

\begin{proposition}
The function $V_g(i)$ is of positive type on $\vec{F}$. 
\end{proposition}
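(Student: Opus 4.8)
The plan is to mimic the structure of the preceding Proposition \ref{Vg1} for $V_g(1)$, but now working at the value $t=i$, i.e. $t^{1/2} = i^{1/2}$ corresponding to some fourth root $A$ with $t^{-1/4}=A$. The key idea is to relate $V_g(i)$ to a Kauffman bracket function $\langle g \rangle (A)$ at a value of $A$ for which the theorem quoted above guarantees positivity, using the fact (Proposition \ref{wrzero}) that the writhe function vanishes identically on $\vec{F}$, so that the troublesome factor $(-A^3)^{-\mathrm{wr}(g)}$ disappears entirely.

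First I would identify which value of $A$ corresponds to $t=i$ under the substitution $t^{-1/4}=A$, being careful with the square-root ambiguities flagged in Remark \ref{Jones-Laur}: since $t^{1/2} \equiv A^{-2}$, evaluating at $t=i$ requires fixing $t^{1/2}$, and one checks that the resulting $A$ satisfies $A^2 + A^{-2} = -\sqrt{2}$, i.e. $A^2+A^{-2}+\sqrt{Q}=0$ with $Q=2$. This is exactly the case covered by the quoted theorem, so $\langle g \rangle (A)$ is of positive type on $F$, hence on $\vec{F}$. Because $\mathrm{wr}\equiv 0$ on $\vec{F}$, the identity $V_g(t) = (-A^3)^{-\mathrm{wr}(g)}\langle g\rangle(A)$ collapses to $V_g(i)=\langle g\rangle(A)$, and positivity transfers immediately. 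I would present this as a short computation showing $V_g(i)$ equals the Kauffman bracket function at the relevant root of unity, paralleling the displayed chain of equalities in Proposition \ref{Vg1}.

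There is one subtlety I would address head-on, and it is the likely main obstacle: the identity \eqref{val-jones-2} shows that $V_{\vec{L}}(i)$ can vanish (when $\mathrm{Arf}(L)$ fails to exist), and more importantly $V_{\vec{L}}(i)$ carries the sign $(-1)^{\mathrm{Arf}(L)}$ and a factor $(-\sqrt 2)^{c(L)-1}$, so $V_g(i)$ is a genuine signed (or possibly complex, via the $t^{1/2}$ normalization) quantity rather than a manifestly nonnegative one like $V_g(1)$. The point is that positivity of type is \emph{not} about pointwise nonnegativity of the values but about positive semidefiniteness of the matrix $(V_{g_i^{-1}g_j}(i))_{ij}$; this is supplied automatically by the transfer from the Kauffman bracket, so no separate analysis of the $\mathrm{Arf}$ invariant is needed. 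The real care lies in bookkeeping the half-integer powers of $t$ and the normalization factor $(-t^{1/2}-t^{-1/2})^{-n+1}$ versus $(-A^2-A^{-2})^{-n+1}$, ensuring the two normalizations agree at the chosen value of $A$ so that the equality $V_g(i)=\langle g\rangle(A)$ holds on the nose.

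Concretely, I would organize the proof as: (i) solve $t^{-1/4}=A$ for $t=i$ and verify $A^2+A^{-2}=-\sqrt2$, fixing the branch of $t^{1/2}$ explicitly; (ii) invoke the quoted theorem with $Q=2$ to get $\langle g\rangle(A)$ positive type on $F$; (iii) apply Proposition \ref{wrzero} to kill the writhe factor and conclude $V_g(i)=\langle g\rangle(A)$ as functions on $\vec{F}$; (iv) conclude positivity of type. The computation in step (iii) would be a displayed \texttt{align*} block directly analogous to the one in Proposition \ref{Vg1}, with the normalization factors matched as in step (i).
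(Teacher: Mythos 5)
Your proposal is correct and follows essentially the same route as the paper: identify $t=i$ with $A=\pm e^{3\pi i/8}$ (the $Q=2$ solutions of $A^2+A^{-2}+\sqrt{Q}=0$), invoke the positivity of the Kauffman bracket function at those values, and use Proposition \ref{wrzero} to remove the writhe factor. The one point you defer to ``bookkeeping'' --- reconciling the branch of $t^{1/2}$ implicit in $V_{\vec{L}}(i)$ with the normalization $(-A^2-A^{-2})^{-n+1}=(\sqrt2)^{-n+1}$ --- is resolved in the paper exactly by Proposition \ref{lemma-comp-link-F}, i.e.\ the identity $(-1)^{c(L(T_+,T_-))-n}=1$ on $\vec{F}$, which you should cite explicitly at that step.
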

\begin{proof}
Since the Kauffman function is of positive type (on $F$) for $A=\pm e^{\frac{3\pi i}{8}}$ and, by Proposition \ref{wrzero}, ${\rm wr}\equiv 0$, we have that
\begin{align*}
	\langle g\rangle (\pm e^{\frac{3\pi i}{8}}) & = \langle \L(T_+, T_-)\rangle (A) (-A^2-A^{-2})^{-n+1}|_{A=\pm e^{\frac{3\pi i}{8}}}\\
	& = (-A^{-3})^{{\rm wr}(\vec{\L}(T_+,T_-))} \langle \L(T_+, T_-)\rangle (A) (-A^2-A^{-2})^{-n+1}|_{A= \pm e^{\frac{3\pi i}{8}}}\\
	& = (-1)^{c(\L(T_+,T_-))-1}V_{\vec{\L}(T_+,T_-)}(i) (\sqrt{2})^{-n+1}\\
	& = (-1)^{c(\L(T_+,T_-))-1}V_{\vec{\L}(T_+,T_-)}(i) (-\sqrt{2})^{-n+1} (-\sqrt{2})^{n-1} (\sqrt{2})^{-n+1}\\
	& = (-1)^{c(\L(T_+,T_-))-n}V_g(i) \\
	& = V_g(i) 
\end{align*}
where we have used that $V_{\vec{\L}(T_+,T_-)}(i)=(-1)^{c(\L(T_+,T_-))-1}\langle \L(T_+,T_-)\rangle (\pm e^{\frac{3\pi i}{8}})$ (see Remark \ref{Jones-Laur}) and Proposition \ref{lemma-comp-link-F}.
\end{proof}
Since the Kauffman function is of positive type (on $F$) for $A=\pm e^{-\frac{3\pi i}{8}}$, 
a similar argument would yield that $V_g(-i)$ is of positive type too. However, 
$V_g(i) = V_g(-i)$ and thus we do not get any additional information.

\begin{proposition}\label{prop-V3}
The function $V_g(e^{\frac{\pi i}{3}})$ is of positive type on $\vec{F}$, as well as $V_g(e^{-\frac{\pi i}{3}})$.
\end{proposition}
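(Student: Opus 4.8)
The plan is to replay, almost verbatim, the argument used for $V_g(i)$, now invoking the theorem recalled above from \cite{AiCo1} in the case $Q=3$. For $Q=3$ the solutions of $A^2+A^{-2}+\sqrt{3}=0$ are $A=\pm e^{\frac{5\pi i}{12}}$ and $A=\pm e^{-\frac{5\pi i}{12}}$, and a direct check shows that $t^{-1/4}=A$ selects $t=e^{\frac{\pi i}{3}}$ for the first pair and $t=e^{-\frac{\pi i}{3}}$ for the second. Since the Kauffman bracket function $\langle g\rangle(A)$ is of positive type on $F$ at each of these four values, it suffices to prove that, on $\vec{F}$, the Jones function agrees with the Kauffman bracket function, i.e. $V_g(e^{\frac{\pi i}{3}})=\langle g\rangle(e^{\frac{5\pi i}{12}})$ and $V_g(e^{-\frac{\pi i}{3}})=\langle g\rangle(e^{-\frac{5\pi i}{12}})$; positivity on $\vec{F}$ then follows by restriction from $F$.

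First I would expand $\langle g\rangle(\pm e^{\frac{5\pi i}{12}})$ from the definition $\langle g\rangle(A)=\langle L(T_+,T_-)\rangle(A)(-A^2-A^{-2})^{-n+1}$ and insert the identity $V_g(t)=(-A^3)^{-{\rm wr}(g)}\langle g\rangle(A)$. By Proposition \ref{wrzero} the writhe vanishes on $\vec{F}$, so the prefactor $(-A^3)^{-{\rm wr}(g)}$ is trivial and the computation collapses to a bookkeeping of the two normalization constants, namely $-A^2-A^{-2}=\sqrt{3}$ at $A=\pm e^{\frac{5\pi i}{12}}$ against $-t^{1/2}-t^{-1/2}=-\sqrt{3}$ at $t=e^{\frac{\pi i}{3}}$, followed by the substitution of the topological value of $V_{\vec{L}(T_+,T_-)}(e^{\frac{\pi i}{3}})$ recorded in \eqref{val-jones-3}.

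The one genuinely delicate point---and the step I would treat with most care---is the branch of the square root flagged in Remark \ref{Jones-Laur}. The relation $t^{1/2}\equiv A^{-2}$ forces $t^{1/2}=e^{-\frac{5\pi i}{6}}$, which is the negative of the principal root $e^{\frac{\pi i}{6}}$ underlying the evaluation \eqref{val-jones-3}. Replacing $t^{1/2}$ by $-t^{1/2}$ leaves $V_{\vec{L}}$ unchanged when $c(L)$ is odd and reverses its sign when $c(L)$ is even, so it multiplies $V_{\vec{L}}(e^{\frac{\pi i}{3}})$ by $(-1)^{c(L)-1}$. Propagating this factor through the chain of equalities, together with the two $\sqrt{3}$-normalizations, the total sign accumulates to $(-1)^{c(L(T_+,T_-))-n}=(-1)^{c(g)}$, exactly as in the proofs for $t=1$ and $t=i$.

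Finally I would invoke Proposition \ref{lemma-comp-link-F}, which yields $c(g)\in-2{\mathbb N}_0$ and hence $(-1)^{c(g)}=1$, to conclude $\langle g\rangle(e^{\frac{5\pi i}{12}})=V_g(e^{\frac{\pi i}{3}})$ on $\vec{F}$; the verbatim computation with $A=\pm e^{-\frac{5\pi i}{12}}$ gives $\langle g\rangle(e^{-\frac{5\pi i}{12}})=V_g(e^{-\frac{\pi i}{3}})$. As both right-hand sides are of positive type on $F$, the two Jones evaluations are of positive type on $\vec{F}$, as claimed. I expect no obstacle beyond this sign bookkeeping: once the branch of $t^{1/2}$ is fixed consistently, the argument is structurally identical to the cases already settled.
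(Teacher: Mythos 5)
Your proposal is correct and follows essentially the same route as the paper: reduce to the Kauffman bracket function at $A=\pm e^{\pm 5\pi i/12}$ (the $Q=3$ case of the theorem from \cite{AiCo1}), use Proposition \ref{wrzero} to kill the writhe prefactor, track the branch of $t^{1/2}\equiv A^{-2}$ via the sign $(-1)^{c(L)-1}$ as in Remark \ref{Jones-Laur}, and absorb the residual sign with Proposition \ref{lemma-comp-link-F}. The only cosmetic differences are that the paper does not need the topological evaluation \eqref{val-jones-3} at all (the bracket--Jones identity plus normalization suffices), and it pairs $A=\pm e^{5\pi i/12}$ with $t=e^{-\pi i/3}$ rather than $t=e^{\pi i/3}$ --- a labelling of the conjugate pair that does not affect the conclusion since both evaluations are treated.
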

\begin{proof}
Since the Kauffman function is of positive type (on $F$) for $A=\pm e^{\frac{5\pi i}{12}}$ and, by Proposition \ref{wrzero}, ${\rm wr}\equiv 0$, we have that
\begin{align*}
	\langle g\rangle (\pm e^{\frac{5\pi i}{12}}) & = \langle \L(T_+, T_-)\rangle (A) (-A^2-A^{-2})^{-n+1}|_{A=\pm e^{\frac{5\pi i}{12}}}\\
	& = (-A^{-3})^{{\rm wr}(\vec{L}(T_+,T_-))} \langle \L(T_+, T_-)\rangle (A) (-A^2-A^{-2})^{-n+1}|_{A= \pm e^{\frac{5\pi i}{12}}}\\
	& = (-1)^{c(\L(T_+,T_-))-1}V_{\vec{\L}(T_+,T_-)}(e^{-\frac{\pi i}{3}}) (\sqrt{3})^{-n+1}\\
	& = (-1)^{c(\L(T_+,T_-))-1}V_{\vec{\L}(T_+,T_-)}(e^{-\frac{\pi i}{3}}) (-\sqrt{3})^{-n+1} (-\sqrt{3})^{n-1} (\sqrt{3})^{-n+1}\\
	& = (-1)^{c(\L(T_+,T_-))-n}V_g(e^{-\frac{\pi i}{3}}) \\
	& = V_g(e^{-\frac{\pi i}{3}}) 
\end{align*}
where we have used that $V_{\vec{\L}(T_+,T_-)}(e^{-\frac{\pi i}{3}})=(-1)^{c(\L(T_+,T_-))-1}\langle \L(T_+,T_-)\rangle (\pm e^{\frac{5\pi i}{12}})$ (see Remark \ref{Jones-Laur}) and Proposition \ref{lemma-comp-link-F}.

Since the Kauffman function is of positive type (on $F$) for $A=\pm e^{-\frac{5\pi i}{12}}$ 
the second claim follows by a similar argument or, perhaps more directly, by noticing that $V_g(e^{-\frac{\pi i}{3}})$ is the complex conjugate
of $V_g(e^{\frac{\pi i}{3}})$.
\end{proof}
From Proposition \ref{prop-V3} one can recover the fact, already stated in Proposition \ref{threecol}, that $|V_g(e^{i \pi/3})|^2$ is positive definite on $\vec{F}$.

\medskip
Recall from \cite[(12.4), p. 369]{JoHecke} that, for $t=e^{2\pi i/3}$, one has $V_{\vec{L}}(t)=(-1)^{c(L)-1}$. Since
\begin{align*}
-t^{-1/2}-t^{1/2} & = - (e^{2\pi i/6}+e^{-2\pi i/6}) = -2\cos(\pi/3)=-1
\end{align*}
it holds, 
for $g=g(T_+,T_-)\in \vec{F}$, where $T_\pm$ have $n$ leaves,
$$
V_g(e^{2\pi i/3}) = (-1)^{c(\L(T_+,T_-))-1}(-1)^{-n+1}=(-1)^{c(g)}=1  
$$
by Proposition \ref{lemma-comp-link-F}.

\smallskip
Combining the previous results, we have thus proved the following statement.
\begin{theorem} \label{teo-fin-Jones}
The evaluations of the function $V_g(t)$ at $t = 1, e^{2\pi i/3}, i, e^{\pm i \pi/3}$ are of positive type on $\vec{F}$.
\end{theorem}
By different and more powerful methods, one can indeed show that the Jones function $V_g(t)$ is of positive type on $\vec{F}$ for $t \in \{e^{\pm 2\pi i/k} \ | \ k=3,4,5,\ldots\}$, see \cite{ACJ}.

\section{On the $2$-variable Kauffman polynomial as a function on $\vec{F}$}
The aim of this section is to prove that certain specialisations of the $2$-variable Kauffman polynomial give rise to functions of positive type on the Jones-Thompson group $\vec{F}$. We start recalling some results from \cite{Lip} (also discussed in \cite{DLH-J}). 

Let $\vec{L}$ be an oriented link. Its link diagram can be seen as a graph whose vertices are the crossings. We observe that every vertex is $4$-valent. For each vertex $x$, we denote by $(a_1^x,a_2^x)$ the upper string and by 
 $(a_1^{-x},a_2^{-x})$ the lower string. We assume that the four edges $a_1^{x}$, $a_1^{-x}$, $a_2^{x}$ and $a_2^{-x}$ appear in anticlockwise order. We define the following function $w: \mathbb{Z}_2\times \mathbb{Z}_2\times \mathbb{Z}_2\times \mathbb{Z}_2\to \mathbb{C}$
\begin{eqnarray*}
		  w(s^+,s^-,t^+,t^-):= \left\{
 		\begin{array}{ll}
 		C & \textrm{ if $s^+=s^-\neq t^+=t^-$}  \\
 		C^{-1} & \textrm{ if $s^+=t^-\neq t^+=s^-$}  \\
 		0 & \textrm{ otherwise }  \\
 		\end{array}
 		\right.	
\end{eqnarray*}
Consider the partition function defined by 
$$
	Z_{L}(C) :=\sum_\tau \prod_{x\in V(\vec{L})} w(\tau(a_1^x),\tau(a_1^{-x}),\tau(a_2^{x}),\tau(a_2^{-x})) \ ,
$$
where the sum over the functions $\tau: E(\vec{L})\to \mathbb{Z}_2$, called \emph{states} of $\vec{L}$.  
\begin{theorem} (\cite{Lip}, Theorem 1, p.339)\label{teo-2-var-kauff}
For any $C\in \mathbb{C}$, the following equality holds
$$
	\frac{C^{{\rm wr} (\vec{L})} (-1)^{c(L)-1}}{2} Z_{L}(C) = F_{\vec{L}}(iC^{-1}, iC-iC^{-1}).
$$
\end{theorem}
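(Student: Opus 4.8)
The plan is to show that the left–hand side, which I abbreviate
$\widetilde{Z}_L(C):=\tfrac12\,C^{{\rm wr}(\vec L)}(-1)^{c(L)-1}Z_L(C)$, is an ambient isotopy invariant of $\vec L$ obeying exactly the normalisation and the skein relation that characterise the two–variable Kauffman polynomial at $a=iC^{-1}$, $x=iC-iC^{-1}$; uniqueness of $F$ then forces the asserted identity. The first move is to rewrite the state sum transparently. Since $w$ vanishes unless the four colours around a crossing realise one of the two admissible smoothing patterns (the pairing $\{a_1^x,a_1^{-x}\},\{a_2^x,a_2^{-x}\}$ with weight $C$, or the pairing $\{a_1^x,a_2^{-x}\},\{a_2^x,a_1^{-x}\}$ with weight $C^{-1}$, the two members of a pair being forced to opposite colours), every nonzero state is encoded by a choice of smoothing at each vertex together with a proper $2$–colouring of the resulting disjoint loops, two loops being adjacent when they meet at a smoothed crossing. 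Hence $Z_L(C)=\sum_S C^{\,\#A(S)-\#B(S)}\,\nu(S)$, where $\nu(S)$ is the number of proper $2$–colourings of the loop–adjacency graph $H(S)$ (equal to $2^{\beta_0(H(S))}$ when $H(S)$ is bipartite, and $0$ otherwise).

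This reformulation makes the normalisation transparent. For a crossingless unknot there is a single loop and no weights, so $Z=2$ and $\widetilde{Z}=1=F_O$. For a distant union $L_1\sqcup L_2$ the loop systems, hence the colourings, factor, giving $Z_{L_1\sqcup L_2}=Z_{L_1}Z_{L_2}$; inserting the prefactors and using that ${\rm wr}$ and $c(\cdot)-1$ are additive up to a single extra unit in $c$, one finds $\widetilde{Z}_{L_1\sqcup L_2}=-2\,\widetilde{Z}_{L_1}\widetilde{Z}_{L_2}$. This is precisely $\delta\,\widetilde{Z}_{L_1}\widetilde{Z}_{L_2}$ with $\delta=(a+a^{-1})x^{-1}-1$ at $a=iC^{-1}$, $x=iC-iC^{-1}$: a short computation gives $a+a^{-1}=-iC+iC^{-1}=-x$, whence $\delta=-2$. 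Note how the factors of $i$ cancel, which is exactly why the $i$–free left–hand side can match $F$ at these complex arguments; the normalising constant $\tfrac12(-1)^{c-1}$ is what produces the value $-2$.

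Next I would prove that $\widetilde{Z}_L$ is an ambient isotopy invariant. Reading $w$ as a weight tensor on $\mathbb{C}^2\otimes\mathbb{C}^2$, invariance of $Z_L$ under Reidemeister II and III reduces to local tensor identities (an inverse relation $R\bar R=\mathrm{id}$ for the two–crossing configuration and a Yang–Baxter relation for the three–crossing one), which I would check by a finite case analysis over the $\mathbb{Z}_2$–colourings. In the loop picture a positive (resp. negative) kink multiplies $Z_L$ by $C^{-1}$ (resp. $C$): the admissible smoothing produces a distant circle forced to carry the colour opposite to the through–strand, contributing the corresponding weight and no new colouring freedom, while the other smoothing self–identifies a loop and contributes $0$. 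Since a kink leaves $c(L)$ unchanged, the prefactor $C^{{\rm wr}}$ absorbs the Reidemeister I anomaly and $\widetilde{Z}_L$ descends to an invariant of $\vec L$.

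Finally I would verify the Kauffman skein relation. Expanding the weight at a single crossing gives $Z_{L_+}=C\,Z_{L_0}^{\neq}+C^{-1}Z_{L_\infty}^{\neq}$ and $Z_{L_-}=C^{-1}Z_{L_0}^{\neq}+C\,Z_{L_\infty}^{\neq}$, where $L_0,L_\infty$ are the two smoothings and the superscript records the constraint that the two arcs created at the smoothed site receive opposite colours. The decisive and most delicate step is to remove these ad hoc constraints: I would split each $Z^{\neq}$ according to whether the two arcs lie on the same or on different loops, track the resulting jump in $c(L)$ and hence in the sign $(-1)^{c-1}$, and reassemble the four terms, the parity changes of $c$ being the mechanism through which the factors of $i$ in $a=iC^{-1}$, $x=iC-iC^{-1}$ enter. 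Carrying out this bookkeeping should convert the $C,C^{-1}$ combination into Kauffman's four–term relation for the normalised invariant. As $F$ is the unique ambient isotopy invariant satisfying that relation together with $F_O=1$ and the distant–union rule, all of which $\widetilde{Z}_L$ meets at the stated values, the identity follows. I expect precisely this reconciliation of the constrained smoothing sums with the genuine Kauffman relation, under the sign normalisation, to be the main obstacle; the alternative, a direct vertex–model verification, merely relocates the difficulty to the Yang–Baxter check and the matching of constants.
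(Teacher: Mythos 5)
The first thing to say is that the paper contains no proof of this statement at all: it is quoted verbatim from Lipson (\cite{Lip}, Theorem 1), and the authors use it as a black box in Proposition \ref{2k}. So there is no in-paper argument to compare yours against; your proposal has to stand or fall on its own.

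Your overall strategy --- show that $\widetilde{Z}_L(C)=\tfrac12 C^{{\rm wr}(\vec L)}(-1)^{c(L)-1}Z_L(C)$ satisfies the axioms characterising $F$ at $a=iC^{-1}$, $x=iC-iC^{-1}$, then invoke uniqueness --- is the natural one, and the parts you actually execute are correct: the identification of nonzero states with smoothings plus proper $2$-colourings of the loop graph, the value $1$ on the unknot, and the distant-union factor $\delta=(a+a^{-1})x^{-1}-1=-2$ (your computation $a+a^{-1}=-x$ is right). The gap is that the two steps carrying all the content are announced rather than performed. For Reidemeister II/III you say you ``would check'' a finite case analysis; that is acceptable in outline. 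But for the skein relation the difficulty is not mere bookkeeping, and your own formulas show why: summing your expansions gives $Z_{L_+}+Z_{L_-}=(C+C^{-1})\bigl(Z^{\neq}_{L_0}+Z^{\neq}_{L_\infty}\bigr)$, whereas the target relation needs the coefficient $x=i(C-C^{-1})$ acting on the \emph{unconstrained} invariants of $L_0$ and $L_\infty$; converting one into the other requires controlling $Z^{=}$ versus $Z^{\neq}$, the jump of $c(\cdot)$ (hence of $(-1)^{c-1}$) under each smoothing, and the mismatch between the writhe normalisation in the statement ($C^{{\rm wr}}$) and the one demanded by $F$ ($a^{-{\rm wr}}=i^{-{\rm wr}}C^{{\rm wr}}$), i.e.\ a stray factor $i^{{\rm wr}(\vec L)}$ that must be cancelled by parity identities relating writhe, crossing number and component number. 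You correctly flag this as ``the decisive and most delicate step,'' but you do not carry it out, and without it the argument is a plan rather than a proof. Since the paper simply cites \cite{Lip}, the honest options are either to do the same or to complete this reconciliation explicitly.
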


We are now ready to state and prove the main result of this section.
\begin{proposition}\label{2k}
For any $C\in \mathbb{R}\setminus\{0, 1\} $
the 
function $F_g(iC^{-1}, iC-iC^{-1})$ is of positive type on $\vec{F}$.
\end{proposition}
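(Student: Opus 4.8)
The plan is to reduce the statement to a positivity property of the partition function $Z_L(C)$, and then to realize the resulting matrix as a Gram matrix of "half-diagram" state sums. First I would carry out the normalization bookkeeping. Putting $a=iC^{-1}$ and $x=iC-iC^{-1}$, a direct computation gives $a^{-1}=-iC$, so that $a+a^{-1}=-i(C-C^{-1})$ and hence, for $C^{2}\neq 1$,
$$
(a^{-1}+a)x^{-1}-1=\frac{-i(C-C^{-1})}{i(C-C^{-1})}-1=-1-1=-2.
$$
Thus for $g=g(T_+,T_-)$ with $n$ leaves the normalizing factor equals $(-2)^{1-n}$, and combining this with Theorem \ref{teo-2-var-kauff}, with ${\rm wr}\equiv 0$ on $\vec{F}$ (Proposition \ref{wrzero}, so that $C^{{\rm wr}(\vec L)}=1$), and with $c(L(T_+,T_-))-n\in -2\mathbb{N}_0$ (Proposition \ref{lemma-comp-link-F}, so that $(-1)^{c(L)-n}=1$), all the signs cancel and I obtain the clean identity
$$
F_g(iC^{-1},iC-iC^{-1})=2^{-n}\,Z_{L(g)}(C),
$$
a real number (the $C\neq 0$ assumption being what makes the weights defined, and the identity extending to the remaining exceptional value through the polynomial side $Z_{L(g)}(C)$). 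It therefore suffices to prove that $g\mapsto 2^{-n}Z_{L(g)}(C)$ is of positive type on $\vec{F}$ for every real $C\in\mathbb{R}\setminus\{0,1\}$.

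Next I would set up the positive-type computation by putting finitely many elements over a common tree. Given $g_1,\ldots,g_r\in\vec{F}$, I write $g_i=g(A_i,W)$ with a common bottom tree $W$, choose a common expansion $E$ of the $A_i$, and let $W_i'$ be the induced expansions of $W$; then $g_i^{-1}g_j=g(W_i',W_j')$, where all the $W_i'$ share the same number $N'$ of leaves. Since $\varphi:=2^{-n}Z_{L(\cdot)}(C)$ is well defined on the group, positive type reduces to showing that the matrix $\big(Z_{L(W_i',W_j')}(C)\big)_{i,j}$ is positive semidefinite.

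I would then exploit the semi-link decomposition $\vec{L}(W_i',W_j')=(\vec{L}_+(W_i'),\vec{L}_-(W_j'))$. Cutting the diagram along the horizontal line separating the upper from the lower semi-link, each state $\tau\colon E(\vec L)\to\mathbb{Z}_2$ restricts to a boundary state $\beta$ on the finite set of middle edges, and summing first over the interior of each half yields the factorization
$$
Z_{L(W_i',W_j')}(C)=\sum_{\beta}\Phi_+(W_i')(\beta)\,\Phi_-(W_j')(\beta),
$$
where $\Phi_{\pm}$ denote the partial state sums over the two halves with boundary fixed to $\beta$. Setting $v_i:=\Phi_+(W_i')$, regarded as a vector in the finite-dimensional space $H$ of functions on the middle boundary states (real valued, since $C$ is real), the right-hand side becomes a Gram-type pairing of the $v_i$ against the $\Phi_-(W_j')$.

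The main obstacle is the final identification of this pairing with a genuine inner product, namely the claim $\Phi_-(W_j')=\Phi_+(W_j')$. The lower semi-link of a tree is the mirror image of its upper semi-link, and mirroring interchanges over- and under-crossings, which at the level of the local weight exchanges $C$ and $C^{-1}$ in $w$. The crux is to verify that, once the boundary state $\beta$ is fixed, this exchange leaves the partial state sum unchanged; I expect this to follow from a symmetry of the weight tensor $w$ (invariance under the $90^{\circ}$ rotation/transposition of its legs that realizes $C\leftrightarrow C^{-1}$) together with the planar structure of the tree half-diagrams, and to hold for every real $C$ — which explains why no sign restriction on $C$ is imposed. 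Granting this identification, one gets $Z_{L(W_i',W_j')}(C)=\sum_{\beta}v_i(\beta)\,v_j(\beta)=\langle v_i,v_j\rangle$, so the matrix is of Gram form and therefore positive semidefinite by the criterion recalled in the Preliminaries, completing the proof.
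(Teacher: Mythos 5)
Your proposal follows essentially the same route as the paper's proof: the same normalization computation giving $(a^{-1}+a)x^{-1}-1=-2$, the same appeal to ${\rm wr}\equiv 0$ and $(-1)^{c(g)}=1$ to reduce the claim to positive semidefiniteness of the matrix of partition functions $Z_{L}(C)$, and the same factorization of the state sum over the boundary edges between the two semi-links to exhibit a Gram matrix. The one step you single out as the ``crux'' --- that the lower-half partial state sum agrees with the upper-half one even though mirroring swaps over- and under-crossings --- is passed over silently in the paper, which simply asserts $Z_{\vec{L}(T_+^i,T_+^j)}=\langle v_{T_+^i},v_{T_+^j}\rangle$; your remark that this rests on a symmetry of the weight $w$ (together with reality of the entries for $C\in\mathbb{R}$) is a legitimate refinement of the same argument rather than a different approach.
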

\begin{proof}
Without loss of generality we can suppose that $g_i=g(T_+^i,T_-)$  where $T_+^i$, $i=1, \cdots, r$ and $T_-$ have $n$ leaves  and $g_ig_j^{-1}=g(T^i_+,T^j_+)$. A simple computation shows that, for $a=iC^{-1}$ and $x=iC-iC^{-1}$,  the following equality holds 
$$
(a^{-1}+a)x^{-1}-1=-2.
$$
By Proposition \ref{wrzero} we may omit the factor $C^{{\rm wr} (\vec{L})}/2$. 
We have to prove that the matrix $\Big(F_{\vec{\L}(T^i_+,T^j_+)}(iC^{-1}, iC-iC^{-1})/(-2)^{n-1}\Big)_{i,j=1}^r$ is positive semi-definite. We have that
\begin{align*}
F_{\vec{\L}(T^i_+,T^j_+)}(iC^{-1}, iC-iC^{-1})/(-2)^{n-1} 
& = \frac{(-1)^{c(\L(T^i_+,T^j_+))-1} Z_{\L(T^i_+,T^j_+)}(C)}{2 (-2)^{n-1}}  \\
& = \frac{(-1)^{c(\L(T^i_+,T^j_+))-1} Z_{\L(T^i_+,T^j_+)}(C)}{2 (-1)^{n-1} 2^{n-1}} \\
& = \frac{(-1)^{c(\L(T^i_+,T^j_+))-n} Z_{\L(T^i_+,T^j_+)}(C)}{2^{n}} \\
& =  \frac{Z_{\L(T^i_+,T^j_+)}(C)}{2^{n}}
\end{align*}
by Proposition \ref{lemma-comp-link-F}.
Since the factor $1/2^n$ is positive, we may neglect it. Thus we have to prove that the matrix $(Z_{\vec{\L}(T_+^i,T_+^j)}(C))_{i,j=1}^r$ is positive semidefinite. 

We have that
\begin{eqnarray*}
	Z_{\L(T_+,T_-)}(C) &=& \sum_\tau \prod_{x\in V(\vec{\L}(T_+,T_+))} w(\tau(a_1^x),\tau(a_1^{-x}),\tau(a_2^{x}),\tau(a_2^{-x})) \\
	&=& \sum_{\tau_0} \left(\sum_{\tau_+} \prod_{x\in V(\vec{\L}(T_+))} w(\tau(a_1^x),\tau(a_1^{-x}),\tau(a_2^{x}),\tau(a_2^{-x}))\right) \\
	& & \times   \left(\sum_{\tau_-} \prod_{x\in V(\vec{\L}(T_-))} w(\tau(a_1^x),\tau(a_1^{-x}),\tau(a_2^{x}),\tau(a_2^{-x})) \right)\\
\end{eqnarray*}
where we have decomposed each state $\tau$ as $(\tau_0,\tau_+,\tau_-)$, $\tau_0$ being a function on edges in common between the semi-links, $\tau_+$ a function on the the remaining edges in $\vec{\L}(T_+)$ and $\tau_-$ a function on the the remaining edges in $\vec{\L}(T_-)$.
For any $\tau_0=(\tau_1,\cdots ,\tau_{2n})$, the expression $\sum_{\tau_+} \prod_{x\in V(\vec{\L}(T_+))} w(\tau(a_1^x),\tau(a_1^{-x}),\tau(a_2^{x}),\tau(a_2^{-x}))$ defines the $\tau$-th component of a vector $v_{T_+}$ in $\H=\mathbb{C}^{2^{2n}}$. The choice of the vector space $\mathbb{C}^{2}$ is due to the two different values assigned to each edge. Thus, we may define  vectors $v_{T_+^i}$ for all $i=1,\ldots , r$ such that 
$$
	Z_{\vec{\L}(T_+, T_-)}=\langle v_{T_+^i},v_{T_+^j}\rangle.
$$
It follows 
that the matrix $\Big(F_{\vec{\L}(T^i_+,T^j_+)}(iC^{-1}, iC-iC^{-1})/(-2)^{n-1}\Big)_{i,j=1}^r$ is positive semi-definite for any $r$, i.e. the function $F_g(iC^{-1}, iC-iC^{-1})$ is of positive type.
\end{proof}

Looking back at Theorem \ref{teo-fin-Jones}, it is straightforward to observe the following fact.
\begin{corollary}
The evaluations of the $2$-variable Kauffman functions 
$$F_g(-1,2), \quad F_g(-e^{-i 3 \pi/8},2\cos(\pi/8)), \quad F_g(-e^{-i  \pi/4},2\cos(\pi/12))$$ 
are of positive type on $\vec{F}$.
\end{corollary}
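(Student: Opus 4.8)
The plan is to recognize the three listed pairs $(a,x)$ as the images of the Jones evaluation points $t = 1,\, i,\, e^{i\pi/3}$ under the classical specialization that recovers the Jones polynomial from the two-variable Kauffman polynomial, and then simply to invoke Theorem \ref{teo-fin-Jones}.

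First I would recall the well-known fact (see e.g. \cite{Lic}) that for any oriented link $\vec{L}$ one has $V_{\vec{L}}(t) = F_{\vec{L}}(-t^{-3/4},\, t^{1/4}+t^{-1/4})$, where $t^{1/4}$ is a fixed fourth root. The substitution $a = -t^{-3/4}$, $x = t^{1/4}+t^{-1/4}$ is compatible with the normalizations chosen in the definitions of $V_g$ and $F_g$. Indeed, setting $u := t^{1/4}$, a short computation gives $(u^{3}+u^{-3})/(u+u^{-1}) = u^{2}+u^{-2}-1$, so that the Kauffman disjoint-union factor becomes
$$(a^{-1}+a)x^{-1}-1 = -(t^{1/2}+t^{-1/2}) = -t^{1/2}-t^{-1/2},$$
which coincides with the Jones disjoint-union factor $-t^{-1/2}-t^{1/2}$. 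Consequently the renormalizing powers $[(a^{-1}+a)x^{-1}-1]^{-n+1}$ and $(-t^{-1/2}-t^{1/2})^{-n+1}$ agree, and the identity descends to the group functions: $F_g(-t^{-3/4},\, t^{1/4}+t^{-1/4}) = V_g(t)$ for every $g \in \vec{F}$.

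Next I would substitute the three evaluation points. For $t=1$ one obtains $(a,x)=(-1,2)$; for $t=i$, using $t^{-3/4}=e^{-i3\pi/8}$ and $t^{\pm 1/4}=e^{\pm i\pi/8}$, one obtains $(a,x)=(-e^{-i3\pi/8},\, 2\cos(\pi/8))$; and for $t=e^{i\pi/3}$, using $t^{-3/4}=e^{-i\pi/4}$ and $t^{\pm 1/4}=e^{\pm i\pi/12}$, one obtains $(a,x)=(-e^{-i\pi/4},\, 2\cos(\pi/12))$. These are exactly the three pairs appearing in the statement. Since Theorem \ref{teo-fin-Jones} asserts that $V_g(t)$ is of positive type at $t=1,\,i,\,e^{i\pi/3}$, and the two group functions coincide under the specialization, the three Kauffman evaluations are of positive type on $\vec{F}$ as well.

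I do not anticipate a genuine obstacle, as the content is entirely the matching of normalization factors against the known Jones--Kauffman specialization. The only point demanding a little care is the choice of branch of $t^{1/4}$ together with the sign convention in the specialization, which must be fixed so that $a=-t^{-3/4}$ (rather than $-t^{3/4}$) reproduces the sign $-e^{-i3\pi/8}$ at $t=i$; checking the three target pairs directly against the formula settles this unambiguously.
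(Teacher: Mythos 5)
Your proposal is correct and follows essentially the same route as the paper: both deduce the corollary from Theorem \ref{teo-fin-Jones} via the specialization $V_{\vec{L}}(t)=F_{\vec{L}}(-t^{-3/4},t^{1/4}+t^{-1/4})$. Your explicit verification that the disjoint-union normalization factors $[(a^{-1}+a)x^{-1}-1]^{-n+1}$ and $(-t^{-1/2}-t^{1/2})^{-n+1}$ agree is a detail the paper leaves implicit, but it is the right check and does not change the argument.
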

\begin{proof}
The statement follows at once from Theorem \ref{teo-fin-Jones} and the following relation (see e.g. \cite[Prop. 16.6]{Lic}) between the Jones polynomial and the $2$-variable Kauffman polynomial 
$$
V_{\vec{L}}(t)=F_{\vec{L}}(-t^{-3/4},t^{1/4}+t^{-1/4}).
$$
\end{proof}
However, a direct comparison reveals that only the first of these evaluations can be reobtained by means of Proposition \ref{2k}, when $C=i$.
All the other evaluations of the 2-variable Kauffman function in Proposition \ref{2k} do not refer to the Jones function.

\section{On the Homfly polynomial as a function on $\vec{F}$}
In this section we examine whether certain specialisations of the Homfly polynomial $P$ give rise to functions of positive type on the Jones-Thompson group $\vec{F}$. We briefly recall a vertex model whose associated partition function is related to the Homfly polynomial \cite{JoStat,Tur}. For a detailed exposition we refer to \cite[Theorem 3.1]{Jae}, cf. \cite[Chapter 8]{Kawa}. 

Let $\vec{L}$ be an oriented link diagram. 
For $k\in\mathbb{N}$, the elements in $\Theta_k:=\{1,\ldots,k\}$ are called colours. The functions $\tau: E(\vec{L})\to \Theta_k$ are called \emph{states}.
Consider a $4$-valent vertex with colours $i$ and $j$ as inputs, $h$ and $l$ as outputs. We define the following weights 
$w_\pm: \Theta_k\times \Theta_k\times \Theta_k\times \Theta_k\to \mathbb{C}$
for positive and negative crossings, respectively,
as
\begin{eqnarray*}
		  w_+(i,j;h,l):= \left\{
 		\begin{array}{ll}
 		q-q^{-1} & \textrm{ if $i<j$, $i=h$, $j=l$}  \\
 		1 & \textrm{ if $i=l$, $j=h$, $i\neq j$ }  \\
 		q & \textrm{ if $i=j=h=l$ }  \\
 		0 & \textrm{ otherwise }  \\
 		\end{array}
 		\right.	\\
		w_-(i,j;h,l):= \left\{
 		\begin{array}{ll}
 		q^{-1}-q & \textrm{ if $i>j$, $i=h$, $j=l$}  \\
 		1 & \textrm{ if $i=l$, $j=h$, $i\neq j$ }  \\
 		q^{-1} & \textrm{ if $i=j=h=l$ }  \\
 		0 & \textrm{ otherwise }  \\
 		\end{array}
 		\right.	\\
\end{eqnarray*}

Consider the partition function given by  
$$
	Z_{\vec{L}}(q,k) :=\frac{q-q^{-1}}{q^k-q^{-k}} q^{-k{\rm wr}(\vec{L})} q^{-(k+1)r(\vec{L})}\sum_\tau \prod_{x\in V(\vec{L})} w(\cdot) q^{2s(\vec{L},\tau)} \ ,
$$
where the sum runs over the state functions, $w(\cdot)$ denotes the appropriate weight function, namely $w_\pm$ if the vertex is a positive/negative crossing, ${\rm r}(\vec{L})$ is the rotation number of $\vec{L}$ and $s(\vec{L},\tau)$ is a suitable integer depending on $\tau$ and the rotation numbers of some other diagrams associated to $\vec{L}$.
It can be proved that the above partition function satisfies the skein relation
$$
q^k Z_{\overcrossing}(q,k)-q^{-k} Z_{\undercrossing}(q,k)=(q-q^{-1})Z_{\upuparrows}(q,k)
$$
and thus coincides with the Homfly polynomial $P_{\vec{L}}(q^k,q-q^{-1})$. A simple computation shows that, for $\alpha=q^k$ and $z=q-q^{-1}$ as above,  the following equality holds 
\begin{eqnarray*}
\frac{(\alpha-\alpha^{-1})}{z} & = & \frac{q^k-q^{-k}}{q-q^{-1}}\\
& = & \frac{q^{-k}(q^{2k}-1)}{q-q^{-1}}\\
& = & \frac{q^{-k}(q^2-1)(q^{2(k-1)}+q^{2(k-2)}+\ldots +1)}{q-q^{-1}}\\
& = & q^{-k+1} (q^{2(k-1)}+q^{2(k-2)}+\ldots +1).
\end{eqnarray*}

\begin{lemma}
For $q=1$, it holds $\tilde{Z}_{\vec{L}}(1,k) := \sum_\tau \prod_{x\in V(\vec{L})} w(\cdot) = k^{c(\vec{L})}$.
\end{lemma}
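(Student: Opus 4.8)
The plan is to specialise the vertex weights at $q=1$ and show that the partition function collapses to a count of colourings that are constant along the link components. First I would substitute $q=1$ into the weight functions $w_\pm$. Since $q-q^{-1}=0$ and $q^{-1}-q=0$ at $q=1$, the first line in each of $w_+$ and $w_-$ vanishes, while the remaining two lines all take the value $1$ (as $q=q^{-1}=1$). Consequently both $w_+$ and $w_-$ reduce to one and the same weight, namely
$$
w(i,j;h,l) = \delta_{i,l}\,\delta_{j,h},
$$
so that the weight equals $1$ precisely when $i=l$ and $j=h$, and $0$ otherwise. In particular every nonzero local contribution is exactly $1$, so the product $\prod_{x\in V(\vec L)} w(\cdot)$ over a state $\tau$ is either $1$ or $0$.

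Next I would interpret this weight geometrically. The condition $i=l$, $j=h$ expresses that, at $q=1$, each of the two strands passing through the crossing carries its colour unchanged (the two strands swap their left/right positions but retain their colours). Tracing a single link component through the diagram, its colour is therefore forced to be constant along the entire component: at every crossing the colour is transported from an incoming edge to the outgoing edge continuing the same strand. Hence a state $\tau$ contributes a nonzero value to $\tilde Z_{\vec L}(1,k)$ if and only if $\tau$ is constant on each connected component of $\vec L$, and in that case the product of the weights is $1$.

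Finally I would count: a colouring constant on each component is determined by an independent choice of one colour in $\Theta_k$ for each of the $c(\vec L)$ components, giving exactly $k^{c(\vec L)}$ admissible states, each contributing $1$, whence $\tilde Z_{\vec L}(1,k)=k^{c(\vec L)}$. The step I expect to be the main obstacle is the geometric interpretation: one must verify, using the prescribed anticlockwise ordering of the four edges $a_1^x, a_1^{-x}, a_2^x, a_2^{-x}$ at each vertex together with the orientations distinguishing inputs from outputs, that the relation $i=l$, $j=h$ genuinely encodes ``the colour is preserved along each strand through the crossing,'' so that the nonzero states are precisely the colourings that are constant on components. Once this matching of conventions is pinned down, the counting argument is immediate.
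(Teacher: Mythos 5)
Your argument is correct and is exactly the reasoning the paper compresses into its one-line justification: at $q=1$ both weights reduce to $\delta_{i,l}\delta_{j,h}$, so the only nonvanishing states are those constant on each component, each contributing $1$, and there are $k^{c(\vec{L})}$ of them. You have simply filled in the details (including the convention check on which output continues which strand) that the paper declares "an easy consequence of the particular form of the weights."
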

This is an easy consequence of the fact that, due to the particular form of the weights, the only contributions to the above sum arise from the states that are constant on each component of the link diagram.

A related observation is that there exists the limit 
$$\lim_{q\to  1} Z_{\vec{L}}(q,k) = k^{c(\vec{L})-1} \ . $$
Although for $q \neq 1$ we do not get  a positive definite function, we will recover such a function through this limit (after proper renormalization).

We are now ready to state and prove the main theorem of this section, that extends Proposition \ref{Vg1}.
\begin{theorem}\label{homfly}
For each nonzero integer $k$, the function $k^{c(g)}$ is of positive type on $\vec{F}$.
\end{theorem}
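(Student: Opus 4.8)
The plan is to run the statistical-mechanical argument of Proposition \ref{2k} for the Homfly vertex model, but specialised to the degenerate value $q=1$, where by the preceding Lemma the unnormalised partition function satisfies $\tilde{Z}_{\vec L}(1,k) = k^{c(\vec L)}$. The point of working at $q=1$ rather than with the genuine Homfly function (which, as remarked above, is \emph{not} of positive type) is that there the two vertex weights collapse: $w_+ = w_- =: w$, with $w(i,j;h,l) = 1$ precisely when $i=l$ and $j=h$, and $w(i,j;h,l)=0$ otherwise. This $w$ is real and is symmetric under interchanging inputs and outputs, $w(i,j;h,l) = w(h,l;i,j)$, which is exactly what is needed to turn the cut-and-glue procedure into a genuine Gram matrix.

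First I would reduce to the positive semidefiniteness of a single matrix. Fix $g_1,\dots,g_r \in \vec F$ and, exactly as in Proposition \ref{2k}, write $g_i = g(T_+^i, T_-)$ with all trees having $n$ leaves, so that $g_i g_j^{-1} = g(T_+^i, T_+^j)$ and $c(g_i g_j^{-1}) = c(L(T_+^i, T_+^j)) - n$. I must show that $\big(k^{c(g_i g_j^{-1})}\big)_{i,j=1}^r$ is positive semidefinite. By Proposition \ref{lemma-comp-link-F} the exponent $c(g)$ is even for every $g \in \vec F$, so $k^{c(g)} = |k|^{c(g)}$ as functions on $\vec F$ and I may assume $k>0$; since the colour model of this section is defined only for a positive integer number of colours, this reduction is also what lets me invoke it when $k<0$. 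Writing $k^{c(g_i g_j^{-1})} = k^{-n}\,\tilde Z_{L(T_+^i, T_+^j)}(1,k)$ and factoring out the positive scalar $k^{-n}$, it then remains to prove that $\big(\tilde Z_{L(T_+^i, T_+^j)}(1,k)\big)_{i,j}$ is positive semidefinite.

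To this end I would decompose each state as $\tau = (\tau_0, \tau_+, \tau_-)$, where $\tau_0$ ranges over the colourings of the $2n$ edges shared by the two semi-links and $\tau_\pm$ over the remaining edges of the upper, respectively lower, half. Since $w$ is local, the sum factorises for each fixed $\tau_0$, giving
$$
\tilde Z_{L(T_+^i, T_+^j)}(1,k) = \sum_{\tau_0} \Big(\sum_{\tau_+} \prod_{x \in V(\vec L_+(T_+^i))} w\Big)\Big(\sum_{\tau_-} \prod_{x \in V(\vec L_-(T_+^j))} w\Big) = \langle v_{T_+^i}, v_{T_+^j}\rangle ,
$$
where $v_{T_+^i} \in \mathbb{C}^{k^{2n}}$ has $\tau_0$-component equal to the bracketed upper sum. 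Because at $q=1$ the weights are real and the lower semi-link is the mirror of an upper one, the symmetry $w(i,j;h,l) = w(h,l;i,j)$ guarantees that the lower sum for $T_+^j$ equals the $\tau_0$-component of $v_{T_+^j}$; hence the matrix is a genuine Gram matrix and therefore positive semidefinite.

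The main obstacle is precisely this last identification. For general $q$ one has $w_+ \neq w_-$, and passing from the upper to the lower half reverses the sign of every crossing, so the lower contribution would be governed by the \emph{other} weight; this is the structural reason the full Homfly function fails to be of positive type, and it is repaired only in the limit $q \to 1$. I would therefore take care to verify that at $q=1$ the boundary colourings $\tau_0$ genuinely match across the two halves and that the collapse $w_+ = w_-$ makes both the orientation reversal and the mirror symmetry invisible, so that the two halves contribute the \emph{same} vector. Once this compatibility is checked, the conclusion that $k^{c(g)}$ is of positive type on $\vec F$ follows, extending Proposition \ref{Vg1}, which is the case $k=2$.
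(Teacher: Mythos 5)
Your proposal is correct and follows essentially the same route as the paper: reduce to $k>0$ via the evenness of $c(g)$ from Proposition \ref{lemma-comp-link-F}, identify $k^{c(L(T_+^i,T_+^j))}$ with $\tilde Z_{\vec L(T_+^i,T_+^j)}(1,k)$, and exhibit that matrix as a Gram matrix by splitting each state as $(\tau_0,\tau_+,\tau_-)$. The only difference is that you spell out the symmetry $w_+=w_-$ and $w(i,j;h,l)=w(h,l;i,j)$ at $q=1$, which the paper leaves as "in view of the specific form of the weights, it is not difficult to check"; this is a welcome clarification but not a different argument.
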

\begin{proof}
Suppose that $k\in \mathbb{N}$.
Without loss of generality we can suppose that $g_i=g(T_+^i,T_-)$  where $T_+^i$, $i=1, \cdots, r$ and $T_-$ have $n$ leaves  and $g_ig_j^{-1}=g(T^i_+,T^j_+)$. 
We may neglect the positive factor $1/k^n$ and prove that the matrix $\Big(k^{c(\vec{\L}(T_+^i,T_+^j))}\Big)_{i,j=1}^r$ is positive semi-definite. 

\noindent But the latter matrix coincides with the matrix $\Big(\tilde{Z}_{\vec{\L}(T_+^i,T_+^j)}(1,k)\Big)_{i,j=1}^r$. 
Now we have that
\begin{eqnarray*}
	\tilde{Z}_{\vec{\L}(T_+,T_-)}(1,k) & = & \sum_\tau \prod_{x\in V(\vec{\L}(T_+,T_-))} w(\cdot) \\
&=& \sum_{\tau_0} \left(\sum_{\tau_+} \prod_{x\in V(\vec{\L}(T_+))} w(\cdot)\right) 
	 \left(\sum_{\tau_-} \prod_{x\in V(\vec{\L}(T_-))} w(\cdot) \right)\\
\end{eqnarray*}
where we have decomposed each state $\tau$ as $(\tau_0,\tau_+,\tau_-)$, $\tau_0$ being a function on edges in common between the semi-links, $\tau_+$ a function on the the remaining edges in $\vec{\L}(T_+)$ and $\tau_-$ a function on the the remaining edges in $\vec{\L}(T_-)$.
For any $\tau_0=(\tau_1,\cdots ,\tau_{2n})$, the expression $\sum_{\tau_+} \prod_{x\in V(\vec{\L}(T_+))} w(\cdot)$ defines the $\tau_0$-th component of a vector $v_{T_+}$ in $\H=\mathbb{C}^{k^{2n}}$. 
In view of the specific form of the weights, 
it is not difficult to check that
$$
	\tilde{Z}_{\vec{\L}(T_+, T_-)}(1,k)=\langle v_{T_+^i},v_{T_+^j}\rangle .
$$
Using Proposition \ref{lemma-comp-link-F} we are done.
\end{proof}

By comparison with the corresponding result about the Jones function (Theorem \ref{teo-fin-Jones}) we also obtain:
\begin{corollary}
The 
evaluations of the 
Homfly functions $P_g(1,0)$, $P_g(-i,2i\sin(\pi/4))$ and $P_g(e^{-i  \pi/3},2i\sin(\pi/6))$ are of positive type 
on $\vec{F}$.
\end{corollary}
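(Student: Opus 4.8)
The plan is to reduce the statement to Theorem~\ref{teo-fin-Jones}, exactly along the lines of the preceding corollary for the $2$-variable Kauffman polynomial, by exhibiting the three prescribed Homfly evaluations as specialisations of the Jones function. First I would recover the Jones polynomial from the Homfly polynomial by comparing the two skein relations recalled in the Preliminaries: setting $\alpha = t^{-1}$ and $z = t^{1/2} - t^{-1/2}$ turns the Homfly relation $\alpha P_{\overcrossing} - \alpha^{-1} P_{\undercrossing} - z P_{\upuparrows} = 0$ into the Jones relation, and since both invariants take the value $1$ on the unknot, this yields $V_{\vec{L}}(t) = P_{\vec{L}}(t^{-1}, t^{1/2} - t^{-1/2})$ for every oriented link.

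Next I would check that this identity descends to the renormalised functions on $\vec{F}$. It suffices to verify that the two normalising constants match under the substitution, that is, that $(\alpha - \alpha^{-1})/z = -(t^{1/2} + t^{-1/2})$ when $\alpha = t^{-1}$ and $z = t^{1/2} - t^{-1/2}$; this follows from the factorisation $t^{-1} - t = -(t^{1/2} - t^{-1/2})(t^{1/2} + t^{-1/2})$. Hence $V_g(t) = P_g(t^{-1}, t^{1/2} - t^{-1/2})$ holds as an identity of functions on $\vec{F}$.

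It then remains only to match the three points. Taking $t = 1$ with $t^{1/2} = 1$, $t = i$ with $t^{1/2} = e^{i\pi/4}$, and $t = e^{i\pi/3}$ with $t^{1/2} = e^{i\pi/6}$ yields respectively $(\alpha,z) = (1, 0)$, $(\alpha,z) = (-i, 2i\sin(\pi/4))$ and $(\alpha,z) = (e^{-i\pi/3}, 2i\sin(\pi/6))$, which are exactly the three pairs in the statement. Because $V_g(1)$, $V_g(i)$ and $V_g(e^{i\pi/3})$ are of positive type by Theorem~\ref{teo-fin-Jones}, the corresponding Homfly functions are of positive type as well.

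The point demanding the most care --- and the one I expect to be the main obstacle --- is the ambiguity in the branch of $t^{1/2}$ flagged in Remark~\ref{Jones-Laur}: fixing $z = t^{1/2} - t^{-1/2}$ singles out a specific square root, which need not coincide with the branch $t^{1/2} = A^{-2}$ used in the proof of Theorem~\ref{teo-fin-Jones}. I would dispose of this by showing that on $\vec{F}$ the value of $V_g$ does not depend on the branch at all. Replacing $t^{1/2}$ by $-t^{1/2}$ multiplies $V_{\vec{L}(T_+^i,T_+^j)}(t)$ by $(-1)^{c(L)-1}$, since by Remark~\ref{Jones-Laur} only the even-component case changes sign, and multiplies the normalising factor $(-t^{1/2}-t^{-1/2})^{-n+1}$ by $(-1)^{n-1}$; the congruence $c(L(T_+^i,T_+^j)) \equiv n \pmod 2$ furnished by Proposition~\ref{lemma-comp-link-F} makes these two sign changes cancel. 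Thus every entry of the relevant Gram matrix is unchanged, the two branches define the same positive-type function, and the identification with Theorem~\ref{teo-fin-Jones} is justified.
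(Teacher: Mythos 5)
Your proposal is correct and follows the same route as the paper: both deduce the corollary from Theorem~\ref{teo-fin-Jones} via the identity $V_{\vec{L}}(t)=P_{\vec{L}}(t^{-1},t^{1/2}-t^{-1/2})$ and the matching of the normalising constants. The extra verification that the choice of branch of $t^{1/2}$ is immaterial on $\vec{F}$ (using Proposition~\ref{lemma-comp-link-F}) is a welcome refinement that the paper leaves implicit.
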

\begin{proof}
The statement follows at once from Theorem \ref{teo-fin-Jones} and the following relation between the Jones polynomial and the Homfly polynomial (\cite{F}, p.240)
$$
V_{\vec{L}}(t)=P_{\vec{L}}(t^{-1},t^{1/2}-t^{-1/2}) \ , 
$$
which implies the equality $V_g(t) = P_g(t^{-1},t^{1/2}-t^{-1/2})$.
\end{proof}

\section{Functions of positive type on Thompson group $T$}

We discuss the natural generalization of some results in \cite{AiCo1} to the Thompson group $T$, which is the group of piecewise linear homeomorphisms of the circle (the interval $[0,1]$ with identified endpoints) that are differentiable outside a finite set of rational dyadics and with slopes in $2^{\mathbb Z}$. We are going to prove that the statements about the Kauffman bracket and the Tutte polynomial obtained in \cite{AiCo1} keep their validity also for $T$.

An element of $T$ can be described by a pair of rooted binary trees $(T_+,T_-)$ with the same number of leaves (as for $F$) and such that, in addition, one leaf of $T_-$ has a mark, meaning that it can be joined to the first leaf of $T_+$ and then coherently identifying the remaining leaves in cyclic order.
\[
\begin{tikzpicture}[x=0.6cm, y=0.6cm,
    every edge/.style={
        draw,
      postaction={decorate,
                    decoration={markings}
                   }
        }
]
	\vertex (a) at (0,0) {};
	\vertex (b) at (1,0) {};
	\vertice (c) at (2,0) {};
	\vertex (d) at (3,0) {};
	\vertex (e) at (4,0) {};
	\vertex (f) at (5,0) {};
	\vertex (g) at (6,0) {};
	\vertex (h) at (2.5,0.5) {};
	\vertex (i) at (4.5,0.5) {};
	\vertex (l) at (5,1) {};
	\vertex (m) at (4,2) {};
	\vertex (n) at (3.5,-0.5) {};
	\vertex (o) at (3,-1) {};
	\vertex (p) at (2.5,-1.5) {};
	\vertex (q) at (2,-2) {};

	\path
		(a) edge (q)
		(b) edge (p)
		(c) edge (o)
		(d) edge (n)
		(e) edge (n)
		(n) edge (o)
		(o) edge (p)
		(p) edge (q)
		(c) edge (h)
		(d) edge (h)
		(h) edge (m)
		(e) edge (i)
		(f) edge (i)
		(i) edge (l)
		(g) edge (l)
		(m) edge (l);
	\node [fill=none, scale=1.5] at (8,0) (node) {$\longleftrightarrow$};

	\vertex (c1) at (10,0) {};
	\vertex (d1) at (11,0) {};
	\vertex (e1) at (12,0) {};
	\vertex (f1) at (13,0) {};
	\vertex (g1) at (14,0) {};
	\vertex (h1) at (10.5,0.5) {};
	\vertex (i1) at (12.5,0.5) {};
	\vertex (l1) at (13,1) {};
	\vertex (m1) at (12,2) {};
	\node [fill=none, scale=1] at (15,0) (node) {,};

	\path
		(c1) edge (h1)
		(d1) edge (h1)
		(h1) edge (m1)
		(e1) edge (i1)
		(f1) edge (i1)
		(i1) edge (l1)
		(g1) edge (l1)
		(m1) edge (l1);
\end{tikzpicture}
\quad
\begin{tikzpicture}[x=0.6cm, y=0.6cm,
    every edge/.style={
        draw,
      postaction={decorate,
                    decoration={markings}
                   }
        }
]
	\vertex (a2) at (16,0) {};
	\vertex (b2) at (17,0) {};
	\vertice (c2) at (18,0) {};
	\vertex (d2) at (19,0) {};
	\vertex (e2) at (20,0) {};
	\vertex (n2) at (19.5,-0.5) {};
	\vertex (o2) at (19,-1) {};
	\vertex (p2) at (18.5,-1.5) {};
	\vertex (q2) at (18,-2) {};

	\path
		(a2) edge (q2)
		(b2) edge (p2)
		(c2) edge (o2)
		(d2) edge (n2)
		(e2) edge (n2)
		(n2) edge (o2)
		(o2) edge (p2)
		(p2) edge (q2);
\end{tikzpicture}
\]

Whenever convenient, we will use use the notation $(T_+,T_-,c_-)$, where $T_+$, $T_-$ have $n$ leaves, $c_- \in \{1,\ldots,n\}$ and the mark is on the $c_-$-th leaf of $T_-$.

Concerning the group structure, the inverse of an element $(T_+,T_-,c_-)$ as above is described by the pair $(T_-,T_+, n-c_- +2)$ (for the elements of $F$, i.e. when $c_-=1$, we tacitly assume that $n-c_- +2=1$ is understood mod $n$). 

As for $F$, the representation of an element in $T$ by (marked) trees as above is not unique, the freedom being that one can add or delete opposing carets. Multiplication of two elements $g,g'$ in $T$ can thus be performed by first choosing representatives $g=g(T_+,T_-,c_-)$ and $g'=g(T_-,T'_-,c'_-)$ so to obtain for $gg'$ the representative $g(T_+,T'_-,c_-+c'_- -1)$ (the last entry is defined mod $n$).

To an element $g=g(T_+,T_-,c_-)$ in $T$ we can associate a signed graph $\Gamma(T_+,T_-,c_-)$ as described in \cite{Jo}. This graph
is obtained by glueing an upper graph $\Gamma_+(T_+,c_-)$  and a lower graph $\Gamma_-(T_-)$ (independent of $c_-$).
As for $F$, by adding an opposing pair of carets, the $\Gamma$-graph changes by inserting a new vertex only connected to a vertex of $\Gamma(T_+,T_-,c_-)$ on its left by two parallel edges.

As usual, for any finite graph $G$ we consider the Tutte polynomial $T_G(x,y)$ in two variables \cite{Bollobas}.
We can thus define the Tutte function on the group $T$ by setting
$$
	T_g(x,y) := T_{\Gamma(T_1,T_2,c_-)}(x,y) (x+y)^{-n+1} , \quad g \in T
$$
where $T_1, T_2$ have $n$ leaves and $g=g(T_1,T_2,c_-)\in T$, cf. \cite{AiCo1}.

\begin{theorem}\label{pos-def-tutte}
The Tutte function $T_g(x,y)$ is of positive type on $T$ for $y=e^K$, $x=\frac{y+Q-1}{y-1}$ and $K\neq 0$.
\end{theorem}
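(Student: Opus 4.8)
The plan is to mirror the strategy used for the Jones, Kauffman, and Homfly functions: express the relevant specialisation of the Tutte polynomial as a partition function of a statistical mechanical model, and then exploit the splitting of the link (here, the $\Gamma$-graph) into an upper and a lower part to write the matrix entries as inner products. The specialisation $y=e^K$, $x=\frac{y+Q-1}{y-1}$ is precisely the one that recasts the Tutte polynomial as the partition function of the $Q$-state Potts model. Concretely, up to a prefactor depending only on the number of vertices and edges of $\Gamma$, the quantity $(x-1)^{-?}(y-1)^{-?}T_{\Gamma}(x,y)$ equals $\sum_{\sigma}\prod_{\{u,v\}\in E} e^{K\,\delta(\sigma(u),\sigma(v))}$, where $\sigma$ ranges over maps from the vertices of $\Gamma$ into the $Q$ spin states. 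Since this is the same argument as in \cite{AiCo1} for $F$, the essential content of the theorem is that it continues to work verbatim for $T$.

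First I would reduce, as in the proofs of Propositions \ref{2k} and \ref{homfly}, to showing that a single Gram-type matrix is positive semi-definite. Writing $g_i=g(T_+^i,T_-,c_-)$ with all $T_+^i$ and $T_-$ having $n$ leaves and the same mark $c_-$, one has $g_ig_j^{-1}=g(T_+^i,T_+^j,\,c_-+(n-c_-+2)-1)$; after normalising away the positive scalar prefactor $(x+y)^{-n+1}$ and any Potts-model normalisation, the problem becomes proving that the matrix $\big(Z_{\Gamma(T_+^i,T_+^j,c)}\big)_{i,j=1}^r$ is positive semi-definite, where $Z$ is the Potts partition function. Here it is crucial that $T_g(x,y)$ is genuinely well-defined on $T$ (independent of the caret representative), which follows from the $(x+y)$-multiplicative behaviour of $T_\Gamma$ under insertion of an opposing caret, exactly as recalled in the excerpt for the $\Gamma$-graph of $T$.

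Next I would split the partition function along the edges shared between the upper graph $\Gamma_+(T_+^i,c_-)$ and the lower graph $\Gamma_-(T_-)$. Decomposing each spin configuration $\sigma$ as $(\sigma_0,\sigma_+,\sigma_-)$, where $\sigma_0$ records the spins on the common (boundary) vertices, the sum factorises as $Z_{\Gamma(T_+^i,T_+^j,c)}=\sum_{\sigma_0}\big(\sum_{\sigma_+}\prod_{\text{upper}}\big)\big(\sum_{\sigma_-}\prod_{\text{lower}}\big)$. For fixed boundary data $\sigma_0$, the upper factor defines the $\sigma_0$-th coordinate of a vector $v_{T_+^i}\in\mathbb{C}^{Q^{b}}$, with $b$ the number of boundary vertices, and since the lower graph is the mirror image of the upper one (so the lower weights are the complex conjugates, real and positive here because $K$ is real), one obtains $Z_{\Gamma(T_+^i,T_+^j,c)}=\langle v_{T_+^i},v_{T_+^j}\rangle$. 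By the Gram-matrix criterion recalled after Remark \ref{Jones-Laur}, this matrix is positive semi-definite, and the theorem follows.

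The main obstacle, and the only place where passing from $F$ to $T$ demands genuine care rather than a literal copy of \cite{AiCo1}, is the presence of the cyclic mark $c_-$ and the corresponding modification of the boundary identifications. One must check that the factorisation of $Z$ along the shared edges respects the cyclic gluing prescribed by the mark, i.e. that the boundary vertices common to $g_i$ and $g_j$ are matched compatibly and that the mark shift appearing in $g_ig_j^{-1}$ does not obstruct writing the upper contribution as a single vector independent of $j$. Once one verifies that the cyclic reindexing merely permutes the boundary coordinates (a relabelling of the components of $\sigma_0$) and hence acts by a fixed permutation unitary that cancels in the inner product, the argument for $F$ goes through unchanged; I would expect this to be the step requiring the most attention, and would address it by tracking how the mark acts on the boundary vertex set of $\Gamma$.
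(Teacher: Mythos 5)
Your overall strategy --- recasting the specialised Tutte polynomial as the $Q$-state Potts partition function on the $\Gamma$-graph, splitting that partition function into upper and lower contributions, and reading off a Gram matrix --- is exactly the paper's. But there is a genuine gap at the one point where $T$ differs from $F$, and your write-up both assumes it away and then flags it without resolving it. In your reduction you take $g_i=g(T_+^i,T_-,c_-)$ with \emph{the same} mark $c_-$ for every $i$. This is a real loss of generality: once a common bottom tree $T_-$ is fixed (which can always be arranged by adding carets), the marks $c_-^i$ are determined by the elements $g_i$ themselves and will in general be pairwise distinct; with equal marks one gets $g_ig_j^{-1}=g(T_+^i,T_+^j,1)$ and the problem collapses to the untwisted gluing already handled for $F$. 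So the tuples you actually treat do not exhaust the tuples needed to establish positive type on $T$.

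Your final paragraph correctly identifies the cyclic mark as the crux but leaves it as something ``to be checked''. The paper closes this point by building the mark into the labelling: the vertices of $\Gamma^i:=\Gamma_+(T_+^i,c_-^i)$ are labelled cyclically by $\{1,\ldots,n\}$ according to the numbering of the leaves of $T_-$ (the first leaf of $T_+^i$ being attached to the $c_-^i$-th leaf of $T_-$), so that the vector $v_i$ depends only on $g_i$; the glued graph $(\Gamma^i,\Gamma^j)$ is then isomorphic to $\Gamma(T_+^i,T_+^j,c_-^i-c_-^j+n+1)$ via a cyclic permutation of the vertices, and one concludes by the invariance of the Tutte/Potts partition function under graph isomorphism. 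This is essentially your ``permutation unitary'' idea, but it has to be phrased as an isomorphism of the glued graph with $\Gamma(g_ig_j^{-1})$ rather than as something that ``cancels in the inner product''. A smaller point: for the Potts model on the $\Gamma$-graph the spins live on the $n$ vertices, all of which are shared between $\Gamma^i$ and $\Gamma^j$; there are no interior spins $\sigma_+$, $\sigma_-$, and the weights $e^{-K(1-\delta(\sigma_u,\sigma_v))}$ are already real and positive, so the mirror-image/complex-conjugation discussion is not needed here (it belongs to the link-diagram state sums of the other sections).
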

\begin{proof}
Let $g_1, \ldots, g_r \in T$.
Adding pairs of opposing carets whenever necessary, we can always suppose that  $g_i=g(T_+^i,T_-, c^i_-)$  where $T_+^i$, $i=1,\ldots,r$ and $T_-$ have $n$ leaves. Hence, $g_ig_j^{-1}=g(T^i_+,T^j_+,c^i_- - c^j_- +n+1)$ and we need to show that the matrix 
$$\Big(T_{\Gamma(T^i_+,T^j_+,c^i_- - c^j_- +n+1)}(x,y)/(x+y)^{n-1}\Big)_{i,j=1}^r$$
is positive semi-definite. 
As in \cite{AiCo1}, it is enough to show that the matrix
$$(Z(\Gamma(T^i_+,T^j_+,c^i_- - c^j_- +n+1);Q,K))_{i,j=1}^r$$ is positive semi-definite, 
where $Z(G;Q,K)$ denotes the partition function of the Potts model on the graph $G$ (see e.g. \cite{Welsh}). 

The leaves of the trees $T^i_+$ come with a natural cyclic labelling by elements of $\{1,\ldots,n\}$ induced by numbering the leaves of $T_-$
in increasing order (recall that we join the first leaf of $T^i_+$ with the $c^i_-$ -th leaf of $T_-$).
The vertices of the associated graphs $\Gamma^i:=\Gamma_+(T^i_+,c^i_-)$ come with a natural labelling by elements of $\{1,\ldots,n\}$ in increasing order.  When computing the product $g_ig_j^{-1}$, the associated $\Gamma$-graph  $\Gamma(T^i_+,T^j_+,c^i_- - c^j_- +n+1)$  is obtained, up to a graph isomorphism induced by a cyclic permutation of the vertices, by glueing  $\Gamma^i$ and $\Gamma^j$ 
 (this is a consequence of the rules for $c_-$).

Since the Tutte polynomial/Potts partition function is invariant under graph isomorphisms, it remains to show that the matrix
$$(Z((\Gamma^i,\Gamma^j);Q,K))_{i,j=1}^r $$
is positive semi-definite.
But 
$$Z((\Gamma^i,\Gamma^j));Q,K)=\sum_\sigma e^{-K\sum_{\overline{ij}\in E(\Gamma^i)} (1-\delta(\sigma_i,\sigma_j))}e^{-K\sum_{\overline{ij}\in E(\Gamma^j)} (1-\delta(\sigma_i,\sigma_j))}.
$$
For any $\sigma=(\sigma_1,\cdots ,\sigma_n)$, the expression $e^{-K\sum_{\overline{ij}\in E(\Gamma^i)} (1-\delta(\sigma_i,\sigma_j))}$ defines the $\sigma$-th component of a vector $v_i$ in $\H=\mathbb{C}^{Q^n}$, $i=1,\ldots,r$ and
$$
	Z(\Gamma(T_i,T_j);Q,K)=\langle v_i ,v_j\rangle.
$$
It follows that the matrix $(T_{g_ig_j^{-1}}(x,y))_{i,j=1}^r$ is positive semi-definite for any $r$, i.e. the function $T_g(x,y)$ is of positive type on $T$.
\end{proof}

As in \cite{AiCo1}, we define a Kauffman bracket function on the Thompson group $T$. We recall that the Kauffman bracket (\cite{Kauffman}) is defined by the following skein-relation
$$
		 \left\{\begin{array}{l}
 		\langle \slashoverback\rangle=A\langle \smoothing \rangle+A^{-1}\langle \hsmoothing \rangle\\
 		\langle O\rangle= 1.
 		\end{array}
 		\right.
$$
We note that this polynomial is invariant under regular isotopies. Since the addition of a pair of opposite carets yields a distant unknot, the Kauffman bracket function is defined as
$$
	\langle g\rangle (A) := (-A^2-A^{-2})^{-n}\langle \L(T_1,T_2,c_-)\rangle (A)
$$
where $T_1, T_2$ have $n$ leaves and $g=g(T_+,T_-,c_-)\in T$.

We recall some results proved in \cite{DLH-J}. Given a signed graph $G$. Denote by $G^+$ and $G^-$ the subgraphs whose edges are the positive and the negative edges, respectively. For any $i, j\in V(G)$, define the function
\begin{eqnarray*}
	w(\sigma_i,\sigma_j)&=& \left\{\begin{array}{cc}
 		-A^3 & \textrm{ if $\sigma_i=\sigma_j$} \\ 
 		A^{-1} & \textrm{ if $\sigma_i\neq \sigma_j$} \\ 
 		\end{array}
 		\right.
\end{eqnarray*}
where $\sigma_i$ is the spin at site $i$. Set $w_+(\sigma_i,\sigma_j)=w(\sigma_i,\sigma_j)$ and $w_-(\sigma_i,\sigma_j)=w(\sigma_i,\sigma_j)^{-1}$. 

Consider the partition function defined by 
$$
	Z_{G}(A)=\left(\frac{1}{\sqrt{Q}}\right)^{|V(G)|+1}\sum_\sigma \prod_{\overline{ij}\in E(G^+)} w_+(\sigma_i,\sigma_j)\prod_{\overline{ij}\in E(G^-)}w_-(\sigma_i,\sigma_j) \ ,
$$
where the sum over $\sigma$ runs over all the spin configurations $\{1,\cdots,Q\}^{|V(G)|}$. 

Given a link $L$ with link diagram $D$, we denote by $F(D)$ its face graph (for a definition see \cite{Godsil}, p. 379).
Then the following identity holds
\begin{equation}\label{kaufpartfun}
	\langle L(D)\rangle=Z_{F(D)}.
\end{equation}
We also notice that, for any element in $T$, the corresponding face graph and $\Gamma-$graph coincide, namely
$$
	F(\L(T_+,T_-,c_-))=\Gamma(T_+,T_-,c_-) \ .
$$

\begin{theorem} \label{pos-def-kauff}
The function $\langle g\rangle(A)$, where $A$ is any solution of $A^2 + A^{-2} + \sqrt Q = 0$ for $Q=2, 3, 4$, is of positive type on $T$.
\end{theorem}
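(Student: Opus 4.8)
The plan is to mirror the argument just given for Theorem \ref{pos-def-tutte}, replacing the Potts partition function by the signed-graph partition function $Z_G(A)$ of \cite{DLH-J}. First I would fix $g_1, \ldots, g_r \in T$ and, by inserting opposing carets, arrange that $g_i = g(T_+^i, T_-, c_-^i)$ with all trees having $n$ leaves and a common lower tree $T_-$, so that $g_i g_j^{-1} = g(T_+^i, T_+^j, c_-^i - c_-^j + n + 1)$. The task is then to show that the matrix $(\langle g_i g_j^{-1}\rangle(A))_{i,j=1}^r$ is positive semi-definite.

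Next I would unwind the definition. By the chosen values one has $-A^2 - A^{-2} = \sqrt{Q} > 0$, so the normalizing factor $(-A^2-A^{-2})^{-n} = Q^{-n/2}$ is a positive real constant and may be discarded. Using the identity \eqref{kaufpartfun} together with $F(L(T_+,T_-,c_-)) = \Gamma(T_+,T_-,c_-)$, the remaining entries become $\langle L(T_+^i, T_+^j, \ldots)\rangle(A) = Z_{\Gamma(T_+^i, T_+^j, \ldots)}(A)$, and so it suffices to prove that $(Z_{\Gamma(T_+^i, T_+^j, c_-^i - c_-^j + n + 1)}(A))_{i,j}$ is positive semi-definite. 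Exactly as in the proof of Theorem \ref{pos-def-tutte}, the cyclic rule for the marking shows that $\Gamma(T_+^i, T_+^j, c_-^i - c_-^j + n + 1)$ is, up to a graph isomorphism induced by a cyclic permutation of the boundary vertices, the graph obtained by glueing $\Gamma^i := \Gamma_+(T_+^i, c_-^i)$ and $\Gamma^j$; since $Z_G(A)$ is a graph invariant, I may replace the entries by $Z_{(\Gamma^i,\Gamma^j)}(A)$.

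Then I would factor the spin sum. Writing each configuration $\sigma$ as $(\sigma_0, \sigma_+, \sigma_-)$, where $\sigma_0$ ranges over the shared boundary vertices, the product over edges splits into a product over the edges of $\Gamma^i$ and those of $\Gamma^j$; summing separately over $\sigma_+$ and over $\sigma_-$ defines, for each boundary value $\sigma_0$, the $\sigma_0$-th component of vectors $v_i$ in $\H = \mathbb{C}^{Q^n}$. The decisive point, inherited from the $F$-case of \cite{AiCo1}, is that passing from $g_j$ to $g_j^{-1}$ passes to the mirror image, which flips every edge sign and hence replaces $w_+$ by $w_- = w_+^{-1}$; because every admissible $A$ (the listed roots of unity) has $|A| = 1$, one checks that $w_+^{-1} = \overline{w_+}$, so the lower half contributes precisely the complex conjugate of the upper half. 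Consequently $Z_{(\Gamma^i,\Gamma^j)}(A) = \langle v_i, v_j\rangle$, which is manifestly a Gram matrix and therefore positive semi-definite.

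The main obstacle I expect is bookkeeping rather than conceptual: verifying that the cyclic shift $c_-^i - c_-^j + n + 1$ really realizes the glueing of $\Gamma^i$ and $\Gamma^j$ up to the stated graph isomorphism, and checking that the normalization $(1/\sqrt Q)^{|V|+1}$ in $Z_G$ distributes correctly across the glued halves so that no spurious factor survives. Both are the $T$-specific analogues of steps already carried out for the Tutte function, and, as above, the Hermitian structure of the factorization hinges entirely on the identity $w_+^{-1} = \overline{w_+}$, which is valid precisely because $|A|=1$ for the selected roots.
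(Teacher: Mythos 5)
Your proposal follows essentially the same route as the paper's proof: reduce to a common lower tree, pass to the glued signed graph $(\Gamma^i,\Gamma^j)$ via the identity $\langle L(D)\rangle=Z_{F(D)}$ and the graph isomorphism induced by the cyclic shift of the marking, and exhibit the resulting partition-function matrix as a Gram matrix in $\mathbb{C}^{Q^n}$. The one point you spell out that the paper leaves implicit --- that the inverted element contributes edges of negative sign, and that $w_-=w_+^{-1}=\overline{w_+}$ precisely because $|A|=1$, so the lower half is the complex conjugate of the upper half --- is exactly what makes the Gram identification legitimate, so your version is, if anything, slightly more complete.
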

\begin{proof}
As in the proof of Theorem \ref{pos-def-tutte}, 
without loss of generality we can suppose that $g_i=g(T_+^i,T_-,c_-^i)$  where $T_+^i$, $i=1, \cdots, r$ and $T_-$ have $n$ leaves  and $g_ig_j^{-1}=g(T^i_+,T^j_+,c_-^i-c_-^j+n+1)$. Therefore, it is enough to consider  $\Big(\langle \L(T^i_+,T^j_+,c_-^i-c_-^j+n+1)\rangle /(-A^2-A^{-2})^{n}\Big)_{i,j=1}^r$ and, after neglecting a positive factor, prove that $(\langle \L(T^i_+,T^j_+,c_-^i-c_-^j+n+1)\rangle)_{i,j=1}^r$ is positive semi-definite. We can use the partition function defined above in order to prove our claim, because of the identity (\ref{kaufpartfun}) . 
Again, it is possible to consider a family of graphs $\Gamma^i$ such that $\Gamma(T^i_+,T^j,_+,c_-^i-c_-^j+n+1)$ is (graph) isomorphic to  $(\Gamma^i,\Gamma^j)$. Since 
the partition function depends only on the isomorphism class of the graph, we have that 
$$
	Z(\Gamma(T_+^i,T_+^j,c_-^i-c_-^j+n+1),x,y)=Z((\Gamma^i,\Gamma^j))=\langle v_{T_+^i},v_{T_+^j}\rangle
$$
where, for each $i \in \{1,\ldots,r\}$ and any $\sigma=(\sigma_1,\cdots ,\sigma_n)$, the expression $\prod_{\overline{ij}\in E(\Gamma^i)} w_+(\sigma_i,\sigma_j)$ defines the $\sigma$-th component of a vector $v_{T_+^i}$ in $\H=\mathbb{C}^{Q^n}$, i.e. the component corresponding to $e_{\sigma_1}\otimes \cdots\otimes e_{\sigma_n}$. Therefore, 
the matrix $(\langle \L(T^i_+,T^j_+,c_-^i-c_-^j+n+1)\rangle(x,y))_{i,j=1}^r$ is positive definite for any $r$, i.e. the function $\langle g\rangle$ is of positive type.
\end{proof}

\section{On the Jones polynomial as a function on  $\vec{T}$}
Jones introduced the oriented version of the group $T$, namely
$$
\vec{T}:=\{g \in T \ | {\rm Chr}_{\Gamma(g)}(2) = 2 \}.
$$
It is natural to ask whether some of the results proved for $\vec{F}$ also hold for $\vec{T}$. Clearly, the writhe function ${\rm wr}$ extends to $\vec{T}$ in the obvious way. First of all, we give a result similar to Proposition \ref{wr-hom-F}.
\begin{proposition}
For any $g, h \in\vec{T}$, we have that ${\rm wr}(gh)={\rm wr}(g)+{\rm wr}(h)$.
\end{proposition}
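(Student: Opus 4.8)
The plan is to follow the argument of Proposition \ref{wr-hom-F} almost verbatim, the only additional ingredient being the bookkeeping of the cyclic marking $c_-$ that separates $T$ from $F$. First I would verify that the writhe of the oriented link $\vec{L}(g)$ attached to $g = g(T_+, T_-, c_-) \in \vec{T}$ still splits over the two semi-links, namely ${\rm wr}(\vec{L}(g)) = {\rm wr}(\vec{L}_+(T_+)) + {\rm wr}(\vec{L}_-(T_-))$, and, more importantly, that this quantity does not depend on $c_-$. Indeed, every crossing of the diagram is produced by a caret of $T_+$ or of $T_-$; the marking $c_-$ only dictates how the boundary strands of the upper and lower semi-links are glued cyclically, introducing no new crossings and altering none of the existing ones. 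Since, exactly as recalled in the $\vec{F}$ case, the writhe of each semi-link is insensitive to the orientation of its strands, the total writhe is unaffected by the choice of $c_-$.

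Next I would reuse the mirror-image relation from the $\vec{F}$ setting: the lower semi-link $\vec{L}_-(T)$ is the mirror image of the upper semi-link $\vec{L}_+(T)$ (up to orientation of the strands), cf. \cite[Prop. 2.4]{AiCo1}, whence ${\rm wr}(\vec{L}_+(T)) = -{\rm wr}(\vec{L}_-(T))$. Then, given $g, h \in \vec{T}$, I would choose representatives with a common middle tree, $g = g(T_+, T, c_-)$ and $h = g(T, T_-, c'_-)$. By the multiplication rule for $T$ recalled in Section 5, we have $gh = g(T_+, T_-, c_- + c'_- - 1)$, with the last entry taken mod $n$. Applying the marking-independent decomposition of the first step to $g$, $h$ and $gh$, and substituting the mirror relation, the two contributions of the middle tree $T$ cancel precisely as in Proposition \ref{wr-hom-F}, leaving ${\rm wr}(gh) = {\rm wr}(\vec{L}_+(T_+)) + {\rm wr}(\vec{L}_-(T_-)) = {\rm wr}(g) + {\rm wr}(h)$.

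The main obstacle is the first step: one must check carefully that the cyclic gluing encoded by $c_-$ neither creates crossings between the two semi-links nor changes the signs of the crossings inside each of them, so that the writhe genuinely factors as the sum of the two semi-link writhes \emph{independently} of the marking. This is the one place where the passage from $F$ to $T$ is not purely formal, since the annular closure could a priori interfere with the local structure of the diagram. Once this is secured, the remaining computation is identical, word for word, to the one carried out for $\vec{F}$.
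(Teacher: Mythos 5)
Your proposal is correct and follows essentially the same route as the paper: split the writhe over the two semi-links, check that the cyclic marking $c_-$ does not affect it, and then run the common-middle-tree cancellation of Proposition \ref{wr-hom-F} using the mirror-image relation. The ``obstacle'' you flag --- that the cyclic rearrangement of $\Gamma_+$ might alter crossing signs --- is precisely what the paper verifies, by observing that an edge whose two vertex colours are both swapped yields the $180^\circ$-rotated crossing, which has the same sign; this is the same orientation-reversal fact you yourself invoke, so no further idea is needed to close it.
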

\begin{proof}
The difference with the procedure used for $\vec{F}$ is that in this case one has to consider the graphs $\Gamma_+$ and $\Gamma_-$, then rearrange the graph $\Gamma_+$ and eventually draw a link in the usual manner. We want to prove that for any rearrangement of the graph $\Gamma_+$, the writhe of the semi-link is always the same. This follows from the following observations. First of all we recall that each crossing corresponds to an edge of the $\Gamma_+$ graph. The type of the (oriented) crossing depends on the colours chosen for the two vertices connected by the edge. Consider an edge with left vertex coloured with $+$ and right vertex $-$. Then the corresponding crossing in the $\Gamma_+$ graph is $\overcrossing$. If in the new rearrangement the vertices have the same colours, then clearly the crossing is still positive. Otherwise we obtain the crossing $\vin$ and thus we still have a positive crossing. Now we have proved that the rearrangement does not effect the writhe and the main statement follows from an argument similar to the one used in the proof of Proposition \ref{wr-hom-F}.
\end{proof}
\begin{corollary}
The writhe function is identically zero on $\vec{T}$.
\end{corollary}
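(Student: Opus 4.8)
The plan is to show that the writhe, which by the previous proposition is a homomorphism $\mathrm{wr}\colon \vec{T}\to\mathbb{Z}$, is in fact the zero homomorphism. Two observations immediately constrain it. First, since $\mathbb{Z}$ is torsion-free, $\mathrm{wr}$ must vanish on every element of finite order in $\vec{T}$. Second, $\vec{F}$ sits inside $\vec{T}$ as the subgroup of elements whose marked leaf is the first one, and Proposition \ref{wrzero} already gives $\mathrm{wr}\equiv 0$ on $\vec{F}$. Hence it suffices to produce a generating set of $\vec{T}$ consisting of elements of $\vec{F}$ together with elements of finite order.

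For this I would exploit the description of $T$ as the group generated by $F$ and the rotation elements $c_n$, which are torsion since $c_n^{n+2}=1$. Restricting to the oriented world, I expect $\vec{T}$ to be generated by $\vec{F}$ together with those finite-order rotations that preserve the bipartiteness condition ${\rm Chr}_{\Gamma(g)}(2)=2$, i.e. that actually lie in $\vec{T}$. Granting such a generating set, every generator is killed by $\mathrm{wr}$: the ones in $\vec{F}$ by Proposition \ref{wrzero}, and the torsion ones because the target $\mathbb{Z}$ is torsion-free. It then follows that $\mathrm{wr}\equiv 0$ on all of $\vec{T}$.

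An alternative, and perhaps more self-contained, route uses the marking-independence already isolated in the previous proposition: rearranging $\Gamma_+$ (equivalently, changing the mark $c_-$) does not alter the writhe, so $\mathrm{wr}(g(T_+,T_-,c_-))$ depends only on the pair of trees $(T_+,T_-)$ and not on $c_-$. Writing $\mathrm{wr}(g)=\mathrm{wr}(\vec{L}_+(T_+))+\mathrm{wr}(\vec{L}_-(T_-))$ exactly as in the $\vec{F}$ case (cf. Proposition \ref{wr-hom-F}), one reduces the computation to the writhes of the individual semi-links attached to the trees, which are precisely the quantities that Proposition \ref{wrzero} forces to sum to zero throughout $\vec{F}$.

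The \textbf{main obstacle} is the generation step: making precise that $\vec{T}$ is generated by $\vec{F}$ together with torsion elements, equivalently exhibiting explicit finite-order generators of $\vec{T}$, and checking that the bipartiteness condition cutting out $\vec{T}$ is compatible with such a choice. Everything else is either torsion-freeness of $\mathbb{Z}$ or a direct appeal to Proposition \ref{wrzero}; the entire content of the corollary is concentrated in verifying that no new writhe can arise in $\vec{T}$ beyond what these two inputs already force to vanish.
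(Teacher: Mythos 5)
Both routes you sketch leave a genuine gap, and neither is the paper's argument. In your second, ``self-contained'' route the reduction to Proposition \ref{wrzero} is not justified: an element $g(T_+,T_-,c_-)$ of $\vec{T}$ with $c_-\neq 1$ does not come with an underlying element of $\vec{F}$. Moving the mark back to the first leaf changes the glued graph (the upper part $\Gamma_+(T_+,c_-)$ depends on $c_-$), and bipartiteness of $\Gamma(T_+,T_-,c_-)$ does not obviously entail bipartiteness of $\Gamma(T_+,T_-,1)$; so $g(T_+,T_-,1)$ need not lie in $\vec{F}$, and the semi-links $\vec{L}_\pm(T_\pm)$ need not even acquire an orientation from the $\vec{F}$-construction. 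Consequently ``the quantities that Proposition \ref{wrzero} forces to sum to zero throughout $\vec{F}$'' do not cover the pairs of trees arising from general elements of $\vec{T}$, and the reduction collapses. Your first route is sound as a scheme (a homomorphism into the torsion-free group $\mathbb{Z}$ kills torsion and kills $\vec{F}$), but its entire weight rests on the unproved claim that $\vec{T}$ is generated by $\vec{F}$ together with finite-order elements. That claim is true but is a nontrivial theorem (established later by Golan and Sapir), not something available in this paper; you correctly flag it as the main obstacle, but flagging it does not close the proof.

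The paper's own proof is much more direct and bypasses both issues: it extracts from the proof of the preceding proposition the observation that, with the canonical orientation coming from the $2$-colouring, every crossing of the upper semi-link is positive and every crossing of the lower semi-link is negative, no matter how $\Gamma_+$ is rearranged. Since each crossing corresponds to an edge of $\Gamma_+$ or $\Gamma_-$ and the two halves have the same number of edges ($n-1$ for trees with $n$ leaves), the positive and negative contributions cancel exactly and ${\rm wr}(g)=0$ for every $g\in\vec{T}$. If you want to repair your argument, this sign count is the missing ingredient: it is exactly the statement that each upper semi-link writhe equals $n-1$ and each lower one equals $-(n-1)$, which is strictly stronger than the cancellation you were trying to import from Proposition \ref{wrzero}.
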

\begin{proof}
In the proof of the above Proposition it was shown that in the upper (lower) semi-link there are only crossings of positive (negative) type. This implies that the writhe of the link is zero.
\end{proof}
The proof of the following result is omitted since the argument is the same used in Proposition \ref{lemma-comp-link-F}.
\begin{proposition}
We have that $(-1)^{c(g)}\equiv 1$ on $\vec{T}$.
\end{proposition}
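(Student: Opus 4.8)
The plan is to transplant the parity argument of Proposition~\ref{lemma-comp-link-F} to the cyclic setting, making only the adjustments forced by the marked leaf. Writing $g=g(T_+,T_-,c_-)$ with $T_\pm$ having $n$ leaves and setting $\vec L:=\vec L(T_+,T_-,c_-)$, one has $c(g)=c(\vec L)-n$ by definition, so it suffices to prove $c(\vec L)\equiv n\pmod 2$. First I would observe, exactly as for $\vec F$, that since only the number of components matters one may switch some crossings of $\vec L$ to obtain a trivial link $\vec{L'}$; this changes neither $c(\vec L)$ nor the number $b(\vec L)$ of black regions of the shaded diagram, the shading depending only on the underlying projection. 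Since the rotation number ${\rm r}$ is computed by resolving each crossing as the oriented smoothing $\upuparrows$, it too depends only on the oriented projection, and the boundary of each black region becomes a single Seifert circle, so that $(-1)^{{\rm r}(\vec{L'})}=(-1)^{b(\vec L)}$.

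The only genuinely new ingredient relative to $\vec F$ is the cyclic closure encoded by $c_-$. Here I would appeal to the identity $F(L(T_+,T_-,c_-))=\Gamma(T_+,T_-,c_-)$ recorded in the previous section, which guarantees that the black regions of the shaded annular diagram are precisely the vertices of $\Gamma(T_+,T_-,c_-)$; hence $b(\vec L)$ equals the number of those vertices. Combining this with the trivial-link identity $(-1)^{c(\vec{L'})}=(-1)^{{\rm r}(\vec{L'})}$ used in Proposition~\ref{lemma-comp-link-F}, the computation reads
$$(-1)^{c(g)}=(-1)^{c(\vec L)-n}=(-1)^{c(\vec{L'})}(-1)^n=(-1)^{{\rm r}(\vec{L'})}(-1)^n=(-1)^{b(\vec L)}(-1)^n,$$
so the whole statement collapses to the single parity count $b(\vec L)\equiv n\pmod 2$.

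I expect this last count to be the crux, because it is the only place where the passage from the linear closure of $\vec F$ to the annular closure of $\vec T$ can interfere. Concretely I would apply Euler's formula to the $4$-valent projection of the closed diagram, whose crossings are in bijection with the carets of $T_+$ and $T_-$, and verify that the checkerboard splitting is even, so that the black class (the one disjoint from the outer region) contributes exactly $n$ vertices to $\Gamma(T_+,T_-,c_-)$. One must check that neither the closure arc nor the shift by $c_-$ perturbs this parity; once that is confirmed, $b(\vec L)\equiv n\pmod 2$ gives $(-1)^{c(g)}=(-1)^n(-1)^n=1$, and the argument is indeed word-for-word that of Proposition~\ref{lemma-comp-link-F}.
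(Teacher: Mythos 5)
Your argument is correct and is essentially the paper's own: the authors omit the proof precisely because it is the same as that of Proposition~\ref{lemma-comp-link-F}, and your crossing-switch/rotation-number/black-region chain is exactly that argument adapted to the marked-leaf setting. The final parity count that you flag as still needing an Euler-formula verification is in fact already settled by the identity $F(L(T_+,T_-,c_-))=\Gamma(T_+,T_-,c_-)$ that you cite, since $\Gamma(T_+,T_-,c_-)$ has exactly $n$ vertices by construction, so $b(\vec{L})=n$ on the nose and neither the closure arc nor the shift by $c_-$ enters.
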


Now,  basically by the same proofs as for $\vec{F}$,  we obtain the following result.
\begin{theorem} \label{teo-fin-Jones-T}
The evaluations of the function $V_g(t)$ at $t = 1, i, e^{\pm i \pi/3}$ are of positive type on $\vec{T}$.
\end{theorem}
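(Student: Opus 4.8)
The goal is to establish that $V_g(t)$ is of positive type on $\vec{T}$ for $t = 1, i, e^{\pm i\pi/3}$, and the natural strategy is to transplant the $\vec{F}$-arguments (Propositions \ref{Vg1}, the proposition on $V_g(i)$, and Proposition \ref{prop-V3}) to the group $\vec{T}$, relying on the three structural facts that have just been re-established in this section. Concretely, the three ingredients those $\vec{F}$-proofs used were: (a) that the writhe function vanishes, now available on $\vec{T}$ by the Corollary above; (b) that $(-1)^{c(g)} \equiv 1$, now available by the preceding Proposition; and (c) that the Kauffman bracket function $\langle g \rangle(A)$ is of positive type on the ambient group at the relevant roots of unity $A \in \{\pm i, \pm e^{3\pi i/8}, \pm e^{5\pi i/12}\}$. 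The first two carry over verbatim; the real content is ingredient (c), which for $\vec{F}$ came from the corresponding $F$-result but which here must come from the $T$-result.

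\medskip
\noindent\textbf{Key steps, in order.}
First I would invoke Theorem \ref{pos-def-kauff}, which provides exactly the positivity of $\langle g \rangle(A)$ on $T$ for every $A$ solving $A^2 + A^{-2} + \sqrt{Q} = 0$ with $Q \in \{2,3,4\}$; this is the $T$-analogue of the $F$-theorem used in the $\vec{F}$ section, and it supplies ingredient (c) for the full group $T$, hence a fortiori for its subgroup $\vec{T}$. Second, for each target value of $t$ I would reproduce the chain of equalities from the matching $\vec{F}$-proposition essentially unchanged: start from $\langle g \rangle(A)$ evaluated at the appropriate $A$, insert the factor $(-A^{-3})^{{\rm wr}(\vec{L})} = 1$ (legitimate since ${\rm wr} \equiv 0$ on $\vec{T}$), use the relation $V_g(t) = (-A^3)^{-{\rm wr}(g)}\langle g \rangle(A)$ together with the explicit evaluations \eqref{val-jones-1}, \eqref{val-jones-2}, \eqref{val-jones-3} of the Jones polynomial, and finally absorb the sign $(-1)^{c(g)}$, which is identically $1$ on $\vec{T}$. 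The upshot in each case is that $V_g(t)$ agrees, up to a fixed positive scalar, with the positive-definite function $\langle g \rangle(A)$, whence $V_g(t)$ is itself of positive type. Third, for $t = e^{-i\pi/3}$ I would note, as in the $\vec{F}$ case, that the function is the complex conjugate of the value at $e^{i\pi/3}$, so no separate argument is needed.

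\medskip
\noindent\textbf{Where the difficulty lies.}
Almost all of the work has been front-loaded into this section: once the writhe vanishes on $\vec{T}$ and $(-1)^{c(g)} \equiv 1$ on $\vec{T}$, and once Theorem \ref{pos-def-kauff} is in hand, the four evaluations follow mechanically by the same computations already displayed for $\vec{F}$, and this is precisely why the theorem can be asserted ``basically by the same proofs.'' The one point that genuinely requires the $T$-machinery rather than a blind copy is the passage through the Kauffman bracket: the $\vec{F}$-proofs cited the $F$-version of the bracket theorem, whereas here the honest justification must route through Theorem \ref{pos-def-kauff}. I expect this substitution to be the only nontrivial bookkeeping step, and it is routine given that the multiplication rule and the glueing of the graphs $\Gamma^i$, $\Gamma^j$ in the proof of Theorem \ref{pos-def-kauff} are exactly what make the Gram-matrix argument valid on $T$. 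No new obstacle specific to the cyclic marking $c_-$ arises, since its effect is absorbed entirely into the graph isomorphism used in that proof.
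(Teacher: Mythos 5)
Your proposal is correct and follows exactly the route the paper takes: the paper's own proof is the single remark that the result follows ``basically by the same proofs as for $\vec{F}$,'' relying on the vanishing of the writhe on $\vec{T}$, the fact that $(-1)^{c(g)}\equiv 1$ on $\vec{T}$, and Theorem \ref{pos-def-kauff} in place of the $F$-version of the Kauffman bracket positivity. You have simply made explicit the substitutions the authors leave implicit, so there is nothing to add.
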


As in \cite{AiCo1}, it is possible to define the colouring function ${\rm Col}_Q(g)$ on $\vec{T}$. The above Theorem and the fact that ${\rm Col}_3(L)=3|V_{\vec{L}}(e^{i\pi/3})|^2$ then show that ${\rm Col}_3(g)$ is a function of positive type on $\vec{T}$.

\appendix
\section{Appendix}
In this appendix we compute all the possible oriented links coming from the graphs $\Gamma$ with $5$ vertices. 
The following graphs are all the possible $\Gamma_\pm-$graphs
\[\Gamma_1=\begin{tikzpicture}[x=0.6cm, y=0.6cm,
    every edge/.style={
        draw,
        postaction={decorate,
                    decoration={markings,mark=at position 0.5 with {\arrow{>}}}
                   }
        }
]
	\vertex (a) at (0,0) {};
	\vertex (b) at (1,0) {};
	\vertex (c) at (2,0) {};
	\vertex (d) at (3,0) {};
	\vertex (e) at (4,0) {};
	\path
		(a) edge[bend left=50] (b)
		(b) edge[bend left=50] (c)
		(c) edge[bend left=50] (d)
		(d) edge[bend left=50] (e)
;
\end{tikzpicture}
\qquad
\Gamma_2=\begin{tikzpicture}[x=0.6cm, y=0.6cm,
    every edge/.style={
        draw,
        postaction={decorate,
                    decoration={markings,mark=at position 0.5 with {\arrow{>}}}
                   }
        }
]
	\vertex (a) at (0,0) {};
	\vertex (b) at (1,0) {};
	\vertex (c) at (2,0) {};
	\vertex (d) at (3,0) {};
	\vertex (e) at (4,0) {};
	\path
		(a) edge[bend left=50] (b)
		(a) edge[bend left=50] (c)
		(c) edge[bend left=50] (d)
		(d) edge[bend left=50] (e)
;
\end{tikzpicture}
\qquad
\Gamma_3=\begin{tikzpicture}[x=0.6cm, y=0.6cm,
    every edge/.style={
        draw,
        postaction={decorate,
                    decoration={markings,mark=at position 0.5 with {\arrow{>}}}
                   }
        }
]
	\vertex (a) at (0,0) {};
	\vertex (b) at (1,0) {};
	\vertex (c) at (2,0) {};
	\vertex (d) at (3,0) {};
	\vertex (e) at (4,0) {};
	\path
		(a) edge[bend left=50] (b)
		(a) edge[bend left=50] (c)
		(c) edge[bend left=50] (d)
		(c) edge[bend left=50] (e)
;
\end{tikzpicture}\]
\[\Gamma_4=\begin{tikzpicture}[x=0.6cm, y=0.6cm,
    every edge/.style={
        draw,
        postaction={decorate,
                    decoration={markings,mark=at position 0.5 with {\arrow{>}}}
                   }
        }
]
	\vertex (a) at (0,0) {};
	\vertex (b) at (1,0) {};
	\vertex (c) at (2,0) {};
	\vertex (d) at (3,0) {};
	\vertex (e) at (4,0) {};
	\path
		(a) edge[bend left=50] (b)
		(a) edge[bend left=50] (c)
		(a) edge[bend left=50] (d)
		(d) edge[bend left=50] (e)
;
\end{tikzpicture}
\qquad
\Gamma_5=\begin{tikzpicture}[x=0.6cm, y=0.6cm,
    every edge/.style={
        draw,
        postaction={decorate,
                    decoration={markings,mark=at position 0.5 with {\arrow{>}}}
                   }
        }
]
	\vertex (a) at (0,0) {};
	\vertex (b) at (1,0) {};
	\vertex (c) at (2,0) {};
	\vertex (d) at (3,0) {};
	\vertex (e) at (4,0) {};
	\path
		(a) edge[bend left=50] (b)
		(a) edge[bend left=50] (c)
		(a) edge[bend left=50] (d)
		(a) edge[bend left=50] (e)
;
\end{tikzpicture}
\qquad
\Gamma_6=\begin{tikzpicture}[x=0.6cm, y=0.6cm,
    every edge/.style={
        draw,
        postaction={decorate,
                    decoration={markings,mark=at position 0.5 with {\arrow{>}}}
                   }
        }
]
	\vertex (a) at (0,0) {};
	\vertex (b) at (1,0) {};
	\vertex (c) at (2,0) {};
	\vertex (d) at (3,0) {};
	\vertex (e) at (4,0) {};
	\path
		(a) edge[bend left=50] (b)
		(b) edge[bend left=50] (c)
		(b) edge[bend left=50] (d)
		(d) edge[bend left=50] (e)
;
\end{tikzpicture}\]
\[\Gamma_7=\begin{tikzpicture}[x=0.6cm, y=0.6cm,
    every edge/.style={
        draw,
        postaction={decorate,
                    decoration={markings,mark=at position 0.5 with {\arrow{>}}}
                   }
        }
]
	\vertex (a) at (0,0) {};
	\vertex (b) at (1,0) {};
	\vertex (c) at (2,0) {};
	\vertex (d) at (3,0) {};
	\vertex (e) at (4,0) {};
	\path
		(a) edge[bend left=50] (b)
		(b) edge[bend left=50] (c)
		(b) edge[bend left=50] (d)
		(b) edge[bend left=50] (e)
;
\end{tikzpicture}
\qquad
\Gamma_8=\begin{tikzpicture}[x=0.6cm, y=0.6cm,
    every edge/.style={
        draw,
        postaction={decorate,
                    decoration={markings,mark=at position 0.5 with {\arrow{>}}}
                   }
        }
]
	\vertex (a) at (0,0) {};
	\vertex (b) at (1,0) {};
	\vertex (c) at (2,0) {};
	\vertex (d) at (3,0) {};
	\vertex (e) at (4,0) {};
	\path
		(a) edge[bend left=50] (b)
		(b) edge[bend left=50] (c)
		(c) edge[bend left=50] (d)
		(c) edge[bend left=50] (e)
;
\end{tikzpicture}
\qquad 
\Gamma_9=\begin{tikzpicture}[x=0.6cm, y=0.6cm,
    every edge/.style={
        draw,
        postaction={decorate,
                    decoration={markings,mark=at position 0.5 with {\arrow{>}}}
                   }
        }
]
	\vertex (a) at (0,0) {};
	\vertex (b) at (1,0) {};
	\vertex (c) at (2,0) {};
	\vertex (d) at (3,0) {};
	\vertex (e) at (4,0) {};
	\path
		(a) edge[bend left=50] (b)
		(b) edge[bend left=50] (c)
		(c) edge[bend left=50] (d)
		(b) edge[bend left=50] (e)
;
\end{tikzpicture}\]
\[
\Gamma_{10}=\begin{tikzpicture}[x=0.6cm, y=0.6cm,
    every edge/.style={
        draw,
        postaction={decorate,
                    decoration={markings,mark=at position 0.5 with {\arrow{>}}}
                   }
        }
]
	\vertex (a) at (0,0) {};
	\vertex (b) at (1,0) {};
	\vertex (c) at (2,0) {};
	\vertex (d) at (3,0) {};
	\vertex (e) at (4,0) {};
	\path
		(a) edge[bend left=50] (b)
		(b) edge[bend left=50] (c)
		(c) edge[bend left=50] (d)
		(a) edge[bend left=50] (e)
;
\end{tikzpicture}
\qquad 
\Gamma_{11}=\begin{tikzpicture}[x=0.6cm, y=0.6cm,
    every edge/.style={
        draw,
        postaction={decorate,
                    decoration={markings,mark=at position 0.5 with {\arrow{>}}}
                   }
        }
]
	\vertex (a) at (0,0) {};
	\vertex (b) at (1,0) {};
	\vertex (c) at (2,0) {};
	\vertex (d) at (3,0) {};
	\vertex (e) at (4,0) {};
	\path
		(a) edge[bend left=50] (b)
		(a) edge[bend left=50] (c)
		(c) edge[bend left=50] (d)
		(a) edge[bend left=50] (e)
;
\end{tikzpicture}
\qquad
\Gamma_{12}=\begin{tikzpicture}[x=0.6cm, y=0.6cm,
    every edge/.style={
        draw,
        postaction={decorate,
                    decoration={markings,mark=at position 0.5 with {\arrow{>}}}
                   }
        }
]
	\vertex (a) at (0,0) {};
	\vertex (b) at (1,0) {};
	\vertex (c) at (2,0) {};
	\vertex (d) at (3,0) {};
	\vertex (e) at (4,0) {};
	\path
		(a) edge[bend left=50] (b)
		(b) edge[bend left=50] (c)
		(a) edge[bend left=50] (d)
		(d) edge[bend left=50] (e)
;
\end{tikzpicture}\]
\[
\Gamma_{13}=\begin{tikzpicture}[x=0.6cm, y=0.6cm,
    every edge/.style={
        draw,
        postaction={decorate,
                    decoration={markings,mark=at position 0.5 with {\arrow{>}}}
                   }
        }
]
	\vertex (a) at (0,0) {};
	\vertex (b) at (1,0) {};
	\vertex (c) at (2,0) {};
	\vertex (d) at (3,0) {};
	\vertex (e) at (4,0) {};
	\path
		(a) edge[bend left=50] (b)
		(b) edge[bend left=50] (c)
		(a) edge[bend left=50] (d)
		(a) edge[bend left=50] (e)
;
\end{tikzpicture}
\qquad
\Gamma_{14}=\begin{tikzpicture}[x=0.6cm, y=0.6cm,
    every edge/.style={
        draw,
        postaction={decorate,
                    decoration={markings,mark=at position 0.5 with {\arrow{>}}}
                   }
        }
]
	\vertex (a) at (0,0) {};
	\vertex (b) at (1,0) {};
	\vertex (c) at (2,0) {};
	\vertex (d) at (3,0) {};
	\vertex (e) at (4,0) {};
	\path
		(a) edge[bend left=50] (b)
		(b) edge[bend left=50] (c)
		(b) edge[bend left=50] (d)
		(a) edge[bend left=50] (e)
;
\end{tikzpicture}
\]

If the pair $(\Gamma_i,\Gamma_j)$ is bipartite, we denote by $g_{ij}$ the corresponding element in $\vec{F}$. These are the links associated to the elements $g_{ij}\in \vec{F}$ 

\[\begin{tikzpicture}[every path/.style={thick}, every node/.style={transform shape, knot crossing, inner sep=1.5pt}]
	\node (a) at (0,0) {};
	\node (b) at (1,0) {};
	\node (c) at (2,0) {};
	\node (d) at (3,0) {};
	\node (e) at (0,-1) {};
	\node (f) at (1,-1) {};
	\node (g) at (2,-1) {};
	\node (h) at (3,-2) {};
	\node (x) at (-1.5,-1) {$\L(g_{1,9})= \  \ $};

\draw (a) .. controls (a.16 north west) and (e.16 south west) ..  (e);
\draw (a.center) .. controls (a.4 south west) and (e.4 north west) ..  (e.center);
\draw[<-] (a) .. controls (a.4 south east) and (e.4 north east) ..  (e);

\draw (b.center) .. controls (b.4 south west) and (f.4 north west) ..  (f.center);
\draw (b) .. controls (b.4 south east) and (f.4 north east) ..  (f);

\draw (c.center) .. controls (c.4 south west) and (g.4 north west) ..  (g.center);
\draw (c) .. controls (c.4 south east) and (g.4 north east) ..  (g);

\draw (d.center) .. controls (d.4 south west) and (g.4 south east) ..  (g.center);
\draw (d) .. controls (d.4 south east) and (h.4 north east) ..  (h);

\draw[->] (a.center) .. controls (a.4 north east) and (b.4 north west) ..  (b);
\draw[<-]  (b.center) .. controls (b.4 north east) and (c.4 north west) ..  (c);
\draw (c.center) .. controls (c.4 north east) and (d.4 north west) ..  (d);

\draw (d.center) .. controls (d.16 north east) and (h.16 south east) ..  (h.center);

\draw (e.center) .. controls (e.4 south east) and (h.4 south west) ..  (h);

\draw (h.center) .. controls (h.4 north west) and (f.4 south west) ..  (f);

\draw (f.center) .. controls (f.4 south east) and (g.4 south west) ..  (g);
\end{tikzpicture}
\qquad \qquad
\begin{tikzpicture}[every path/.style={thick}, every node/.style={transform shape, knot crossing, inner sep=1.5pt}]
	\node (a) at (0,0) {};
	\node (b) at (1,0) {};
	\node (c) at (2,0) {};
	\node (d) at (3,0) {};
	\node (e) at (0,-1) {};
	\node (f) at (1,-1) {};
	\node (g) at (2,-2) {};
	\node (h) at (3,-1) {};
	\node (x) at (-1.5,-0.5) {$\L(g_{1,12})= \ \ $};

	\node (ap) at (0,-2) {};

\draw (a) .. controls (a.16 north west) and (ap.8 west) .. (ap.center);
\draw (ap.center) .. controls (ap.8  east) and (g.8  west) ..  (g);

\draw (a.center) .. controls (a.4 south west) and (e.4 north west) ..  (e.center);
\draw[<-] (a) .. controls (a.4 south east) and (e.4 north east) ..  (e);

\draw (b.center) .. controls (b.4 south west) and (f.4 north west) ..  (f.center);
\draw (b) .. controls (b.4 south east) and (f.4 north east) ..  (f);


\draw (c) .. controls (c.4 south east) and (g.4 north east) ..  (g);
\draw (d) .. controls (d.4 south east) and (h.4 north east) ..  (h);

\draw[->]  (a.center) .. controls (a.4 north east) and (b.4 north west) ..  (b);
\draw (b.center) .. controls (b.4 north east) and (c.4 north west) ..  (c);
\draw (c.center) .. controls (c.4 north east) and (d.4 north west) ..  (d);

\draw[<-] (d.center) .. controls (d.16 north east) and (h.16 south east) ..  (h.center);

\draw (e) .. controls (e.8 south west) and (g.4 north west) ..  (g.center);

\draw (e.center) .. controls (e.4 south east) and (f.4 south west) ..  (f);

\draw (g.center) .. controls (g.4 south east) and (h.4 south west) ..  (h);

\draw (h.center) .. controls (h.4 north west) and (d.4 south west) ..  (d.center);

\draw (f.center) .. controls (f.4 south east) and (c.4 south west) ..  (c.center);
\end{tikzpicture}\] 
\[
\begin{tikzpicture}[every path/.style={thick}, every node/.style={transform shape, knot crossing, inner sep=1.5pt}]
	\node (a) at (0,0) {};
	\node (b) at (1,.5) {};
	\node (c) at (2,0) {};
	\node (d) at (3,0) {};
	\node (e) at (0,-1) {};
	\node (f) at (1,-1.5) {};
	\node (g) at (2,-1) {};
	\node (h) at (3,-2.25) {};
	\node (x) at (-1.5,-1) {$\L(g_{2,11})=$};

	\node (ap) at (-0.35,-1.5) {};

\draw[->] (a) .. controls (a.4 north west) and (b.4 south west) .. (b.center);
\draw (a.center) .. controls (a.4 south west) and (e.4 north west) ..  (e.center);
\draw[->] (a.center) .. controls (a.8 north east) and (e.8 south east) ..  (e.center);

\draw (a) .. controls (a.4 south east) and (e.4 north east) ..  (e);

\draw (b) .. controls (b.4 south east) and (f.4 north east) ..  (f);
\draw (b.center) .. controls (b.4 north east) and (c.4 north west) ..  (c);

\draw (c.center) .. controls (c.4 south west) and (g.4 north west) ..  (g.center);
\draw (c) .. controls (c.4 south east) and (g.4 north east) ..  (g);

\draw (c.center) .. controls (c.4 north east) and (d.4 north west) ..  (d);
\draw (d.center) .. controls (d.4 south west) and (g.4 south east) ..  (g.center);

\draw (d) .. controls (d.4 south east) and (h.4 north east) ..  (h);
\draw (d.center) .. controls (d.16 north east) and (h.16 south east) ..  (h.center);
\draw (h.center) .. controls (h.4 north west) and (f.4 south west) ..  (f);

\draw (f.center) .. controls (f.4 south east) and (g.4 south west) ..  (g);

\draw (e) .. controls (e.8 south west) and (f.4 north west) ..  (f.center);

\draw[->] (b) .. controls (b.16 north west) and (ap.8 north west) ..  (ap.center);
\draw (ap.center) .. controls (ap.8 south east) and (h.8 south west) ..  (h);

\end{tikzpicture}
\qquad \qquad
\begin{tikzpicture}[every path/.style={thick}, every node/.style={transform shape, knot crossing, inner sep=1.5pt}]
	\node (a) at (0,0) {};
	\node (b) at (1,0) {};
	\node (c) at (2,1) {};
	\node (d) at (3,0) {};
	\node (e) at (0,-1) {};
	\node (f) at (1,-1) {};
	\node (g) at (2,-1.5) {};
	\node (h) at (2.5,-2.25) {};
	\node (x) at (-1.5,-1) {$\L(g_{6,14})= \ \ $};

	\node (ap) at (-0.35,-1.5) {};

\draw (a.center) .. controls (a.4 north east) and (c.4 north west) .. (c);
\draw (a.center) .. controls (a.4 south west) and (e.4 north west) ..  (e.center);
\draw (a) .. controls (a.4 south east) and (e.4 north east) ..  (e);

\draw (b) .. controls (b.4 north west) and (c.4 south west) ..  (c.center);

\draw (b) .. controls (b.4 south east) and (f.4 north east) ..  (f);
\draw[->] (b.center) .. controls (b.4 south west) and (f.4 north west) ..  (f.center);
\draw (b.center) .. controls (b.16 north east) and (f.16 south east) ..  (f.center);

\draw (c.center) .. controls (c.8 north east) and (d.4 north west) ..  (d);
\draw (c) .. controls (c.4 south east) and (g.4 north east) ..  (g);

\draw (d.center) .. controls (d.4 south west) and (g.4 south east) ..  (g.center);

\draw (d) .. controls (d.4 south east) and (h.4 north east) ..  (h);
\draw (d.center) .. controls (d.16 north east) and (h.16 south east) ..  (h.center);
\draw (h.center) .. controls (h.8 north west) and (e.8 south west) ..  (e);

\draw (f) .. controls (f.4 south west) and (g.4 north west) ..  (g.center);

\draw[<-] (e.center) .. controls (e.8 south east) and (g.4 south west) ..  (g);
\draw[->] (a) .. controls (a.8 north west) and (ap.8 north west) ..  (ap.center);
\draw (ap.center) .. controls (ap.4 south east) and (h.8 south west) ..  (h);

\end{tikzpicture}\] 
\[
\begin{tikzpicture}[every path/.style={thick}, every node/.style={transform shape, knot crossing, inner sep=1.5pt}]
	\node (a) at (0,0) {};
	\node (b) at (1,0) {};
	\node (c) at (2,0) {};
	\node (d) at (3,.75) {};
	\node (e) at (0,-1) {};
	\node (f) at (1,-1) {};
	\node (g) at (2,-1) {};
	\node (h) at (3,-2) {};
	\node (x) at (-1.5,-1) {$\L(g_{8,10})= \ \ $};

	\node (ap) at (-0.35,-1.5) {};

\draw[->] (a.center) .. controls (a.4 north east) and (b.4 north west) .. (b);
\draw (a.center) .. controls (a.4 south west) and (e.4 north west) ..  (e.center);
\draw (a) .. controls (a.4 south east) and (e.4 north east) ..  (e);

\draw (b.center) .. controls (b.4 north east) and (d.4 north west) ..  (d);

\draw (b) .. controls (b.4 south east) and (f.4 north east) ..  (f);
\draw (b.center) .. controls (b.4 south west) and (f.4 north west) ..  (f.center);

\draw[->] (c.center) .. controls (c.16 north east) and (g.16 south east) ..  (g.center);
\draw (c) .. controls (c.4 south east) and (g.4 north east) ..  (g);
\draw (c.center) .. controls (c.4 south west) and (g.4 north west) ..  (g.center);
\draw (c) .. controls (c.4 north west) and (d.4 south west) ..  (d.center);

\draw (d.center) .. controls (d.16 north east) and (h.16 south east) ..  (h.center);
\draw (d) .. controls (d.4 south east) and (h.4 north east) ..  (h);

\draw (e.center) .. controls (e.4 south east) and (f.4 south west) ..  (f);
\draw (e) .. controls (e.8 south west) and (h.4 north west) ..  (h.center);
\draw[<-] (h) .. controls (h.8 south west) and (ap.8 south east) ..  (ap.center);
\draw (ap.center) .. controls (ap.8 north west) and (a.8 north west) ..  (a);

\draw (f.center) .. controls (f.4 south east) and (g.4 south west) ..  (g);

\end{tikzpicture}
\qquad \qquad
\begin{tikzpicture}[every path/.style={thick}, every node/.style={transform shape, knot crossing, inner sep=1.5pt}]
	\node (a) at (0,0) {};
	\node (b) at (1,0) {};
	\node (c) at (2,0) {};
	\node (d) at (3,.75) {};
	\node (e) at (0,-1) {};
	\node (f) at (1,-1) {};
	\node (g) at (2,-1.75) {};
	\node (h) at (3,-2.5) {};
	\node (x) at (-1.5,-1.5) {$\L(g_{8,13})=$};

	\node (ap) at (-0.35,-1.5) {};

\draw[->] (a.center) .. controls (a.4 north east) and (b.4 north west) .. (b);
\draw (a.center) .. controls (a.4 south west) and (e.4 north west) ..  (e.center);
\draw (a) .. controls (a.4 south east) and (e.4 north east) ..  (e);

\draw (b.center) .. controls (b.4 north east) and (d.4 north west) ..  (d);

\draw (b) .. controls (b.4 south east) and (f.4 north east) ..  (f);
\draw (b.center) .. controls (b.4 south west) and (f.4 north west) ..  (f.center);

\draw (c.center) .. controls (c.16 north east) and (g.16 south east) ..  (g.center);
\draw (c) .. controls (c.4 south east) and (g.4 north east) ..  (g);
\draw (c.center) .. controls (c.4 south west) and (f.4 south east) ..  (f.center);
\draw (c) .. controls (c.4 north west) and (d.4 south west) ..  (d.center);

\draw[->] (d.center) .. controls (d.16 north east) and (h.16 south east) ..  (h.center);
\draw (d) .. controls (d.4 south east) and (h.4 north east) ..  (h);

\draw (e.center) .. controls (e.4 south east) and (f.4 south west) ..  (f);
\draw (e) .. controls (e.4 south west) and (g.4 north west) ..  (g.center);
\draw[<-] (h) .. controls (h.8 south west) and (ap.8 south east) ..  (ap.center);
\draw (ap.center) .. controls (ap.8 north west) and (a.8 north west) ..  (a);

\draw (g) .. controls (g.4 south west) and (h.4 north west) ..  (h.center);

\end{tikzpicture}
\] 
\[
\begin{tikzpicture}[every path/.style={thick}, every node/.style={transform shape, knot crossing, inner sep=1.5pt}]
	\node (a) at (0,0) {};
	\node (b) at (1,0) {};
	\node (c) at (2,0) {};
	\node (d) at (3,1) {};
	\node (e) at (0,-1) {};
	\node (f) at (1,-1) {};
	\node (g) at (2,-1.75) {};
	\node (h) at (3,-1) {};
	\node (x) at (-1.5,0) {$\L(g_{9,12})=$};

	\node (ap) at (-0.35,-1.5) {};

\draw (a.center) .. controls (a.4 north east) and (d.4 north west) .. (d);
\draw (a.center) .. controls (a.4 south west) and (e.4 north west) ..  (e.center);
\draw (a) .. controls (a.4 south east) and (e.4 north east) ..  (e);

\draw (b.center) .. controls (b.4 north east) and (c.4 north west) ..  (c);
\draw (b) .. controls (b.4 south east) and (f.4 north east) ..  (f);
\draw (b.center) .. controls (b.4 south west) and (f.4 north west) ..  (f.center);
\draw (b) .. controls (b.4 north west) and (d.4 south west) ..  (d.center);

\draw (c.center) .. controls (c.4 north east) and (h.4 north west) ..  (h.center);
\draw (c) .. controls (c.4 south east) and (g.4 north east) ..  (g);
\draw (c.center) .. controls (c.4 south west) and (f.4 south east) ..  (f.center);

\draw (d.center) .. controls (d.16 north east) and (h.16 south east) ..  (h.center);
\draw (d) .. controls (d.4 south east) and (h.4 north east) ..  (h);

\draw (e.center) .. controls (e.4 south east) and (f.4 south west) ..  (f);

\draw (g.center) .. controls (g.4 south east) and (h.4 south west) ..  (h);
\draw (g.center) .. controls (g.4 north west) and (e.4 south west) ..  (e);
\draw[<-] (g) .. controls (g.4 south west) and (ap.4 south east) ..  (ap.center);
\draw (ap.center) .. controls (ap.8 north west) and (a.8 north west) ..  (a);


%
	\node (a) at (7,0) {};
	\node (b) at (8,0) {};
	\node (c) at (9,0) {};
	\node (d) at (10,.75) {};
	\node (e) at (7,-1) {};
	\node (f) at (8,-1) {};
	\node (g) at (9,-1.75) {};
	\node (h) at (10,-2.5) {};
	\node (x) at (5,0) {$\L(g_{10,13})=$};

	\node (ap) at (6.35,.5) {};
	\node (ap2) at (6.35,-1.5) {};

%
%

\draw[->] (a.center) .. controls (a.4 north east) and (b.4 north west) .. (b);
\draw (a.center) .. controls (a.4 south west) and (e.4 north west) ..  (e.center);
\draw (a) .. controls (a.4 south east) and (e.4 north east) ..  (e);
\draw (a) .. controls (a.8 north west) and (d.4 south west) ..  (d.center);

\draw (b.center) .. controls (b.4 north east) and (c.4 north west) ..  (c);
\draw (b) .. controls (b.4 south east) and (f.4 north east) ..  (f);
\draw (b.center) .. controls (b.4 south west) and (f.4 north west) ..  (f.center);

\draw (c.center) .. controls (c.16 north east) and (g.16 south east) ..  (g.center);
\draw (c) .. controls (c.4 south east) and (g.4 north east) ..  (g);
\draw (c.center) .. controls (c.4 south west) and (f.4 south east) ..  (f.center);

\draw (e.center) .. controls (e.4 south east) and (f.4 south west) ..  (f);

\draw[->] (d.center) .. controls (d.16 north east) and (h.16 south east) ..  (h.center);
\draw (d) .. controls (d.4 south east) and (h.4 north east) ..  (h);

\draw (e.center) .. controls (e.4 south east) and (f.4 south west) ..  (f);

\draw (g.center) .. controls (g.4 north west) and (e.8 south west) ..  (e);
\draw (g) .. controls (g.4 south west) and (h.4 north west) ..  (h.center);

\draw[<-] (h) .. controls (h.8 south west) and (ap2.8 south east) ..  (ap2.center);
\draw (ap2.center) .. controls (ap2.8 north west) and (ap.8 south west) ..  (ap.center);
\draw (ap.center) .. controls (ap.8 north east) and (d.16 north west) ..  (d);


\end{tikzpicture}
\]

The matrix $(c(g_{ij}))_{i,j=1,\ldots, 14}$ is clearly symmetric (see Proposition 2.4, \cite{AiCo1}) and has the following entries
\begin{itemize}
\item $c(g_{1,9})=c(g_{1,12})=c(g_{2,11})=c(g_{6,14})=c(g_{8,10})=c(g_{8,13})=c(g_{13,10})=-2$;
\item $c(g_{9,12})=-4$.
\end{itemize}

\bigskip
{\parindent=0pt 

\smallskip Valeriano Aiello\\ 
Section de Math\'ematiques \\
Universit\'e de Gen\`eve \\
2-4 rue du Li\`evre, Case Postale 64, 1211 Gen\`eve 4, Switzerland\\
E-mail: valerianoaiello@gmail.com\\

\smallskip \noindent
Roberto Conti\\
Dipartimento di Scienze di Base e Applicate per l'Ingegneria \\
Sapienza Universit\`a di Roma \\
Via A. Scarpa 16, I-00161 Roma, Italy
\\ E-mail: roberto.conti@sbai.uniroma1.it
\par}


\begin{thebibliography}{99}

\bibitem{AiCo1} V. Aiello, R. Conti, \emph{Graph polynomials and link invariants as positive type functions on Thompson's group $F$}, accepted for publication in J. Knot Theory Ramifications. preprint arXiv:1510.04428 (2015).
\bibitem{ACJ} V. Aiello, R. Conti, V. F. R. Jones, \emph{The Homflypt polynomial and the oriented Thompson group}, Quantum Topol. 9 (2018), 461-472. preprint arXiv:1609.02484
 \bibitem{Belk} J. Belk, \emph{Thompson's group F}, Ph.D. thesis, Cornell University, preprint arXiv:0708.3609 
\bibitem{Bhatia} R. Bhatia,  \emph{Positive definite matrices}, Princeton University Press, 2009.
\bibitem{Bollobas} B. Bollobas, \emph{Modern graph theory}, Vol. 184. Springer Science and Business Media, 1998.
\bibitem{Cannon} J. W. Cannon, W. J. Floyd, W. R. Parry, \emph{Introductory notes on Richard Thompson's groups}, Enseign. Math. 42 (1996), 215-256.
\bibitem{F} P. Freyd, D. Yetter, J. Hoste, W. B. R. Lickorish, K. Millet, A. Ocneanu, \emph{A new polynomial invariant of knots and links}, Bull. Amer. Math. Soc. 12.2 (1985), 239-246.
\bibitem{Godsil} C. Godsil, F. G. Royle, \emph{Algebraic graph theory}, Vol. 207. Springer Science and Business Media, 2013.
\bibitem{Golan} G. Golan, M. Sapir, \emph{On Jones' subgroup of R. Thompson group $F$},  Journal of Algebra 470 (2017), 122-159.
\bibitem{DLH-J} P. de la Harpe,  V. F. R. Jones, \emph{Graph invariants related to statistical mechanical models: examples and problems}, J. Combin. Theory, Series B 57 (1993), 207-227.
\bibitem{encycl} M. Hazewinkel (Ed.), \emph{Encyclopaedia of Mathematics}, Vol. 3 (D-Fey), Kluwer, 1989 (ISBN 1-55608-002-6).
\bibitem{Jae} F. Jaeger, \emph{Plane graphs and link invariants}, Discrete Math. 114 (1993), 253-264.
\bibitem{JovN} V. F. R. Jones, \emph{A polynomial invariant for knots via von Neumann algebras},
Bull. Amer. Math. Soc. 12 (1985), 103-111.
\bibitem{JoHecke} V. F. R. Jones, \emph{Hecke algebra representations of braid groups and link polynomials}, Ann. of Math. (1987), 335-388.
\bibitem{JoStat} V. F. R. Jones, \emph{On knot invariants related to some statistical mechanical models}, Pacific J. Math. 137.2 (1989), 311-334.
\bibitem{Jo} V. F. R. Jones, \emph{Some unitary representations of Thompson's groups F and T}, J. Comb. Algebra 1 (2017), 1--44. preprint arXiv:1412.7740 
\bibitem{Jo-No} V. F. R. Jones, \emph{A no-go theorem for the continuum limit of a periodic quantum spin chain}, Commun. Math. Phys. 357 (2018): 295--317. preprint arXiv:1607.08769  
\bibitem{Kauffman} L. H. Kauffman, \emph{State models and the Jones polynomial}, Topology 26 (1987), 395-407.
\bibitem{Kawa} A. Kawauchi, \emph{A survey of knot theory}, Birkh\"auser, 2012.
\bibitem{Lic}  W. B. R. Lickorish, \emph{An introduction to knot theory}, GTM 175, Springer-Verlag, New York, 1997.
\bibitem{Lip} A. Lipson, \emph{Some more states models for link invariants}, Pacific J. Math. 152.2 (1992), 337-346.
\bibitem{Mura} K. Murasugi,  \emph{Knot theory and its applications}, Springer Science and Business Media, 2007.
\bibitem{Oht} T. Ohtsuki, \emph{Problems on invariants of knots and 3-manifolds}, Geometry and Topology Monographs 4 (2002), 377-572.
\bibitem{P} J. H. Przytycki, \emph{3-coloring and other elementary invariants of knots}, 
in Knot theory (Warsaw, 1995), 275-295, Banach Center Publ., 42, Polish Acad. Sci., Warsaw, 1998.
\bibitem{Tur} V. G. Turaev, \emph{The Yang-Baxter equation and invariants for links}, Invent. Math. 92 (1988), 527-553.
\bibitem{Welsh} D. J. A. Welsh, C. Merino. \emph{The Potts model and the Tutte polynomial}, J. Math. Phys. 41 (2000), 1127-1152.


\end{thebibliography}
\end{document}